\definecolor{LightGray}{rgb}{.6,.6,.6}
\definecolor{darkred}{rgb}{0.5,0,0}
\definecolor{darkgreen}{rgb}{0, 0.3,0}
\definecolor{darkblue}{rgb}{0,0,0.6}
\theoremstyle{plain}
\newtheorem{proposition}{Proposition}[section]
\newtheorem{lemma}[proposition]{Lemma}
\newtheorem{corollary}[proposition]{Corollary}
\newtheorem{theorem}[proposition]{Theorem}
\theoremstyle{definition}
\newtheorem{definition}[proposition]{Definition}
\theoremstyle{remark}
\newtheorem{construction}[proposition]{Construction}
\newcommand{\llrr}[1]{{\llbracket #1 \rrbracket}}
\newcommand{\bigllrr}[1]{{\bigl \llbracket #1 \bigr \rrbracket}}
\newcommand{\QU}{{\mathbb{Q}\mathbb{U}}}
\newcommand{\plusdot}{{\reflectbox{\rotatebox[origin=c]{180}{$\dotplus$}}}}
\newcommand{\defn}[1]{{\bf{#1}}}
\newcommand{\half}{{\frac{1}{2}}}
\newcommand{\Ohm}{{\mathrm \Omega}}
\newcommand{\Lwow}{\ensuremath{\cL_{\omega_1,\omega}}}
\newcommand{\Lww}{\ensuremath{\cL_{\omega,\omega}}}
\def\Str{{\mathrm{Str}}}
\newcommand{\Model}{\ensuremath{{\Str_L}}}
\newcommand{\Fr}{Fra\"iss\'e}
\DeclareMathOperator{\Sub}{Clo}
\newcommand{\st}{{\ :\ }}
\newcommand{\cF}{{\mathcal{F}}}
\newcommand{\cR}{{\mathcal{R}}}
\newcommand{\cC}{{\mathcal{C}}}
\newcommand{\ccD}{{\mathcal{D}}}
\renewcommand{\Pr}{{\mathbb{P}}}
\newcommand{\M}{{\mathcal{M}}}
\newcommand{\cM}{{\mathcal{M}}}
\newcommand{\cK}{{\mathcal{K}}}
\newcommand{\XX}{{\mathcal{N}}}
\newcommand{\cN}{{\mathcal{N}}}
\newcommand{\cA}{{\mathcal{A}}}
\newcommand{\Abar}{{\overline{\mathcal{A}}}}
\newcommand{\Mbar}{{\overline{\mathcal{M}}}}
\newcommand{\Nbar}{{\overline{\mathcal{N}}}}
\newcommand{\cB}{{\mathcal{B}}}
\newcommand{\Bbar}{{\overline{\mathcal{B}}}}
\newcommand{\cL}{{\mathcal{L}}}
\newcommand{\PP}{{\mathcal{P}}}
\newcommand{\PPbar}{{\overline{\PP}}}
\newcommand{\QQ}{{\mathcal{Q}}}
\newcommand{\T}{{\mathcal{T}}}
\newcommand{\F}{{\mathcal{F}}}
\newcommand{\B}{{\mathcal{B}}}
\newcommand{\X}{{\mathcal{X}}}
\newcommand{\G}{{\mathcal{G}}}
\newcommand{\GG}{{\mathbb{G}}}
\renewcommand{\L}{\\L}
\newcommand{\pt}{p_{2i+1}}
\renewcommand{\aa}{{\mathbf{a}}}
\renewcommand{\AA}{{\mathbf{A}}}
\newcommand{\bb}{{\mathbf{b}}}
\newcommand{\cc}{{\mathbf{c}}}
\newcommand{\dd}{{\mathbf{d}}}
\newcommand{\mm}{{\mathbf{m}}}
\newcommand{\rr}{{\mathbf{r}}}
\newcommand{\vv}{{\mathbf{v}}}
\newcommand{\xx}{{\mathbf{x}}}
\newcommand{\uu}{{\mathbf{u}}}
\newcommand{\yy}{{\mathbf{y}}}
\newcommand{\zz}{{\mathbf{z}}}
\newcommand{\ww}{{\mathbf{w}}}
\renewcommand{\And}{\wedge}
\newcommand{\Or}{\vee}
\newcommand{\Naturals}{\mathbb{N}}
\newcommand{\Nats}{\Naturals}
\newcommand{\defas}{:=}
\DeclareMathOperator{\Aut}{Aut}
\DeclareMathOperator{\dcl}{dcl}
\DeclareMathOperator{\acl}{acl}
\newcommand{\sym}{{S_\infty}}
\newcommand{\Rado}{{\mathscr{R}}}
\newcommand{\Henson}{{\mathscr{H}_3}}
\newcommand{\Reals}{\ensuremath{\mathbb{R}}}
\newcommand{\Rationals}{\ensuremath{\mathbb{Q}}}
\newcommand{\w}{\ensuremath{\omega}}
\begin{document}

\title{Invariant measures concentrated \linebreak on countable structures}

\author[Ackerman]{Nathanael Ackerman}
\address{
Department of Mathematics\\
Harvard University\\
One Oxford Street\\
Cambridge, MA 02138}
\email{nate@math.harvard.edu}
\thanks{}

\author[Freer]{Cameron Freer}
\address{
Computer Science and Artificial Intelligence Laboratory\\
Massachusetts Institute of Technology\\
32 Vassar Street\\
Cambridge, MA 02139
}
\email{freer@mit.edu}
\thanks{}

\author[Patel]{Rehana Patel}
\address{
Franklin W.\ Olin College of Engineering\\
1000 Olin Way\\
Needham, MA 02492
}
\email{rehana.patel@olin.edu}
\thanks{}

{\let\thefootnote\relax\footnotetext{
2010~{\it Mathematics Subject Classification.}
Primary:
03C98;
Secondary:
60G09,
37L40,
05C80,
03C75,
62E10,
05C63.
{\it Keywords:} 
Invariant measure,
exchangeability,
infinitary logic,
trivial definable closure,
strong amalgamation,
Scott sentence,
graph limits.
}

\maketitle

\thispagestyle{empty}

\vspace*{-20pt}


\begin{abstract}
Let $L$ be a countable language. We say that a countable infinite $L$-structure
$\cM$ admits an invariant measure when there is a probability measure on
the space of $L$-structures with the same underlying set as $\cM$ that is invariant
under permutations of that set, and that assigns measure one to the
isomorphism class of $\cM$. We show that $\cM$ admits an invariant measure if and
only if it has trivial definable closure, i.e., the pointwise stabilizer in
$\Aut(\cM)$ of an arbitrary finite tuple of $\cM$ fixes no additional points. 
When $\cM$ is a \Fr\ limit in a relational language, this amounts to requiring that the age of $\cM$ have strong amalgamation.
Our results give rise to new instances of structures that admit invariant
measures and structures that do not. 
\end{abstract}


\renewcommand\contentsname{\!\!\!\!}
\setcounter{tocdepth}{2}
\vspace*{5pt}
{\scriptsize
\tableofcontents
}
\ \\
\newpage

\section{Introduction}
\label{intro-sec}

Randomness is used to construct objects throughout mathematics, and structures resulting from symmetric random constructions often exhibit structural regularities.  Here we characterize, in terms of a combinatorial criterion, those countable structures in a countable language that can be built via a random construction that is invariant
under
reorderings of the elements.

A probabilistic construction is \emph{exchangeable} when its distribution satisfies the symmetry condition of being invariant 
under
permutations of its elements.  When an exchangeable construction almost surely produces a single structure (up to isomorphism), we say that the structure \emph{admits an invariant measure}.  Such structures often exhibit regularity properties such as universality or ultrahomogeneity.
Two of the 
most important
randomly constructed structures 
with these regularities
are Rado's countable universal ultrahomogeneous graph and Urysohn's universal separable ultrahomogeneous metric space.  The Rado graph may be generated as a random graph by independently choosing edges according to the Erd{\H{o}}s--R\'enyi construction \cite{MR0120167}, and Urysohn space arises 
via (the completion of) an exchangeable countable metric space, by a construction of
Vershik 
\cite{MR2006015}, \cite{MR2086637}.

Because these examples have such rich internal structure, it is natural to ask 
which
other objects admit invariant measures.  
One formulation of this question was posed by Cameron in \cite[\S4.10]{MR1066691}.
Petrov and Vershik \cite{MR2724668} have recently shown, using
a new type of construction, 
that the countable universal ultrahomogeneous $K_n$-free graphs all admit invariant measures.  In the present work, we combine methods from the model theory of infinitary logic with ideas from Petrov and Vershik's construction to give a complete characterization of those countable infinite structures in a countable language that admit invariant measures. Specifically, we show that a structure $\cM$ admits an invariant measure if and only if the pointwise stabilizer in $\Aut(\cM)$ of any finite set of elements of $\cM$ fixes no additional elements, a condition known as having \emph{trivial definable closure}.

Many  natural examples of objects admitting invariant measures are generic, in the sense of being \Fr\ limits, i.e., the countable universal ultrahomogeneous object for some class of finite structures \cite[\S7.1]{MR1221741}.  One may ask what additional regularity properties must hold of \Fr\ limits that admit invariant measures.  \Fr\ limits arise from amalgamation procedures for ``gluing together'' finite substructures.  Our result implies that a \Fr\ limit in a countable relational language admits an invariant measure if and only if it has \emph{strong amalgamation}, a natural restriction on the gluing procedure that produces the limit.

Our characterization gives rise to new 
instances
of structures 
that admit
invariant measures, and 
structures
that do not.  We apply our results to existing classifications of ultrahomogeneous graphs, directed graphs, and partial orders, as well as other combinatorial structures, thereby providing several new examples of exchangeable constructions that lead to generic structures.  
Among 
those structures
for which we provide the first such constructions are
the countable universal ultrahomogeneous partial order \cite{MR544855} and
certain countable universal graphs forbidding a finite homomorphism-closed
set of finite connected graphs \cite{MR1683298}.
Structures for which our results imply the non-existence of such constructions
include Hall's countable universal group \cite[\S7.1, Example~1]{MR1221741}, 
and 
the existentially complete countable universal bowtie-free graph \cite{MR1675931}.

\subsection{Background}\ \\
\label{background}
\indent The Rado graph $\Rado$, sometimes known as the ``random graph'', is
(up to isomorphism) the unique countable universal ultrahomogeneous graph
\cite{MR0172268}.  It is the Fra\"iss\'e limit of the class of finite
graphs, with a first-order theory characterized by so-called ``extension
axioms" that have a simple syntactic form.  It is  also the classic example
of a countable structure that has a symmetric probabilistic construction,
namely, the countably infinite version of the Erd{\H{o}}s--R\'enyi random
graph process introduced by Gilbert \cite{MR0108839} and Erd{\H{o}}s and
R\'enyi \cite{MR0120167}.  For $0<p<1$, this process determines a random
graph on a countably infinite set of vertices by independently flipping a
coin of weight $p$ for every pair of distinct vertices, and adding an edge
between those vertices precisely when the coin flip comes up heads.  Denote
this random variable by $\GG(\Nats, p)$.  The random graph $\GG(\Nats,p)$ is
almost surely  isomorphic to $\Rado$, for any $p$ such that $0< p <1$.
Moreover, each $\GG(\Nats, p)$ is \emph{exchangeable}, i.e., its distribution is invariant under arbitrary permutations of the vertices, and so there are continuum-many different invariant measures concentrated on $\Rado$ (up to isomorphism).  It is natural to ask which other structures admit random constructions that are invariant in this way.

Consider the Henson graph $\Henson$, the unique (up to isomorphism) countable universal ultrahomogeneous triangle-free graph \cite{MR0304242}.  Like the Rado graph, it has a first-order theory consisting of extension axioms, and can be constructed as a \Fr\ limit. Does it also admit an invariant measure?  In contrast with $\Rado$, no countable random graph whose distribution of edges is independent and identically distributed (i.i.d.)\ can be almost surely isomorphic to $\Henson$.  But this does not rule out the possibility of an exchangeable random graph almost surely isomorphic to $\Henson$. Its distribution would constitute a measure on countable graphs, invariant under arbitrary permutations of the underlying vertex set, that is concentrated on 
the isomorphism class of
$\Henson$.

One might consider building an invariant measure concentrated on 
(graphs isomorphic to)
$\Henson$
by ``approximation
from below'' using uniform measures on finite
triangle-free graphs, by analogy with the invariant measure concentrated on
$\Rado$ obtained as the weak limit of uniform measures on finite graphs.
The distribution of finite Erd{\H{o}}s--R\'enyi random graphs $\GG(n, \half)$
is simply the uniform measure on graphs with $n$ labeled vertices; the
sequence $\GG(n,\half)$ converges in distribution to $\GG(\Nats, \half)$, which is almost surely isomorphic to $\Rado$.  So to obtain an invariant measure concentrated on $\Henson$, one might similarly consider the weak limit of  the sequence of uniform measures on finite triangle-free labeled graphs of size $n$, i.e., of the distributions of the random graphs $\GG(n,\half)$ conditioned on being triangle-free.  However, by work of Erd{\H{o}}s, Kleitman, and Rothschild \cite{MR0463020} and Kolaitis, Pr\"omel, and Rothschild \cite{MR902790}, this sequence is asymptotically almost surely bipartite, and so its weak limit is almost surely not isomorphic to $\Henson$. Hence, as noted in \cite{MR2724668}, this particular approach does not produce an invariant measure concentrated on $\Henson$.

Petrov and Vershik \cite{MR2724668} provided the first instance of an invariant measure concentrated on the Henson graph $\Henson$ (up to isomorphism); they also did likewise for Henson's other countable universal ultrahomogeneous $K_n$-free graphs, where $n > 3$.  They produced this measure via a ``top down" construction, building a continuum-sized triangle-free graph in such a way that an i.i.d.\ sample from its vertex set induces an exchangeable random graph that is almost surely isomorphic to $\Henson$.

In this paper, we address the question of invariant measures concentrated on \emph{arbitrary} structures. Given a countable language $L$ and a countable infinite $L$-structure $\cM$, we ask whether  there exists a probability measure 
on the space of $L$-structures with the same underlying set as $\cM$,
invariant under arbitrary permutations of 
that set, assigning
measure one to the isomorphism class of $\cM$.  We provide a complete answer to this question, by characterizing such $L$-structures $\cM$ as precisely those that have trivial group-theoretic definable closure, i.e.,  those structures $\cM$ for which the pointwise stabilizer  in Aut($\cM$) of any finite tuple $\aa$ from $\cM$ fixes no elements of $\cM$ except those in $\aa$.  We use infinitary logic  to establish a setting in which, whenever $\cM$ has trivial definable closure, we can
construct continuum-sized objects that upon sampling 
produce
invariant measures concentrated on $\cM$.  When $\cM$ does not have trivial definable closure, we show that such invariant measures cannot exist.

Our results build on several ideas from \cite{MR2724668}. In particular, Petrov
and Vershik show that if a continuum-sized graph satisfies certain
properties, then sampling from 
it
produces an invariant measure
concentrated on $\Henson$ (and similar results for $\Rado$ and the other Henson $K_n$-free graphs);
they then proceed to construct such continuum-sized graphs.
We identify a certain type of continuum-sized structure 
whose existence guarantees,
using
a similar sampling procedure, an invariant measure concentrated on a target countable structure; we then construct such a continuum-sized structure whenever the target structure has trivial definable closure.

Underlying Petrov and Vershik's 
construction
of invariant measures,
as well as ours,
is
the characterization of countable exchangeable (hyper)graphs as those obtained via
certain sampling procedures from continuum-sized structures. These ideas
were developed by
Aldous \cite{MR637937}, Hoover \cite{Hoover79}, Kallenberg \cite{MR1182678} 
and Vershik \cite{MR1922015}  in work on the probability theory of
exchangeable arrays.
More recently, similar machinery
has come to prominence in the combinatorial theory of limits of dense
graphs via \emph{graphons}, due to Lov\'asz and Szegedy \cite{MR2274085}
and others.  For an equivalence between these characterizations, see Austin
\cite{MR2426176} and Diaconis and Janson \cite{MR2463439}. 
The \emph{standard recipe} of \cite{MR2426176} provides a more general formulation of the correspondence between sampling procedures on continuum-sized objects and arbitrary countable exchangeable structures.

In the present paper, we are interested primarily in determining those countable infinite structures for which there exists at least one invariant measure concentrated on its isomorphism class.  In the case of countable graphs, our construction in fact provides a new method for building graphons.  In particular, the graphons we build are \emph{random-free}, in the sense of 
\cite[\S10]{MR3043217}.  Therefore our construction shows that whenever there is an invariant measure concentrated on the isomorphism class of a countable graph, there is such a measure that comes from sampling a random-free graphon.

Within mathematical logic, the study of invariant measures on countable
first-order structures goes back to work of Gaifman \cite{MR0175755}, Scott and Krauss \cite{ScottKrauss}, and 
Krauss \cite{MR0275482}. For a discussion of this earlier history and
its relationship to Hoover's work on exchangeability, see Austin
\cite[\S3.8 and \S4.3]{MR2426176}.
Our countable relational setting is akin to that explored more
recently in extremal combinatorics by Razborov \cite{MR2371204}; for
details see
\cite[\S4.3]{MR2426176} and \cite{2008arXiv0801.1538A}.

Other work in model theory has examined aspects of
probabilistic constructions.
Droste and Kuske \cite{MR1979531} and Dolinka and Ma{\v{s}}ulovi{\'c}
\cite{MR2891379} describe probabilistic
constructions of countable infinite structures, though without invariance.
Usvyatsov \cite{MR2435152} has also considered a relationship between invariant measures and notions of genericity in the setting of continuous first-order logic, especially with respect to Urysohn space.

\subsection{Main results}\ \\
\indent Our main theorem characterizes countable infinite structures $\cM$ that admit invariant measures as those for which the pointwise stabilizer, in $\Aut(\cM)$, of an arbitrary finite tuple of $\cM$ fixes no additional points.  For a countable language $L$, let $\Model$ be the Borel measure space of $L$-structures with underlying set $\Naturals$. 
(This is a standard space on which to consider measures invariant under the action of the permutation group $\sym$; we
provide details in \S\ref{logicaction}.) Then we have the following result.

\begin{theorem}\label{maintheorem}
Let $L$ be a countable language, and let $\cM$ be a countable infinite
$L$-structure.
The following are equivalent:
\begin{itemize}
\item[(1)]
There is a probability measure on
$\Model$, invariant under the natural action of
$\sym$ on $\Model$,
that is concentrated on 
the set of elements of $\Model$ that are isomorphic to
$\cM$.
\item[(2)]
The structure $\cM$
has trivial
group-theoretic definable closure, 
i.e.,
for every finite tuple $\aa\in\cM$, we have
$\dcl_{\cM}(\aa)= \aa$, where
$\dcl_{\cM}(\aa)$ is the collection of elements of $\cM$ that are fixed by
all automorphisms of $\cM$ fixing $\aa$ pointwise.
\end{itemize}
\end{theorem}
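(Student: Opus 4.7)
The theorem splits into two directions handled by very different methods.

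Direction $(1)\Rightarrow(2)$: I would argue by contradiction. Suppose some finite tuple $\aa\in\cM^n$ has an element $b\in\dcl_\cM(\aa)\setminus\aa$, and let $\mu$ be an $\sym$-invariant probability measure on $\Model$ concentrated on the isomorphism class of $\cM$. Because $b\in\dcl_\cM(\aa)$, for every $\cN\cong\cM$ and every $\aa'\in\cN^n$ realizing the (infinitary) type of $\aa$ there is a canonical element $F(\aa')\in\cN\setminus\aa'$ realizing $\text{tp}^\cM(b/\aa)$ over $\aa'$. Fix $\aa_0\in\Naturals^n$ and consider the event $R_{\aa_0}=\{\aa_0\text{ realizes }\text{tp}(\aa) \text{ in } \cN\}$. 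A short exchangeability argument shows $\mu(R_{\aa_0})>0$: otherwise, by exchangeability all $R_{\aa_0}$ have equal $\mu$-measure zero, so a countable union bound forces $\cN$ to almost surely contain no realization of $\text{tp}(\aa)$, contradicting the fact that $\cN\cong\cM$ $\mu$-almost surely. Conditional on $R_{\aa_0}$, the random element $F(\aa_0)$ lies in $\Naturals\setminus\aa_0$, and by the $\mu$-invariance under permutations of $\Naturals$ fixing $\aa_0$ pointwise, its conditional distribution is invariant under $\symgroup(\Naturals\setminus\aa_0)$; since no probability measure on a countably infinite set is invariant under its full symmetric group, this is a contradiction.

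Direction $(2)\Rightarrow(1)$: This is the substantive direction, and I would generalize the Petrov--Vershik construction \cite{MR2724668} using infinitary logic. First, invoke Scott's theorem to attach to $\cM$ an $\Liw$-sentence $\phi$ whose countable models are exactly the isomorphic copies of $\cM$; producing $\mu$ then reduces to producing an invariant measure on $\Model$ concentrated on the Borel set $\{\cN\in\Model:\cN\models\phi\}$. The plan is to build a continuum-sized ``sampling'' object --- a Borel $L$-structure carried on a standard Borel probability space such as $([0,1],\text{Lebesgue})$ --- so that the random $\cN\in\Model$ obtained by drawing an i.i.d.\ sequence $(X_n)_{n\in\Naturals}$ of points in $[0,1]$ and pulling back the Borel relations almost surely satisfies $\phi$. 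Exchangeability of $\cN$ is immediate from the i.i.d.\ sample, so the real work is to arrange that each of the countably many infinitary ``extension axioms'' implicit in $\phi$ holds $\mu$-almost surely: for every finite Scott approximation $s$ of a type realized in $\cM$ and every admissible one-point extension type $t$ of $s$, every $s$-configuration in $\cN$ must admit an extension by a new point realizing $t$. The sampling object would be built by a stratified approximation scheme, partitioning $[0,1]$ at each finite stage into Borel pieces labelled by finite Scott types and refining the partition at each new stage so that every admissible extension type reserves a positive-measure sub-piece of $[0,1]$ for its new realizations.

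The key point --- and what I expect to be the main obstacle --- is the role of trivial definable closure in the one-point extension step. Precisely because $b\notin\dcl_\cM(\aa)$ for every $b\notin\aa$, every extension type of any finite configuration in $\cM$ admits infinitely many distinct realizations, so at each stage of the stratified partition one can genuinely reserve a fresh positive-measure slice of $[0,1]$ for each required extension without colliding with the slices already assigned to other extensions; this is the measure-theoretic analogue, in our infinitary setting, of the strong amalgamation property mentioned in the abstract. Executing this uniformly across all countable levels of the Scott analysis, maintaining Borel measurability throughout, and verifying that the final sampled object satisfies $\phi$ almost surely (rather than merely being consistent with each finite approximation of $\phi$) is the delicate technical content of the argument, and it is where the infinitary-logic framework and the trivial-dcl hypothesis are jointly essential.
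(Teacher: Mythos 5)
Your proposal follows the same two-pronged strategy as the paper. For $(1)\Rightarrow(2)$, your conditional-invariance argument is the paper's proof of Theorem~\ref{Conditions} in lightly disguised form: the paper shows $\mu(\widehat{p}(0\cdots j))=\alpha>0$ and $\mu(\widehat{q}(0\cdots jk))=\beta>0$ for all $k>j$, and that the events $\widehat{q}(0\cdots jk)$ are pairwise disjoint because $b\in\dcl_\cM(\aa)$, so $\alpha=\sum_{k>j}\beta$ is impossible---which is exactly your observation that the conditional law of the definable extension would be a $\symgroup(\Naturals\setminus\aa_0)$-invariant probability measure on a countably infinite set. For $(2)\Rightarrow(1)$ you correctly identify the paper's plan---Scott sentence, continuum-sized Borel sampling structure, stratified interval partition, trivial dcl supplying fresh positive-measure slices of witnesses---but leave the execution acknowledged as open; the paper's actual technical contribution there is the pithy $\Pi_2$ expansion $T_A$ of $\{\sigma_\cM\}$ in an enlarged language $L_A$, the intermediate Lemma~\ref{alephnought} that trivial dcl yields \emph{duplication of quantifier-free types}, and the inductive refinement/enlargement construction (Construction~\ref{themainconstruction}) of a Borel $L_A$-structure strongly witnessing $T_A$, from which the invariant measure on $\Model$ is obtained by projection. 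These are implementation details of the very plan you sketch, not a different route.
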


Note that every finite structure admits a natural probability measure that
is invariant under permutations of the underlying set. But also every
finite structure has nontrivial definable closure, and so the statement of
this theorem does not extend to finite structures.

Our main result is the equivalence of (1) and (2); 
but further,
an observation of Kechris and Marks shows the 
additional
equivalence
with 
(3) 
in Theorem~\ref{maintheoremthree}
below.

For any structure $\cN \in \Model$, we write $\Aut(\cN)$ for its automorphism group considered as a subgroup of $\sym$,
and take the action of $\Aut(\cN)$ on $\Model$ to be that given by the restriction of the natural action of $\sym$.

\begin{theorem}
\label{maintheoremthree}
Properties
{\rm(1)} and {\rm (2)} from
Theorem~\ref{maintheorem} are also equivalent to the following:
\begin{itemize}
\item[(3)] 
There is some $\cN\in\Model$ that has trivial group-theoretic definable closure and is such that
there is an
$\Aut(\cN)$-invariant
probability measure on
$\Model$
concentrated on the 
set of elements of $\Model$ that are isomorphic to
$\cM$.
\end{itemize}
\end{theorem}

Note that (3) is ostensibly weaker than (1), as 
in general
an $\Aut(\cN)$-invariant measure need not be $\sym$-invariant.

A structure $\cM$ is said to be \emph{ultrahomogeneous} when every partial isomorphism between finitely generated substructures of $\cM$ extends to an automorphism of $\cM$.  Define the \emph{age} of a countable $L$-structure $\cM$ to be the class of all finitely generated $L$-structures that are isomorphic to a substructure of $\cM$.  The age of any countable infinite ultrahomogeneous $L$-structure has the so-called \emph{amalgamation property}, which stipulates that any two structures in the age can be ``glued together'' over any common substructure, preserving this substructure but possibly identifying other elements. Countable infinite ultrahomogeneous $L$-structures have been characterized by \Fr\ as those obtained from their ages via a canonical ``back-and-forth'' construction using amalgamation; they are often called \Fr\ limits and are axiomatized by $\Pi_2$ ``extension axioms''.  (For details, see \cite[Theorems 7.1.4, 7.1.7]{MR1221741}.)

A standard result {\cite[Theorem~7.1.8]{MR1221741}} (see also \cite[\S2.7]{MR1066691}) states that when $\cM$ is a countable infinite ultrahomogeneous structure in a countable relational language, $\cM$ has trivial definable closure precisely when its age satisfies the more stringent condition known as the \emph{strong amalgamation property}, which requires that no elements (outside the intersection) are identified during the amalgamation.  Note that in {\cite[\S7.1]{MR1221741}}, strong amalgamation is shown to be equivalent to a property known as ``no algebraicity'', which is equivalent to our notion of 
(group-theoretic)
trivial definable closure for structures in a language with only relation symbols (but not for structures in a language with constant or function symbols).
Thus we obtain the following corollary to Theorem~\ref{maintheorem}.

\begin{corollary}
\label{maincorollary}
Let $L$ be a countable relational language, and let $\cM$ be a countable
infinite $L$-structure.
Suppose $\cM$ is ultrahomogeneous.
The following are equivalent:
\begin{itemize}
\item[(1)]
There is a probability measure on
$\Model$, invariant under the natural action of
$\sym$ on $\Model$,
that is concentrated on 
the set of elements of $\Model$ that are isomorphic to
$\cM$.
\item[($2'$)] The age of $\cM$ satisfies the strong amalgamation
property.
\end{itemize}
\end{corollary}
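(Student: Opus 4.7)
The plan is to deduce the corollary directly from Theorem~\ref{maintheorem} together with a standard model-theoretic equivalence between trivial definable closure and strong amalgamation of the age. Theorem~\ref{maintheorem} already supplies the nontrivial content, establishing that condition~(1) is equivalent to $\cM$ having trivial group-theoretic definable closure, with no hypothesis on $L$ being relational or on $\cM$ being ultrahomogeneous. So the remaining task is to translate, in the setting of the corollary, the condition ``trivial definable closure'' into ``strong amalgamation of the age of $\cM$.''

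For this translation I would invoke the standard result cited in the excerpt, namely \cite[Theorem~7.1.8]{MR1221741}, which asserts that for a countable infinite ultrahomogeneous structure $\cM$ in a relational language, the age of $\cM$ has the strong amalgamation property if and only if $\cM$ has \emph{no algebraicity}, i.e., the pointwise stabilizer in $\Aut(\cM)$ of any finite tuple $\aa$ has no finite orbits outside $\aa$.

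The one point requiring verification is that, in the relational setting, no algebraicity coincides with trivial group-theoretic definable closure as defined in Theorem~\ref{maintheorem}. The direction from no algebraicity to trivial definable closure is immediate, since a fixed point of the stabilizer of $\aa$ lying outside $\aa$ would be a singleton orbit. For the converse, if $\{b_1,\dots,b_k\}$ is a finite orbit of the pointwise stabilizer of $\aa$ disjoint from $\aa$ with $k \ge 1$, then the pointwise stabilizer of the enlarged tuple $\aa b_1 \cdots b_{k-1}$ permutes this orbit while fixing $b_1,\dots,b_{k-1}$, hence also fixes $b_k$, so $b_k \in \dcl_{\cM}(\aa b_1 \cdots b_{k-1})$ and trivial definable closure fails at this longer tuple. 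Chaining these equivalences yields condition~(1) $\iff$ trivial definable closure $\iff$ no algebraicity $\iff$ strong amalgamation $\iff$ condition~(2). I do not anticipate any substantive obstacle here, since all the analytic and combinatorial work has already been absorbed into Theorem~\ref{maintheorem}; the corollary is essentially a matter of aligning the group-theoretic formulation of definable closure with the orbit-theoretic formulation of algebraicity, which is routine once the language is restricted to relation symbols.
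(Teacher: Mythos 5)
Your proof is correct and follows essentially the same route as the paper: combine Theorem~\ref{maintheorem} with Hodges' equivalence (Theorem~7.1.8) between strong amalgamation and no algebraicity for ultrahomogeneous relational structures, after observing that no algebraicity coincides with trivial group-theoretic definable closure. The paper states the latter identification without proof (as a remark in \S\ref{newstrongamalgamation}); your ``enlarged tuple'' argument supplying the nontrivial direction is exactly the standard verification.
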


At the Workshop on Homogeneous Structures, held at the University of
Leeds in 2011, Anatoly Vershik asked whether an analogue of the notion of a
continuum-sized \emph{topologically universal graph} \cite{MR2724668}
exists for an arbitrary \Fr\ limit.
We propose our notion of a
(continuum-sized)
\emph{Borel $L$-structure strongly witnessing
a theory},
defined in Section~\ref{existence},
as an appropriate analogue. 

Our results then show that such a Borel $L$-structure can exist for a \Fr\ limit precisely when its age has the strong amalgamation property.  
If the age of a \Fr\ limit $\cM$ in a countable relational
language $L$
has the strong amalgamation property, then the proof of our main result
involves building a Borel $L$-structure that, just like a topologically
universal graph, has a ``large'' set of witnesses for every (nontrivial) extension axiom.  On the other hand, when the age of $\cM$ does not have the strong amalgamation property, such a Borel $L$-structure cannot exist; according to the machinery of our proof, it would necessarily induce an invariant measure concentrated on $\cM$, violating Corollary~\ref{maincorollary}.

\subsection{Outline of the paper}\ \\
\indent
We begin, in Section~\ref{prelim}, by describing our setting and providing preliminaries.
Throughout this paper we work in a countable language $L$.  We first describe the
infinitary language $\Lwow(L)$.  In particular, we recall the notion of a \emph{Scott
sentence}, a single infinitary sentence in $\Lwow(L)$ that describes a countable structure
up to isomorphism (among countable structures).  
We then define a certain kind of
infinitary $\Pi_2$ sentence, which we call \emph{pithy $\Pi_2$}, and which 
can be thought
of as a ``one-point'' extension axiom.  
We go on to
describe the measure space $\Model$ and define the natural action of $\sym$ on $\Model$,
called the \emph{logic action}. Using these notions, we explain what is meant by
an \emph{invariant measure} and what it means for a measure to be \emph{concentrated} on a
set of structures.  We then recall the group-theoretic notion of \emph{definable closure} and its
connection to the model theory of $\Lwow(L)$.  
Next, 
for any given countable $L$-structure $\cM$,
we describe its \emph{canonical language $L_\Mbar$} and \emph{canonical structure $\Mbar$};
the latter is essentially equivalent to $\cM$, but
is characterized (among
countable structures) by a theory $T_\Mbar$ consisting
entirely of 
``one-point'' extension axioms.
We show that $\cM$ admits an invariant measure if and only if $\Mbar$ does, and that $\cM$ has trivial definable closure if and only if $\Mbar$ does.
Finally,
we review some basic conventions from probability theory.

In Section~\ref{existence}, we prove the existence of an invariant
measure concentrated on an $L$-structure $\cM$ that has trivial definable
closure. We do so by constructing an invariant measure concentrated on its canonical structure $\Mbar$.

The invariant measures that we build in Section~\ref{existence} each come from sampling a continuum-sized
structure. Our method uses a similar framework to that employed 
by Petrov and Vershik in
\cite{MR2724668} for graphs.  
The first-order theory of the Henson graph $\Henson$ is generated by a set
of $\Pi_2$ axioms that characterize it up to isomorphism among countable
graphs.  
Petrov and Vershik construct an invariant
measure concentrated on $\Henson$ by building a continuum-sized structure
that realizes a ``large'' set of witnesses for each of these axioms.  

In our generalization of their construction, 
we build 
a continuum-sized structure that satisfies 
$T_\Mbar$
in a particularly strong way, analogously to \cite{MR2724668}.  Specifically, given 
a $\Pi_2$ sentence of the form $(\forall \xx)(\exists y) \psi(\xx,y)$ in 
$T_\Mbar$,
we ensure that for every tuple $\aa$ in the structure, the sentence $(\exists y)\psi(\aa,y)$ has a ``large'' set of witnesses, whenever $\psi(\aa, b)$ does not hold for any $b\in\aa$.  The construction proceeds inductively by  defining quantifier-free types on intervals, interleaving successive refinements of existing intervals with enlargements by new intervals that provide the ``large'' sets of witnesses.  This is possible by virtue of 
$T_\Mbar$
having a property we call \emph{duplication of quantifier-free types}, which occurs precisely when 
$\Mbar$
has trivial definable closure.  The continuum-sized structure built in this way is such that a random countable structure induced by sampling from it, with respect to an appropriate measure, will be a model of 
$T_\Mbar$
almost surely,  thereby producing 
an
invariant measure
concentrated on $\Mbar$.

Section~\ref{nonexistence} provides the converse, for an arbitrary countable language $L$: If a countable infinite $L$-structure has nontrivial definable closure, it cannot admit an invariant measure.  This is a direct argument that does not require the machinery developed in 
Section~\ref{existence}. 
In fact we present a generalization of
the converse, due to Kechris and Marks,
which states that such an $L$-structure cannot even admit an $\Aut(\cN)$-invariant measure for any $\cN\in\Model$ having trivial definable closure.

In Section~\ref{examples-nonexamples} we apply 
Theorem~\ref{maintheorem}
and 
Corollary~\ref{maincorollary}
to obtain examples of countable infinite structures that admit invariant measures, and those that do not. 
We describe how any structure can be ``blown up'' into one
that admits an invariant measure and also into one that does not.
This allows us to
give examples of countable structures having arbitrary Scott rank that admit invariant measures, and examples of those that do not admit invariant measures.
We then analyze definable closure in well-known countable structures to determine
whether or not they admit invariant measures. 

We conclude, in Section~\ref{more-general}, with several connections to the theory of
graph limits, and additional applications of our results.


\section{Preliminaries}\label{prelim}

Throughout this paper we use uppercase letters to represent sets, lowercase
letters to represent elements of a set and lowercase boldface letters to
represent finite tuples (of variables, or of elements of a structure).
The length $|\xx|$ of a tuple of variables $\xx$ is the number of entries,
not the number of distinct variables, in the tuple, and likewise for
tuples of elements.  We use the notation $(x_1,\ldots ,x_k)$ and $x_1\cdots
x_k$ interchangeably to denote a tuple of variables $\xx$ of length $k$
that has entries $x_1, \ldots , x_k$, in that order, and similarly for
tuples of elements. When it enhances clarity, we write, e.g., $(\xx, \yy)$
for $(x_1, x_2, y_1, y_2)$, when $\xx = x_1 x_2$ and $\yy = y_1 y_2$.
For an $n$-tuple $\aa \in A^n$,
we frequently abuse notation and write $\aa \in A$.

\subsection{Infinitary logic}  \ \\
\indent
We begin by reviewing some basic definitions  from logic. A \emph{language} $L$, also called a \emph{signature}, is a set $L \defas \cR \cup \cC \cup \cF$, where $\cR$ is a set of \emph{relation symbols}, $\cC$ is a set of \emph{constant symbols}, and $\cF$ is a set of \emph{function symbols}, all disjoint.  For each relation symbol $R\in\cR$ and function symbol $f\in\cF$, fix an associated positive integer, called its \emph{arity}.  We take the \emph{equality symbol}, written $=$, to be a logical symbol, not a binary relation symbol in $L$.  In this paper, 
all languages 
are
countable.
Given a language $L$, an \emph{$L$-structure} $\cM$ is a non-empty set $M$ endowed with interpretations of the symbols in $L$.
We sometimes write $x\in \mathcal{M}$ in place of $x \in M$. 

We now describe the class $\Lwow(L)$ of infinitary formulas in the language $L$.  
For more on infinitary logic and Scott sentences, see \cite{MR0344115}, \cite{MR0424560}, or \cite[\S2.4]{MR1924282}.
For the basics of first-order languages, terms, formulas, and theories, see \cite[\S1.1]{MR1924282}. 

\begin{definition}
The class $\Lwow(L)$ is the smallest collection of formulas that 
contains
all atomic formulas of $L$;
the formulas $(\exists x)\psi(x)$ and $\neg \chi$, where $\psi(x), \chi \in \Lwow(L)$; and
the formula $\bigwedge_{i\in I} \varphi_i$, where $I$ is an arbitrary countable set, 
$\varphi_i \in \Lwow(L)$
for each $i\in I$, and the set of free variables of $\bigwedge_{i \in I} \varphi_i$ is finite.
\end{definition}

A formula of $\Lwow(L)$ may have countably infinitely many variables, but only finitely many that are free.  Note that the more familiar $\Lww(L)$, consisting of first-order formulas, is the restriction of $\Lwow(L)$ where conjunctions are over \emph{finite} index sets $I$.  As is standard, we will freely use the abbreviations $\forall \defas \neg\exists\neg$ and $\bigvee\defas \neg\bigwedge \neg$, as well as binary $\And$ and $\Or$, in formulas of $\Lwow(L)$.  We will sometimes refer to formulas of $\Lwow(L)$ as \emph{$L$-formulas}.

A sentence is a formula with no free variables.  A \emph{(countable) theory} of $\Lwow(L)$ is an arbitrary (countable) collection of sentences in $\Lwow(L)$. Note that a theory need not be deductively closed.

For a formula $\varphi(x_1, \ldots, x_n)$ of $\Lwow(L)$
whose free variables are among $x_1, \ldots, x_n$, all distinct, the notation $\cM \models \varphi(a_1, \ldots, a_n)$ means that $\varphi(x_1, \ldots, x_n)$ holds in $\cM$ when $x_1, \ldots, x_n$ are instantiated, respectively, by the elements $a_1, \ldots, a_n$ of the underlying set $M$.  
For a theory $T$, we write $\cM\models T$ to mean that $\cM \models \varphi$ for every sentence $\varphi\in T$; in this case, we say that $\cM$ is a \emph{model} of $T$.  We write $T \models \varphi$ to mean that the sentence $\varphi$ is true in every $L$-structure that is a model of $T$.  As a special case, we write $\models \varphi$ to mean $\emptyset \models \varphi$, i.e., the sentence $\varphi$ is true in every $L$-structure.  When $\psi(\xx)$ is a formula with free variables among the entries of the finite tuple $\xx$, we write $ \models \psi(\xx)$ to mean $\models (\forall \xx) \psi(\xx)$.

A key model-theoretic property of $\Lwow(L)$ is that any countable $L$-structure can be characterized up to isomorphism, among countable $L$-structures, by a single sentence of $\Lwow(L)$.

\begin{proposition}[see {\cite[Corollary~VII.6.9]{MR0424560}}
 or {\cite[Theorem~2.4.15]{MR1924282}}]
Let $L$ be a countable language, and let $\cM$ be a countable $L$-structure. 
There is a sentence $\sigma_\cM \in \Lwow(L)$, called
the (canonical) \defn{Scott sentence} of $\cM$, such that for every countable $L$-structure $\cN$, we have $\cN \models \varphi$ if and only if $\cN \cong \cM$.
\end{proposition}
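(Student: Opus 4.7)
The plan is to follow the classical Scott construction via back-and-forth, building the Scott sentence from a transfinite hierarchy of formulas indexed by the countable tuples of $\cM$. For each countable ordinal $\alpha$ and each finite tuple $\aa \in \cM$, I would define by transfinite recursion a formula $\varphi_\aa^\alpha(\xx) \in \Lwow(L)$ that captures the isomorphism type of $\aa$ ``up to depth $\alpha$''. At level zero, $\varphi_\aa^0(\xx)$ is the conjunction of all atomic and negated atomic $L$-formulas (in the variables $\xx$) true of $\aa$ in $\cM$; since $L$ is countable this is a countable conjunction. At successor stages, set
\[
\varphi_\aa^{\alpha+1}(\xx) \;\defas\; \varphi_\aa^\alpha(\xx) \;\And\; \bigwedge_{b\in\cM}(\exists y)\,\varphi_{\aa b}^\alpha(\xx,y) \;\And\; (\forall y)\bigvee_{b\in\cM} \varphi_{\aa b}^\alpha(\xx,y),
\]
and at limit $\lambda$ take $\varphi_\aa^\lambda \defas \bigwedge_{\alpha<\lambda} \varphi_\aa^\alpha$. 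Because $\cM$ is countable and these disjunctions and conjunctions are indexed by $\cM$, each formula has countably many subformulas and only $|\xx|$-many free variables, so each $\varphi_\aa^\alpha$ lies in $\Lwow(L)$ provided $\alpha<\omega_1$.

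Next I would show that the hierarchy stabilizes below $\omega_1$. The formulas induce, for each $n$, a descending chain of equivalence relations on $\cM^n$ given by $\aa \sim_\alpha \aa'$ iff $\cM\models\varphi_\aa^\alpha(\aa')$. Since $\cM^n$ is countable, the chain of partitions can strictly refine only countably often, so there is a countable ordinal $\alpha_0$ (the Scott rank) after which $\sim_\alpha$ does not change for any $n$. This gives $\cM \models (\forall \xx)\bigl(\varphi_\aa^{\alpha_0}(\xx) \to \varphi_\aa^{\alpha_0+1}(\xx)\bigr)$ for every finite tuple $\aa\in\cM$. I then define the Scott sentence as
\[
\varphi \;\defas\; \varphi_\emptyset^{\alpha_0} \;\And\; \bigwedge_{n<\omega}\ \bigwedge_{\aa\in \cM^n}(\forall \xx)\bigl(\varphi_\aa^{\alpha_0}(\xx)\to\varphi_\aa^{\alpha_0+1}(\xx)\bigr),
\]
which is a single $\Lwow(L)$-sentence (again using that $\cM$ is countable).

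That $\cM \models \varphi$ is immediate from the construction and the choice of $\alpha_0$. For the converse, suppose $\cN$ is a countable $L$-structure with $\cN \models \varphi$. I would carry out a back-and-forth argument to produce an isomorphism $\cM \to \cN$: maintain a finite partial matching $\aa \mapsto \bb$ such that $\cN \models \varphi_\aa^{\alpha_0}(\bb)$; the base step uses $\varphi_\emptyset^{\alpha_0}$. Given such a matching, the ``forth'' step takes an arbitrary new $a\in\cM$ and uses the conjunct $(\exists y)\varphi_{\aa a}^{\alpha_0}(\xx,y)$ inside $\varphi_\aa^{\alpha_0+1}$ (whose truth on $\bb$ in $\cN$ is delivered by the universal implication in $\varphi$) to find a suitable $b\in\cN$; the ``back'' step uses the $(\forall y)\bigvee$ conjunct symmetrically. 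Enumerating $\cM$ and $\cN$ and alternating these steps produces a bijection that preserves all atomic formulas, hence an isomorphism.

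The one delicate point I expect to handle carefully is verifying that the hierarchy really stabilizes by some countable ordinal simultaneously at every arity; this is the place where one must use the countability of $\cM$ rigorously, observing that the total information carried by the partitions of all $\cM^n$ is encoded by a countable object and therefore cannot properly refine uncountably often. Once that is pinned down, both the membership of $\varphi$ in $\Lwow(L)$ and the back-and-forth construction are routine.
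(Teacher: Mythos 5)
Your proposal is correct and is essentially the standard Scott analysis argument (the $\alpha$-characterizing formulas, stabilization at a countable Scott rank, and the back-and-forth). The paper itself offers no proof of this proposition but cites it to Barwise and to Marker, and the proof given in those references is exactly the construction you describe, so there is no substantive divergence to report. The one place your sketch compresses something worth spelling out is the stabilization step: to see that a strictly decreasing chain of equivalence relations on a countable set cannot have length $\omega_1$, one observes that at each successor stage some ordered pair drops out of the relation, and these dropped pairs are pairwise distinct across stages, yielding an injection from the chain's length into the countable set $\cM^n\times\cM^n$; taking the supremum over $n$ of the resulting countable stabilization ordinals gives the Scott rank $\alpha_0$. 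With that filled in, the argument is complete.
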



\subsection{Pithy $\Pi_2$ theories}
\label{pithy}
\ \\
\indent
Countable theories consisting of ``extension axioms'' will play a crucial
role in our main construction in Section~\ref{existence}.
  In fact, we will work with a notion of
``one-point extension axioms''\!, which allows us to realize witnesses
for all possible finite configurations, one element at a time.  In a sense
that we make precise in \S\ref{canonical language}, an arbitrary
countable structure is essentially equivalent to one (in a different
language) admitting an
axiomatization
consisting only of one-point extension axioms.  

\begin{definition}
A sentence in $\Lwow(L)$ is $\Pi_2$ when it is of the form $(\forall \xx)(\exists \yy)\psi(\xx, \yy)$, where the (possibly empty) tuple $\xx\yy$ consists of distinct variables, and $\psi(\xx,\yy)$ is quantifier-free.  A countable theory $T$ of $\Lwow(L)$ is $\Pi_2$ when every sentence $\varphi\in T$ is $\Pi_2$.
\end{definition}

In our main construction, it will be convenient to work with a restricted kind
of extension axiom, 
which we call \emph{pithy}.

\begin{definition}
A $\Pi_2$ sentence $(\forall\xx)(\exists\yy)\psi(\xx,\yy) \in \Lwow(L)$, where $\psi(\xx,\yy)$ is quanti\-fier-free, is said to be \defn{pithy} when the tuple $\yy$ consists of precisely one variable.  A countable $\Pi_2$ theory $T$ of $\Lwow(L)$ is said to be pithy when every sentence in $T$ is pithy.  Note that we allow the degenerate case where $\xx$ is the empty tuple and the $\Pi_2$ sentence is of the form $(\exists y)\psi(y)$.
\end{definition}

Note that a pithy $\Pi_2$ sentence can be written uniquely in the form $(\forall\xx)(\exists y)\psi(\xx,y)$, where $\psi$ is quantifier-free, and where the free variables of $\psi$ are among the entries of $\xx y$.


\subsection{The logic action 
on the measurable space $\Model$}\label{logicaction}
\ \\ \indent
Let $L$ be
an arbitrary countable language. Define
$\Model$ to be the set of $L$-structures that have underlying set $\Naturals$.
For every 
formula $\varphi(x_1, \ldots, x_j) \in \Lwow(L)$
and $n_1, \ldots, n_j \in \Naturals$,
where $j$ is the number of free variables of $\varphi$,
define
\[
\llrr{\varphi(n_1, \dots, n_j)} \defas
 \{\M \in \Model \st \M\models \varphi(n_1, \dots, n_j)\}.
\]
The set $\Model$ becomes a measurable space when it is
equipped with the Borel $\sigma$-algebra
generated by 
subbasic open sets of the form:
\[
\llrr{R(n_1, \dots, n_j)}
\]
where $R\in L$ is a $j$-ary relation symbol and $n_1, \ldots, n_j \in \Naturals$;
\[
\llrr{c = n}
\]
where $c\in L$ is a constant symbol and $n \in \Naturals$; and
\[
	\llrr{f(n_1, \dots, n_k) = n_{k+1}}
\]
where $f\in L$ is a $k$-ary function symbol and $n_1, \ldots, n_{k+1}\in \Naturals$.

For any sentence $\varphi$ of $\Lwow(L)$, the set $\llrr{\varphi}$ is Borel, by \cite[Proposition~16.7]{MR1321597}.  Given 
a countable
$L$-structure $\cM$,
recall that the Scott sentence $\sigma_\cM\in\Lwow(L)$ determines $\cM$ up to isomorphism among countable structures. Therefore 
$\llrr{\sigma_\cM} = \{\mathcal{N} \in \Model: \mathcal{N} \cong \cM\}$,
the isomorphism class 
of $\cM$ 
in $\Model$,
is Borel.

Denote by $\sym$ the permutation group of $\Naturals$.  There is a natural group action, called the \emph{logic action}, of $\sym$ on $\Model$, induced by permutation of the underlying set $\Naturals$; for
more details, see \cite[\S16.C]{MR1321597}.  Note that the orbit of an $L$-structure $\cM\in\Model$ under this action is the isomorphism class of $\cM$ in $\Model$.  We call a (Borel) measure $\mu$ on $\Model$ \defn{invariant} when it is invariant under the 
logic action,
i.e., for every Borel set $X \subseteq \Model$ and every $g \in \sym$, we have $\mu(X) = \mu(g\cdot X)$.
Given a subgroup $G$ of $\sym$, 
written $G\le \sym$, a (Borel) measure $\mu$ is 
\emph{$G$-invariant} when it is invariant under the restriction of
the logic action to $G$.

Let $\mu$ be a probability measure on $\Model$.  We say that $\mu$ is
\defn{concentrated} on a Borel set $X \subseteq \Model$ when $\mu(X) = 1$.
We are interested in structures up to isomorphism, and for a countable
infinite $L$-structure $\cM$, we say that \emph{$\mu$ is concentrated on $\cM$} when $\mu$ is concentrated on 
the isomorphism class 
of $\cM$
in $\Model$.  We say that $\cM$ \defn{admits
an invariant measure}
when 
such an 
invariant measure
$\mu$ exists.
Note that when we
	say that $\mu$ is concentrated on some class of structures,
we mean that $\mu$ is concentrated on the restriction of that class to $\Model$.

\subsection{Definable closure}
\label{newstrongamalgamation}
\ \\
\indent
Our characterization of structures admitting invariant measures is in terms
of the group-theoretic notion of \emph{definable closure}.

An \emph{automorphism} of an $L$-structure $\cM$ is a bijection $g\colon M \to M$ such that
\[
R^{\cM}\bigl(g(a_1),\ldots, g(a_j) \bigr) \qquad \text{if and only if} \qquad
R^{\cM}(a_1, \ldots, a_j)
\]
for every relation symbol $R\in L$ of arity $j$ and all elements $a_1, \ldots, a_j \in \cM$,
\[g(c^{\cM}) = c^{\cM}\]
 for every constant symbol $c\in L$,  and 
\[f^{\cM}\bigl(g(b_1),\ldots, g(b_k) \bigr) =
g\bigl(f^{\cM}(b_1,\ldots,b_k )\bigr)  \]
for every function symbol $f\in L$ of arity $k$ and elements $b_1, \ldots,
b_k \in \cM$.

We write $\Aut(\cM)$ to denote the group of automorphisms of $\cM$.

\begin{definition}
Let $\cM$ be an $L$-structure, and let $\aa\in \cM$. The \defn{definable closure} of $\aa$ in $\cM$, denoted $\dcl_\cM(\aa)$, is the collection of $b\in \cM$ that are fixed by all automorphisms of $\cM$ fixing $\aa$ pointwise, i.e., 
the set of $b \in \cM$ for which the set
\[
\bigl\{g(b) \st g \in \Aut(\cM) \text{~s.t.~} (\forall a\in \aa)\ g(a) = a
\bigr\}
\]
is a singleton, namely $\{ b\}$.
\label{dcldef}
\end{definition}

This notion is sometimes known as the \emph{group-theoretic definable closure}.
For countable structures, it has the following equivalent formulation in
terms of the formulas of $\Lwow(L)$ that use parameters from the tuple $\aa$.

Given an $L$-structure $\cM$ and a tuple $\aa\in\cM$, let $L_\aa$ denote the language $L$
expanded by a new constant symbol for each element of $\aa$. Then let
$\cM_\aa$ denote the $L_\aa$-structure 
given by 
$\cM$ with
the 
entries of $\aa$ named by their respective constant symbols in $L_\aa$.

\begin{lemma}[see {\cite[Lemma~4.1.3]{MR1221741}}]
Let $L$ be a countable language, and let $\cM$ be a countable $L$-structure with $\aa\in \cM$. Then $b \in \dcl_\cM(\aa)$ if and only if there is a formula $\varphi\in \Lwow(L_\aa)$ with one free variable,
whose unique realization in $\cM_\aa$ is $b$, i.e.,
\[
\cM_\aa \models \varphi(b) \And [(\forall x, y) (\varphi(x) \And \varphi(y)) \rightarrow x = y].
\]
\label{ctbllemma}
\end{lemma}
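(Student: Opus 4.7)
The plan is to prove each direction separately. The easy direction is ($\Leftarrow$): suppose $\varphi(x) \in \Lwow(L_\aa)$ is uniquely realized in $\cM_\aa$ by $b$. Given any $g \in \Aut(\cM)$ that fixes $\aa$ pointwise, note that $g$ is also an automorphism of the $L_\aa$-structure $\cM_\aa$, because the only additional symbols of $L_\aa$ are constants naming entries of $\aa$, and $g$ fixes each such element. By a routine induction on the complexity of formulas in $\Lwow(L_\aa)$, every automorphism preserves satisfaction of every such formula; in particular $\cM_\aa \models \varphi(g(b))$. Uniqueness of the realization forces $g(b) = b$, so $b \in \dcl_\cM(\aa)$.

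The substantive direction is ($\Rightarrow$), which I would establish via the back-and-forth / Scott analysis of the countable structure $\cM_\aa$. First I would define, by recursion on countable ordinals $\alpha$, equivalence relations $\equiv_\alpha$ on $\cM_\aa$ as follows: $c \equiv_0 c'$ iff $c$ and $c'$ satisfy the same atomic $L_\aa$-formulas, and $c \equiv_{\alpha+1} c'$ iff $c \equiv_\alpha c'$ and for every $d$ there exists $d'$ with $(c,d) \equiv_\alpha (c',d')$, and vice versa (with the obvious extension to tuples and the usual intersection at limit stages). Since $\cM$ is countable, there are only countably many equivalence classes at each stage, so the sequence $\equiv_\alpha$ stabilizes at some countable ordinal $\gamma$. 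By the standard back-and-forth lemma for countable structures, $\equiv_\gamma$ coincides on $\cM_\aa$ with the orbit equivalence under $\Aut(\cM_\aa)$.

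Next I would construct, by parallel recursion on $\alpha$, formulas $\varphi^\alpha_c(x) \in \Lwow(L_\aa)$ whose realizations in $\cM_\aa$ are exactly the $\equiv_\alpha$-class of $c$: at the base stage, take the conjunction of all atomic and negated atomic $L_\aa$-formulas satisfied by $c$ (countably many, since $L_\aa$ is countable); at successor stages, take a conjunction that records $\varphi^\alpha_c$ together with, for each $\equiv_\alpha$-class $[d]$ realized by some $(c,d)$, both $(\exists y)\,\varphi^\alpha_{cd}(x,y)$ and the requirement $(\forall y)\bigvee_{[d]} \varphi^\alpha_{cd}(x,y)$; limits are conjunctions. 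All joins and meets are countable because $\cM_\aa$ has only countably many orbits at each stage. Setting $\varphi(x) \defas \varphi^\gamma_b(x)$, the realizations of $\varphi$ in $\cM_\aa$ are exactly the $\Aut(\cM_\aa)$-orbit of $b$. Since $b \in \dcl_\cM(\aa)$, this orbit is $\{b\}$, so $\varphi$ is uniquely realized by $b$.

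The main obstacle is bookkeeping: one must confirm both that $\equiv_\gamma$ captures the orbit equivalence (the standard back-and-forth argument, using countability to build an automorphism element-by-element) and that the recursion produces genuine $\Lwow(L_\aa)$-formulas (checking that all conjunctions and disjunctions introduced are countable and that the set of free variables stays finite at every stage). Both facts are routine features of Scott's analysis for countable structures in a countable language, and can also be extracted directly from \cite[Lemma~4.1.3]{MR1221741}.
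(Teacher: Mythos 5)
Your proof is correct. Note that the paper does not prove this lemma itself but cites Hodges \cite[Lemma~4.1.3]{MR1221741}; the argument you have reconstructed --- the easy direction by automorphism-invariance of $\Lwow(L_\aa)$-satisfaction, and the hard direction by building Scott formulas via the back-and-forth hierarchy $\equiv_\alpha$ on the countable structure $\cM_\aa$, using countability to guarantee stabilization at a countable ordinal and countability of the conjunctions and disjunctions at each stage --- is exactly the standard proof underlying that reference.
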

\vspace{-15pt}

When the first-order theory of $\cM$  is $\aleph_0$-categorical,
it suffices to consider only first-order formulas $\varphi\in\Lww$ in Lemma~\ref{ctbllemma} (see {\cite[Corollary~7.3.4]{MR1221741}});
in this case, group-theoretic definable closure coincides with
the standard notion of \emph{model-theoretic definable closure}.

\begin{definition}
We say that an $L$-structure $\cM$ has \defn{trivial definable closure}
when the definable closure of every tuple $\aa\in\cM$ is trivial, i.e.,
$\dcl_{\cM}(\aa) = \aa$ for all $\aa \in \cM$.
\end{definition}

Note that if $\cM$ has trivial definable closure, then $L$ cannot have constant symbols, and every function of $\cM$ is a
\emph{choice function} (or \emph{selector}),
i.e., for every function symbol $f\in L$ and every $\aa\in\cM$, we have $f^\cM(\aa) \in \aa$.

It is sometimes more convenient to work with (group-theoretic) \emph{algebraic closure}.

\begin{definition}
Let $\cM$ be an $L$-structure, and let $\aa\in \cM$. The \defn{algebraic closure} of $\aa$
in $\cM$, denoted $\acl_\cM(\aa)$, is the collection of 
$b\in \cM$ whose orbit under those automorphisms of $\cM$ fixing $\aa$ pointwise is finite.
In other words,
$\acl_{\cM}(\aa)$ is the set of $b\in\cM$ for which the set
\[
\bigl\{g(b) \st g \in \Aut(\cM)\text{~s.t.~}(\forall a\in \aa)\ g(a) =
a\bigr\}
\]
is finite.  We say that $\cM$ has \defn{trivial algebraic closure} when
the algebraic closure of every tuple $\aa\in\cM$ is trivial, i.e.,
$\acl_{\cM}(\aa) = \aa$ for all $\aa \in \cM$.
\end{definition}

Note that an $L$-structure has trivial algebraic closure if and only if it has trivial definable closure. This fact will be useful in Section~\ref{examples-nonexamples} when we find examples of structures admitting invariant measures.

\subsection{The canonical language and structure}
\label{canonical language}
\ \\
\indent
We now define the \emph{canonical language} $L_\Abar$ and \emph{canonical structure} $\Abar$ associated to each structure $\cA \in \Model$.
We will see that the canonical structure admits an invariant measure precisely when the original structure does, and has trivial definable closure precisely when the original does. 
In the proof of
our main theorem,
this will
enable us 
to work in 
the setting of canonical structures.  
We will also establish below
that canonical structures admit pithy $\Pi_2$ axiomatizations, 
a fact which we will use in our main construction in Section~\ref{existence}.
For more details on canonical languages and structures, see, e.g., \cite[\S1.5]{MR1425877}.

\begin{definition}
\label{def-canonical}
Let $\cA \in \Model$.
For each
$k\in\Nats$ 
let $\sim_k$ be the
equivalence relation 
on $\Nats^k$ 
given by
\[
\xx \sim_k \yy \qquad \text{if and only if} \qquad (\exists g\in\Aut(\cA))
\ \ 
g(\xx) = \yy.
\]
Define the \defn{canonical language for $\cA$} 
to be the (countable) relational language $L_{\Abar}$ that 
consists of, for
each $k\in\Nats$ and 
$\sim_k$-equivalence class $E$ of $\cA$,
a $k$-ary relation symbol $R_E$.
Then define the \defn{canonical structure for $\cA$} 
to be the structure
$\Abar\in \Str_{L_{\Abar}}$ in which,
for each 
$\sim_k$-equivalence class $E$ of $\cA$,
the interpretation
of $R_E$ is the corresponding orbit $E\subseteq \Nats^k$.
\end{definition}

By the definition of 
$L_\Abar$, 
the $\Aut(\cA)$-orbits of tuples in $\cA$ are $\Lwow(L_\Abar)$-definable in $\Abar$.
In fact, 
as we will see in Lemma~\ref{canon-is-interdefinable},
these orbits are already $\Lwow(L)$-definable in $\cA$.
We begin by noting the folklore result
that the canonical structure $\Abar$ has elimination of quantifiers.

\begin{lemma}
\label{qfdef-canonical}
Let $\cA \in \Model$, and consider its canonical
structure $\Abar$.
Then for all $k\in\Nats$, every $\Aut(\cA)$-invariant subset of
$\Nats^k$ is the 
set
of realizations of some quantifier-free
formula $\psi(\xx) \in \Lwow(L_\Abar)$.
In particular, 
for every formula $\varphi(\xx) \in \Lwow(L_\Abar)$, 
as the set of its realizations is $\Aut(\cA)$-invariant,
there is a quantifier-free
formula $\psi(\xx) \in \Lwow(L_\Abar)$ such that
\[
\Abar \, \models \, \varphi(\xx) \leftrightarrow \psi(\xx)
\]
holds.
\end{lemma}

The following definition of \emph{($\Lwow$-)interdefinability} extends that of the standard
notion of interdefinability from the
setting of $\aleph_0$-categorical theories (see, e.g.,
{\cite[\S1]{MR831437}}).
In particular, two structures are interdefinable when
they have the same underlying set (not necessarily countable) and the same $\Lwow$-definable sets.

Let $L_0$ and $L_1$ be countable languages.
Let $\cN_0$ be an $L_0$-structure and $\cN_1$ an $L_1$-structure having the same underlying set (not necessarily countable).

\begin{definition}
\label{interdefinability}
An \defn{$\Lwow$-interdefinition} (or simply, \emph{interdefinition}) between 
$\cN_0$ and $\cN_1$
is a pair 
$(\Psi_0, \Psi_1)$
of maps
\begin{eqnarray*}
	\Psi_0 \hspace*{-5pt}& : &  \Lwow(L_0) \to \Lwow(L_1) \qquad \text{and}\\
	\Psi_1 \hspace*{-5pt}& : & \Lwow(L_1) \to \Lwow(L_0)
\end{eqnarray*}
satisfying, for $j \in \{0, 1\}$,
\begin{eqnarray*}
\cN_{1-j}
& \models & \ \Psi_{j} \circ \Psi_{1-j} (\eta) \, \leftrightarrow \, \eta,\\
\cN_{1-j}
& \models & \  \neg \Psi_j(\chi) \, \leftrightarrow \, \Psi_j(\neg \chi), \\
\cN_{1-j}
& \models & \  \bigwedge_{i\in I} \Psi_j(\varphi_i) \, \leftrightarrow \,
\Psi_j\bigl(\bigwedge_{i\in I} \varphi_i\bigr), \qquad \text{and} \\
\cN_{1-j}
& \models & \  (\exists x) \Psi_j(\psi(x)) \, \leftrightarrow \,
\Psi_j\bigl((\exists x) \psi(x)\bigr),
\end{eqnarray*}
where 
$\eta\in\Lwow(L_{1-j})$ and
$\chi,\,
\psi(x) 
\in \Lwow(L_j)$,
where $I$ is an arbitrary countable set
and
each $\varphi_i \in \Lwow(L_j)$,
and such that the free variables of $\Psi_j(\xi)$ are the same as those of $\xi$ for every $\xi\in \Lwow(L_j)$.

We say that $\cN_0$ and $\cN_1$ are \defn{interdefinable via $(\Psi_0,
\Psi_1)$} when 
$(\Psi_0, \Psi_1)$  is
an interdefinition 
between 
$\cN_0$ and $\cN_1$
such
that  for every $k \in\Nats$ and every formula $\psi(\xx) \in \Lwow(L_0)$ with 
$k$
free variables, we have
\[
\{ \mm \in \Nats^k \st \cN_0 \models \psi(\mm) \} 
=
\{ \mm \in \Nats^k \st \cN_1 \models \Psi_0(\psi)(\mm) \} 
.
\]
We say that $\cN_0$ and $\cN_1$ are \defn{interdefinable} when they are interdefinable via some interdefinition.
\end{definition}

Note that $\cN_0$ and $\cN_1$ are interdefinable precisely when,
for every $k\in\Nats$,
a
set $X\subseteq \Nats^k$ is definable 
in $\cN_0$
(without parameters)
by an $\Lwow(L_0)$-formula
if and only if 
it is definable 
in $\cN_1$
(without parameters)
by an
$\Lwow(L_1)$-formula.

\begin{lemma}
\label{interdefinition-lemma}
Suppose $(\Psi_0, \Psi_1)$ is an interdefinition between 
$\cN_0$ and $\cN_1$, and let 
$\cN'_0$ 
be an $L_0$-structure 
(not necessarily countable)
that satisfies the same $\Lwow(L_0)$-theory as $\cN_0$.
Then there is a unique $L_1$-structure $\cN'_1$ such that $\cN'_0$ and
$\cN'_1$ are interdefinable via $(\Psi_0, \Psi_1)$. In particular, $\cN'_1$ satisfies
the same $\Lwow(L_1)$-theory as $\cN_1$.
\end{lemma}
\begin{proof}
Let $\cN'_1$ be the unique structure with the same underlying set as $\cN'_0$
such that for
any atomic $L_1$-formula $\psi$, 
the set of realizations of $\psi$ in $\cN'_1$ is precisely the 
set of realizations of $\Psi_1(\psi)$ in $\cN'_0$.
Because $\cN'_0$ 
satisfies the same sentences of $\Lwow(L_0)$ as $\cN_0$, 
by considering 
Definition~\ref{interdefinability}
one can see that
$(\Psi_0, \Psi_1)$
is an interdefinition between $\cN'_0$ and $\cN'_1$;
further, $\cN'_1$ is the only such $L_1$-structure. 
It is immediate that $\cN'_1$ satisfies the same sentences of $\Lwow(L_1)$ as $\cN_1$.
\end{proof}

We will use the following folklore result in the proof of our main theorem.

\begin{lemma}
\label{canon-is-interdefinable}
Let $\cA\in \Model$ and let $\Abar$ be its canonical structure.
Then $\cA$ is interdefinable with $\Abar$.
\end{lemma}

In fact, one can show that 
for $\cA \in \Str_{L_0}$ and $\cB
\in \Str_{L_1}$,
the structures $\cA$ and $\cB$ are interdefinable if and only if $L_\Abar =
L_\Bbar$ and $\Abar = \Bbar$.
Along with Lemma~\ref{canon-is-interdefinable}, this implies that
a structure in $\Str_L$
is characterized up to
interdefinability
by its canonical structure.

As an immediate corollary of Lemmas~\ref{interdefinition-lemma} and \ref{canon-is-interdefinable}, we see that given $\cA\in \Model$ and an arbitrary $L$-structure $\cN$ having the same $\Lwow(L)$-theory as $\cA$, there is a unique $L_\Abar$-structure with which $\cN$ is interdefinable via the interdefinition given in Lemma~\ref{canon-is-interdefinable} between $\cA$ and $\Abar$. 
When $\cN\in\Model$, this $L_\Abar$-structure is $\Nbar$, the canonical structure of $\cN$; in Corollary~\ref{reduct}, we will call the analogous $L_\Abar$-structure $\Nbar$ even when $\cN$ is uncountable.

We now show that 
interdefinability preserves 
whether or not a countable structure admits
an invariant measure and also whether or not it has trivial definable closure.

\begin{lemma}
\label{invmeasinter}
Suppose $\cA\in \Str_{L_0}$ and $\cB\in \Str_{L_1}$ are interdefinable. Then $\cA$ admits an
invariant measure if and only if $\cB$ does.
\end{lemma}
\begin{proof}
Let $(\Psi_0, \Psi_1)$ be an interdefinition between $\cA$ and $\cB$.
We first define a 
Borel 
map
\[
\iota \colon 
\{\cC \in \Str_{L_0} \ \st \ \cC \cong \cA\}
\to
\{\ccD \in \Str_{L_1} \ \st \ \ccD \cong \cB\}
\]
that commutes with the logic action.
For every 
$\cC\in \Str_{L_0}$ isomorphic to $\cA$, 
let $\iota(\cC)$ be
the $L_1$-structure, given by
Lemma~\ref{interdefinition-lemma},
that has the same $\Lwow(L_1)$-theory as $\cB$. 
Since $\iota(\cC)$ and $\cB$ are countable, they
are in fact isomorphic.
Further, since 
$(\Psi_0, \Psi_1)$ is an interdefinition between $\cC$ and $\iota(\cC)$, it follows that
$\iota$ is a bijection.
Recall that the $\sigma$-algebra of $\Str_{L_1}$ is generated by sets of the
form $\llrr{\varphi(n_1, \ldots, n_j)}$, for $\varphi \in \Lwow(L_1)$ and $n_1, \ldots, n_j \in\Nats$, where $j$ is the number of free variables in $\varphi$. 
By the definition of $\iota$, we have
\[
\iota^{-1}\bigl(\llrr{\varphi(n_1, \ldots, n_j)}\bigr) = 
\bigllrr{\Psi_1(\varphi)(n_1, \ldots, n_j)}
,
\]
which is a Borel set in $\Str_{L_0}$.
Hence the map $\iota$ is Borel.

Observe that for every
$g\in\sym$ and 
$\cC\in \Str_{L_0}$ isomorphic to $\cA$,
we have $\iota(g \cdot
\cC) = g\cdot \iota(\cC)$, which is interdefinable with $g \cdot \cC$. 
Hence for every invariant probability measure $\mu$
concentrated on $\cA$, its pushforward along $\iota$ is an invariant
probability measure concentrated on $\cB$. By symmetry, $\cA$ admits an
invariant measure if and only if $\cB$ does.
\end{proof}

In particular, taking $\cB = \Abar$, we see that a 
countable structure admits an invariant measure if and only if 
its canonical structure does.

\begin{lemma}
\label{trivial-dcl-interdefinability}
Suppose $\cA\in \Str_{L_0}$ and $\cB\in \Str_{L_1}$ are interdefinable.
Then $\cA$ has trivial definable closure if and only if $\cB$ does.
\end{lemma}
\begin{proof}
Suppose $\cA$ does not have trivial definable closure. Let
$\aa, b \in \Nats$ with $b\not \in \aa$ and $b\in\dcl(\aa)$.
By Lemma~\ref{ctbllemma}, 
there is a formula $\varphi\in
\Lwow((L_0)_\aa)$
whose unique realization in $\cA_\aa$ is $b$.
Note that $\cA_\aa$ and $\cB_\aa$ are interdefinable.
Hence there is a corresponding
$\Lwow((L_1)_\aa)$-formula whose unique realization in $\cB_\aa$ is $b$,
witnessing the non-trivial definable closure of $\aa$ in $\cB$.
Therefore $\cB$ does not have trivial definable closure either.
The result follows by symmetry.
\end{proof}

By
Lemmas~\ref{invmeasinter} and \ref{trivial-dcl-interdefinability}, 
for countable structures, the properties of having trivial definable closure, and of admitting an invariant measure, are 
determined
up to interdefinability.
Further, by
Lemma~\ref{canon-is-interdefinable}, each of these properties holds of a countable structure if and only if 
the respective property holds
of its canonical structure,
and hence is
determined completely by its automorphism group.

Finally, we show that for every 
countable
structure, there is a pithy $\Pi_2$ theory in its canonical language that
characterizes 
its canonical structure
up to isomorphism among countable structures.
From this, it will follow that the canonical structure is \emph{ultrahomogeneous}.

\begin{definition}
We say that an $L$-structure $\cM$ is \defn{ultrahomogeneous} if any isomorphism between two finitely generated substructures of $\cM$ extends to an automorphism of $\cM$.
\end{definition}

\begin{proposition}
\label{pithypitwotheory-canonical}
Let $\cA\in\Model$. There is a countable 
$\Lwow(L_\Abar)$-theory, every sentence of which is
pithy $\Pi_2$,
and all of whose countable models are
isomorphic to the canonical structure $\Abar$.
\end{proposition}
\begin{proof}
Consider the $\Lwow(L_\Abar)$-theory 
consisting of the following pithy
$\Pi_2$ axioms,
for each $k \in \Nats$:
\begin{itemize}
\item $(\forall \xx)\, \bigl(R_E(\xx) \leftrightarrow \bigwedge
\{ \neg R_{G}(\xx)
\st R_{G} \neq R_{E} \text{~is a $k$-ary relation symbol in $L_\Abar$}\}\bigr)$, 
\item $(\forall \xx)\, \bigvee\{
R_{G}(\xx) \st R_{G} \text{~is a $k$-ary relation symbol in $L_\Abar$}\}$, and
\item $(\forall \xx)\, \bigl( R_{E}(\xx) \, \to \, (\exists y)\, R_{F}(\xx, y)
\bigr)
$,
\end{itemize}
where $|\xx| = k$, and $R_{E}$ and $R_{F}$ are, respectively, $k$- and $(k+1)$-ary relation
symbols in $L_\Abar$ such that
\[
\Abar\, \models \,(\forall \xx y) \, \bigl (R_F(\xx, y) \to R_E(\xx) \bigr)
. 
\]

It is immediate that $\Abar$ satisfies this theory. Furthermore, for any
two countable models of the theory, the first two axioms require that
every $k$-tuple in either model
realizes exactly one $k$-ary relation.
Hence given two $k$-tuples $\aa, \bb$ of $\Abar$ satisfying the same relation, we may use
the third axiom 
to
construct an automorphism of $\Abar$ mapping $\aa$ to $\bb$,
by a standard back-and-forth argument.
This establishes that the theory has one countable model up to isomorphism.
\end{proof}

Note that the above argument further shows the standard result that $\Abar$ is ultrahomogeneous.
The pithy $\Pi_2$ theory of Proposition~\ref{pithypitwotheory-canonical} can therefore be thought of as an infinitary
analogue of a \Fr\ theory.
In particular, as with \Fr\ theories in
first-order relational languages, the age of 
$\Abar$ has strong amalgamation
precisely when 
$\Abar$ has trivial definable closure.
(For more details on \Fr\ theories, see \cite[\S7.1]{MR1221741}.)
Therefore, even if $\cA$ is not ultrahomogeneous itself, 
Corollary~\ref{maincorollary} could be applied
to a
structure
that is essentially equivalent to $\cA$,
namely the canonical structure $\Abar$. Indeed,
by Lemma~\ref{trivial-dcl-interdefinability},
$\Abar$ has
strong amalgamation precisely when $\cA$ has trivial definable closure.


\subsection{Basic probability notions}
\ \\
\indent
Throughout this paper, we make use of conventions from measure-theoretic probability theory to talk about random structures having certain almost-sure properties. For a general reference on probability theory, see, e.g., \cite{MR1876169}.

Let $(\Ohm,\mathcal{G}, \Pr)$ be a probability space, and suppose $(H, \mathcal{H})$ is a measurable space.  Recall that an \emph{$H$-valued random variable} $Z$ is a $(\mathcal{G},\mathcal{H})$-measurable function \linebreak $Z\colon\Ohm\to H$.  Such a function $Z$ is also sometimes called a \emph{random element in} $H$.  The \emph{distribution} of $Z$ is defined to be the probability measure $\Pr\circ Z^{-1}$.

Given a property $E\in \mathcal{H}$, we say that $E$ holds of $Z$ \emph{almost surely}, abbreviated \emph{a.s.}, when $\Pr\bigl(Z^{-1}(E)\bigr) = 1$.  Sometimes, in this situation, we say instead that $E$ holds of $Z$ \emph{with probability one}.  For example, given a random element $Z$ in $\Model$ and a Borel set $\llrr{\varphi}$, where $\varphi$ is a sentence of $\Lwow(L)$, we say that $\llrr{\varphi}$ holds of $Z$ a.s.\ when $\Pr\bigl(Z^{-1}(\llrr{\varphi})\bigr) = \Pr\bigl(\{ w \in \Ohm \st Z(w) \models \varphi\}\bigr) = 1$.  In fact, we will typically not make the property explicit, and will, for instance, write that the random structure $Z \models \varphi$~a.s.\ when $\Pr\bigl(\{ w \in \Ohm \st Z(w) \models \varphi\}\bigr) = 1$; 
this probability is abbreviated as  \linebreak $\Pr\{ Z \models \varphi\}$.

In the proof of our main theorem, when we show that a measure $\mu$ on $\Model$ is concentrated on the set of models in $\Model$ of some sentence $\varphi$, we will do so by demonstrating that, with probability one, $Z \models \varphi$, where $Z$ is a random structure with distribution $\mu$.

A sequence of ($H$-valued) random variables is said to be
\emph{independent and identically distributed}, abbreviated \emph{i.i.d.},
when each random variable has the same distribution and the random
variables are mutually independent. When this distribution is $m$, we say that the sequence
is \emph{$m$-i.i.d.}


\section{Existence of invariant measures}
\label{existence}

We now 
show the
existence
of invariant measures concentrated on a countable infinite structure having trivial definable closure.  
We prove this in Theorem~\ref{InvariantMeasuresConcentrated}, which constitutes one direction of
Theorem~\ref{maintheorem},
the main result of this paper.

The following is an outline of our proof; 
in the presentation below we
will, however, develop the machinery in the reverse order.  Let $\cM\in\Model$ be a
countable infinite $L$-structure 
having
trivial definable closure, and $L_\Mbar$ its canonical language and $\Mbar$ its canonical structure,
as in \S\ref{canonical language}.
Let
$T_\Mbar$ be a
countable pithy $\Pi_2$ 
theory 
of $\Lwow(L_\Mbar)$
all of whose countable models are isomorphic  to $\Mbar$,
as in 
Proposition~\ref{pithypitwotheory-canonical}.
We show, in \S\ref{invariant}, that such a theory
$T_\Mbar$
has a
property that we call \emph{duplication of quantifier-free types}.  In
\S\S\ref{duplication}--\ref{construction}, we use this property to build a
\emph{Borel $L_\Mbar$-structure} $\PP$ that \emph{strongly witnesses
$T_\Mbar$}.
Roughly speaking, this means that $\PP$ is an $L_\Mbar$-structure
with underlying set
$\Reals$, whose relations
are
Borel,
such that for every pithy $\Pi_2$ sentence 
$(\forall \xx )(\exists y) \varphi(\xx,y)\in T_\Mbar$
and tuple $\aa\in\PP$ of the appropriate length,
either there is a ``large'' set of elements $b\in\PP$ such that
$\varphi(\aa, b)$ holds, or else there is some $b\in\aa$ such that
$\varphi(\aa, b)$ holds; in either case, $(\exists y) \varphi(\aa,y)$ is
``witnessed''.  In \S\S\ref{BorelLStructuresSection}--\ref{WitnessesT} we
show how to use a Borel $L_\Mbar$-structure that strongly witnesses
$T_\Mbar$
to produce
an invariant measure concentrated on the set of models of 
$T_\Mbar$
that are in
$\Str_{L_\Mbar}$.
By the initial choice of 
$T_\Mbar$,
this invariant measure 
on $\Str_{L_\Mbar}$ is concentrated on $\Mbar$.
By results of \S\ref{canonical language}, we obtain
an invariant measure on $\Model$ concentrated on $\cM$.

\subsection{Sampling from Borel $L$-structures}
\label{BorelLStructuresSection}
\ \\ \indent
We begin by introducing a certain kind of $L$-structure with underlying set $\Reals$, whose relations and functions
 are Borel (with respect to the standard topology on $\Reals$).  
Our definition is motivated by Petrov and Vershik's notion of a \emph{Borel graph} \cite[Definition~1]{MR2724668}.
The model theory of such Borel structures has earlier been studied by Harvey
Friedman (published in \cite{MR819547}). For a survey, including more
recent work, see \cite[\S1]{MR3205056}.

\begin{definition}
\label{BorelLstructure}
Let $\PP$ be an $L$-structure whose underlying set is $\Reals$.  We say
that $\PP$ is a \defn{Borel $L$-structure} if for all relation symbols
$R \in L$, the set $\bigl\{\aa \in \PP^j \st R^\PP(\aa)\bigr\}$
is a Borel subset of $\Reals^j$, where $j$ is the arity of $R$; and for all function symbols $f\in L$, the function $f^\PP\colon \PP^k \to \PP$ is Borel (equivalently, the graph of $f^\PP$ is Borel),
where $k$ is the arity of $f$.
\end{definition}

Note that although structures with underlying set $\Reals$ will suffice for our
purposes, we could have defined the notion of a \emph{Borel $L$-structure} 
more generally, for other measure spaces.

Our first goal in \S\ref{BorelLStructuresSection} is to define a sampling procedure that, given a Borel $L$-structure with certain properties,
yields an invariant measure on $\Str_L$. We begin with several definitions.

Given an $L$-structure $\cN$ of arbitrary cardinality, we write $\Sub(\cN)$ to denote the set of those sequences in $\cN^\omega$ that contain all constants of $\cN$ and are closed under the application of functions of $\cN$. Such a sequence is precisely an enumeration (possibly with repetition) of the underlying set of some countable substructure of $\cN$.
Note that whenever $L$ is relational, or when $\cN$ has trivial definable closure, we have
$\Sub(\cN) = \cN^\omega$ (but not conversely).
We say that $\cN$ is \defn{samplable} when 
$\Sub(\cN) = \cN^\omega$.
Observe that $\cN$ is samplable precisely 
when $L$ has no constant symbols and every function is a choice function.

Next we describe a map taking an element of $\Sub(\cN)$ to an
$L$-structure with underlying set $\Naturals$. 
In the case when  $\cN$ is samplable, we will apply
this map to a random sequence of elements of $\cN$
to induce a random $L$-structure with underlying set $\Naturals$.

\begin{definition}
Suppose $\cN$ is 
an 
$L$-structure (of arbitrary cardinality).
Define the function $\F_{\cN}\colon 
\Sub(\cN)
\to \Model$  as follows.
For 
$\AA=(a_i)_{i\in \w} \in \Sub(\cN)$,
let $\F_{\cN}(\AA)$ be the $L$-structure with underlying set $\Naturals$ satisfying
\[
\F_{\cN}(\AA) \, \models \,R(n_1, \dots, n_j) 
\qquad \text{if and only if} \qquad
\cN\models R(a_{n_1}, \dots, a_{n_j})
\]
for every relation symbol $R \in L$ and 
for all $n_1, \dots, n_j \in \Naturals$, where $j$ is the arity of $R$;
satisfying
\[
\F_{\cN}(\AA) \, \models \,
(c = n)
\qquad \text{if and only if} \qquad
\cN\models 
(c = a_n)
\]
for every constant symbol $c \in L$ and for all $n\in\Nats$; 
satisfying
\[
\F_{\cN}(\AA) \, \models \,
f(n_1, \ldots, n_k)  = n_{k+1}
\qquad \text{if and only if} \qquad
\cN\models f(a_{n_1}, \ldots, a_{n_k})  = a_{n_{k+1}}
\]
for every function symbol $f \in L$ and
for all $n_1, \dots, n_{k+1} \in \Naturals$, where $k$ is the arity of $f$;
and for which equality is inherited from $\Naturals$, i.e.,
\[
\F_{\cN}(\AA)
\, \models \,(m \neq  n)
\]
just when $m$ and $n$ are distinct natural numbers.
\end{definition}
When the sequence $\AA\in\Sub(\cN)$ has no repeated entries,
$\F_{\cN}(\AA) \in\Model$ is iso\-morphic to a countable infinite substructure of $\cN$.
In fact, this will hold a.s.\ for the random $L$-structures that we construct in
\S\ref{WitnessesT}.

Recall the definition of the Borel $\sigma$-algebra on $\Model$ in \S\ref{logicaction}.
Define a \defn{subbasic formula} of $\Lwow(L)$ to be a formula
of the form $x_1 = x_2$, $R(x_1, \ldots, x_j)$, 
$c = x_1$, or $f(x_1, \ldots, x_k) = x_{k+1}$, 
where $R\in L$ is a relation symbol and $j$ its arity, 
$c\in L$ is a constant symbol, $f\in L$ is a function symbol and $k$ its arity, and the 
$x_i$
are distinct variables.

\begin{lemma}
Let $\PP$ be a Borel $L$-structure.
Then $\F_{\PP}$ is a Borel measurable function.
\end{lemma}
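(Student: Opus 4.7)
The plan is to verify Borel measurability of $\F_{\PP}$ by checking preimages of a generating family for the Borel $\sigma$-algebra on $\Model$. Since $L$ is relational throughout this section, the Borel $\sigma$-algebra on $\Model$ is generated by the subbasic sets $\widehat{R}(n_1,\dots,n_j)$ where $R\in L$ has arity $j$ and $n_1,\dots,n_j\in\Naturals$. Hence it suffices to show that $\F_{\PP}^{-1}\bigl(\widehat{R}(n_1,\dots,n_j)\bigr)$ is a Borel subset of $\PP^\omega = \Reals^\omega$ for each such subbasic set.

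First I would unwind the definition of $\F_{\PP}$: for $\AA = (a_i)_{i\in\omega}\in\Reals^\omega$, the structure $\F_{\PP}(\AA)$ satisfies $R(n_1,\dots,n_j)$ if and only if $\PP\models R(a_{n_1},\dots,a_{n_j})$. Therefore
\[
\F_{\PP}^{-1}\bigl(\widehat{R}(n_1,\dots,n_j)\bigr) \,=\, \bigl\{\AA\in\Reals^\omega \st (a_{n_1},\dots,a_{n_j})\in R^{\PP}\bigr\} \,=\, \pi_{n_1,\dots,n_j}^{-1}\bigl(R^{\PP}\bigr),
\]
where $\pi_{n_1,\dots,n_j}\colon\Reals^\omega\to\Reals^j$ is the coordinate projection $\AA\mapsto(a_{n_1},\dots,a_{n_j})$.

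Next I would observe that $\pi_{n_1,\dots,n_j}$ is continuous with respect to the product topology on $\Reals^\omega$ and the standard topology on $\Reals^j$, hence Borel measurable. Since $\PP$ is a Borel $L$-structure, by Definition~\ref{BorelLstructure} the set $R^{\PP}\subseteq \Reals^j$ is Borel. The preimage of a Borel set under a Borel measurable map is Borel, so $\F_{\PP}^{-1}\bigl(\widehat{R}(n_1,\dots,n_j)\bigr)$ is Borel in $\Reals^\omega$. As this holds for each generator of the Borel $\sigma$-algebra on $\Model$, a standard $\sigma$-algebra argument (preimages commute with complements and countable unions) extends measurability to all Borel sets, establishing that $\F_{\PP}$ is Borel measurable. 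No real obstacle arises here; the lemma is essentially a bookkeeping check, made routine once one notes that coordinate projections are continuous and that the relational assumption on $L$ means there are no constant or function symbols whose interpretations one must separately track.
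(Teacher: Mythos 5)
Your proof is correct and follows essentially the same route as the paper: reduce to the subbasic sets $\widehat{R}(n_1,\dots,n_j)$, rewrite $\F_{\PP}^{-1}\bigl(\widehat{R}(n_1,\dots,n_j)\bigr)$ as $\pi_{n_1,\dots,n_j}^{-1}\bigl(\{\aa\in\PP^j\st\PP\models R(\aa)\}\bigr)$, and conclude using that $\pi$ is Borel and $R^{\PP}$ is Borel by Definition~\ref{BorelLstructure}. The only cosmetic difference is that you explicitly note $\pi$ is continuous, whereas the paper simply asserts it is Borel.
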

\begin{proof}
It suffices to show that the preimages of subbasic open sets of $\Model$ are Borel.
Let $\zeta$ be a subbasic formula of $\Lwow(L)$ with $j$ free variables, and let $n_1, \ldots, n_j \in \Naturals$.
We wish to
show that $\F_{\PP}^{-1}\Bigl(\llrr{\zeta(n_1, \dots, n_j)}\Bigr)$ is Borel.

Let $\pi_{n_1, \dots, n_j} \st \PP^\w \rightarrow \PP^j$ be the projection
map defined by
\[
\pi_{n_1, \dots, n_j}\bigl((a_i)_{i\in \w}\bigr) =
(a_{n_1}, \dots, a_{n_j});
\] this map is Borel. Then
\[
\F_{\PP}^{-1} \Bigl(\llrr{\zeta(n_1, \dots, n_j)}\Bigr)
= \pi_{n_1, \dots, n_j}^{-1}\bigl(\{\aa \in \PP^j\st \PP\models \zeta(\aa)\}\bigr),
\]
as both sides of the equation are equal to
\[ \bigl\{(a_i)_{i\in\omega}\in\PP^\omega \st \PP \models \zeta(a_{n_1}, \ldots,
a_{n_j})\bigr\}.
\]
By Definition \ref{BorelLstructure} we have that $\{\aa \in \PP^j \st
\PP\models \zeta(\aa)\}$ is Borel. 
Hence \linebreak $\F_{\PP}^{-1}\Bigl(\llrr{\zeta(n_1, \dots, n_j)}\Bigr)$ is also Borel, as desired.
\end{proof}

We now
show how to induce an invariant measure on $\Model$ from a samplable Borel $L$-structure $\PP$.
Suppose $m$ is a 
probability measure on $\Reals$. Denote by $m^\infty$ the corresponding product measure on $\Reals^{\w}$, i.e., the distribution
of a sequence of independent samples from
$m$.
Note that $m^\infty$ is invariant under arbitrary reordering of the indices.
We will obtain an invariant measure on $\Model$ by taking the distribution of the \emph{random} structure with underlying set $\Naturals$ corresponding to an $m$-i.i.d.\ sequence of elements of 
$\PP$.

This technique for constructing invariant measures by sampling a continuum-sized structure was used by
Petrov and Vershik
\cite{MR2724668},
and the following notation and results parallel those in
\cite[\S2.3]{MR2724668}.  A similar method of sampling is used 
in \cite[\S2.6]{MR2274085}
to
produce the countable random graphs known as \emph{W-random graphs} from
continuum-sized \emph{graphons}; 
for more details on the relationship between 
these notions and our
construction,
see
\S\ref{graphlimits}.

\begin{definition}
\label{mupm}
Let $\PP$ be a Borel $L$-structure,
and let $m$ be a probability measure on $\Reals$.
Define the measure $\mu_{(\PP,m)}$ on $\Model$ to be
\[
\mu_{(\PP, m)} \defas
 m^{\infty}\circ \F_{\PP}^{-1}.
\]

When 
$\PP$ is samplable,
$m^\infty\bigl(\F_\PP^{-1}(\Model)\bigr) =1$, and so
$\mu_{(\PP,m)}$ is a probability measure, namely the distribution of a random
element in $\Model$
induced via $\F_{\PP}$ by an $m$-i.i.d.\ sequence on $\Reals$.
\end{definition}

The following lemma makes precise the sense in which the invariance of $m^\infty$
(under the action of $\sym$ on $\Reals^\omega$) yields the invariance of $\mu_{(\PP,m)}$ (under the logic action).

\begin{lemma}
\label{mu-is-invariant}
Let $\PP$ be a Borel $L$-structure,
and let $m$ be a probability measure on $\Reals$.
Then the measure $\mu_{(\PP, m)}$ is invariant under the logic action.
\end{lemma}
\begin{proof}
It suffices to verify that $\mu_{(\PP,m)}$ is invariant on a $\pi$-system
(i.e., a family of sets closed under finite intersections) that generates
the Borel $\sigma$-algebra on $\Model$, by \cite[Lemma~1.6.b]{MR1155402}. 
We first show
that $\mu_{(\PP,m)}$ is invariant 
on subbasic open sets
determined by subbasic formulas 
of $\Lwow(L)$
along with tuples instantiating them.
We then
show its invariance for the $\pi$-system consisting of sets 
determined by finite conjunctions 
of such subbasic formulas.

Let $\zeta$ be a subbasic formula of $\Lwow(L)$ with $j$ free variables,
and let $n_1, \ldots,
n_j\in\Naturals$.
Consider the set 
$\llrr{\zeta(n_1, \ldots, n_j)}$,
and let $g\in \sym$.
Note that
\[
\bigllrr{\zeta \bigl( g(n_1), \dots, g(n_j)\bigr)}
=
\bigl \{ g \cdot \XX \st \XX \in \llrr{\zeta(n_1, \dots, n_j)} \bigr \},
\]
where $\,\cdot\,$ denotes the logic action of $\sym$ on $\Model$.
We will show that
\begin{equation}
\label{goal}
\mu_{(\PP,m)}\Bigl(\bigllrr{\zeta \bigl(g(n_1), \dots, g(n_j)\bigr)} \Bigr)
=
\mu_{(\PP,m)}\Bigl(\llrr{\zeta(n_1, \dots, n_j)}\Bigr).\tag{$\star$}
\end{equation}
We have
\[
\F_{\PP}^{-1} \Bigl(
\bigllrr{\zeta \bigl( g(n_1), \dots, g(n_j)\bigr)}
  \Bigr)
=
\pi_{g(n_1), \dots, g(n_j)}^{-1}\bigl(\{\aa \in \PP^j\st \PP\models \zeta(\aa)\}\bigr)
\]
and
\[
\F_{\PP}^{-1} \Bigl(
\llrr{\zeta ( n_1, \dots, n_j )}
  \Bigr)
=
\pi_{n_1, \dots, n_j}^{-1}\bigl(\{\aa \in \PP^j\st \PP\models \zeta(\aa)\}\bigr).
\]
Because $m^\infty$  is invariant under the action of $\sym$ on
$\Reals^\omega$ (given by permuting coordinates of $\Reals^\omega$), the 
Borel subsets 
\[
\F_{\PP}^{-1} \Bigl(
\bigllrr{\zeta \bigl( g(n_1), \dots, g(n_j)\bigr)}
  \Bigr)
\]
and
\[
\F_{\PP}^{-1} \Bigl(
\llrr{\zeta ( n_1, \dots, n_j)}
  \Bigr)
\]
of $\Reals^\omega$ have equal $m^\infty$-measure, and so \eqref{goal} holds.

Now consider 
subbasic formulas
$\zeta_1, \ldots, \zeta_k$  of $\Lwow(L)$ with $j_1, \ldots, j_k$ free variables, respectively,
and let $j' \defas \sum_{i=1}^k j_k$. Denote the conjunction of 
these formulas on
non-overlapping variables by 
\[\varphi (x_1, \ldots,
x_{j'})\defas  \zeta_1(x_1, \ldots, x_{j_1}) 
\And \cdots \And \zeta_k(x_{j'-j_k+1}, \ldots, x_{j'}),\]
where $x_1, \ldots, x_{j'}$ are distinct variables.
We have 
\[
\llrr{\varphi(n_1, \ldots, n_{j'})} = 
\bigllrr{\zeta_1(n_1, \ldots, n_{j_1})}
\cap \cdots \cap
\bigllrr{\zeta_k(n_{j'-j_k+1}, \ldots, n_{j'})},
\]
for all $n_1, \ldots, n_{j'} \in \Naturals$ (not necessarily distinct),
and from this we see that
\[
	\bigllrr{\varphi\bigl(g(n_1), \ldots, g(n_{j'})\bigr)} = 
	\bigllrr{\zeta_1\bigl(g(n_1), \ldots, g(n_{j_1})\bigr)}
\cap \cdots \cap
\bigllrr{\zeta_k\bigl(g(n_{j'-j_k+1}), \ldots, g(n_{j'})\bigr)}.
\]
	Hence
\[
\F_{\PP}^{-1} \Bigl(
\bigllrr{\varphi \bigl( g(n_1), \dots, g(n_{j'})\bigr)}
  \Bigr)
\]
and
\[
\F_{\PP}^{-1} \Bigl(
\llrr{\varphi ( n_1, \dots, n_{j'})}
  \Bigr)
\]
have equal measure under $m^\infty$.
Hence $\mu_{(\PP,m)}$  is invariant under the logic action.
\end{proof}


Recall that a measure on $\Reals$ is said to be \defn{continuous} (or
\emph{nonatomic}) if it assigns measure zero to every singleton.
When $m$ is continuous, samples from $\mu_{(\PP, m)}$ are a.s.\ isomorphic to substructures of $\PP$.

\begin{lemma}
\label{continuouslemma}
Let $\PP$ be a samplable Borel $L$-structure
and let $m$ be a continuous probability measure on $\Reals$.  Then 
$\mu_{(\PP, m)}$ 
is 
a probability measure on $\Model$ that is
concentrated on the 
union of
isomorphism classes 
of countable infinite substructures of $\PP$.
\end{lemma}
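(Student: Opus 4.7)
The plan is to show that the $m$-i.i.d.\ sequence $\AA=(a_i)_{i\in\w}$ almost surely has all entries distinct, and then to observe that on this event $\F_{\PP}(\AA)$ is isomorphic (via $i\mapsto a_i$) to the substructure of $\PP$ with underlying set $\{a_i\st i\in\w\}$, which is a countable infinite substructure of $\PP$.

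First I would establish the distinctness claim. For each pair $i\neq j$, by Fubini's theorem applied to the product $m\times m$,
\[
m^{\infty}\bigl(\{\AA\in\Reals^{\w}\st a_i=a_j\}\bigr)
= \int_{\Reals} m(\{x\})\,dm(x) = 0,
\]
since $m$ is continuous. Taking a countable union over pairs $i<j$ shows that the set
\[
D\defas \bigl\{\AA\in\Reals^{\w}\st a_i\neq a_j\text{ for all } i\neq j\bigr\}
\]
has $m^{\infty}(D)=1$.

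Next, on the event $D$, the map $\iota_{\AA}\st\Naturals\to\PP$ sending $n\mapsto a_n$ is an injection, and by the definition of $\F_{\PP}$, for every relation symbol $R\in L$ of arity $j$ and all $n_1,\dots,n_j\in\Naturals$,
\[
\F_{\PP}(\AA)\models R(n_1,\dots,n_j)\ \iff\ \PP\models R(a_{n_1},\dots,a_{n_j}).
\]
Hence $\iota_{\AA}$ is an isomorphism from $\F_{\PP}(\AA)$ onto the substructure of $\PP$ with underlying set $\{a_i\st i\in\w\}$, which is a countable infinite substructure of $\PP$ because $\iota_\AA$ is injective. Thus $\F_{\PP}(D)$ is contained in the (Borel) set of elements of $\Model$ that are isomorphic to some countable infinite substructure of $\PP$. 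Since $\mu_{(\PP,m)}=m^{\infty}\circ\F_{\PP}^{-1}$ and $m^{\infty}(D)=1$, we conclude that $\mu_{(\PP,m)}$ is concentrated on the union of isomorphism classes in $\Model$ of countable infinite substructures of $\PP$, as required.

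The only real step worth checking is the distinctness claim, which is a standard Fubini argument; no obstacles are expected beyond keeping the coding in $\F_{\PP}$ straight so that the induced bijection is genuinely an isomorphism of $L$-structures.
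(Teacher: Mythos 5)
Your proof is correct and takes essentially the same route as the paper: establish that an $m$-i.i.d.\ sequence almost surely has pairwise distinct entries (the paper invokes continuity together with pairwise independence; you spell this out with a Fubini computation), and then observe that on this event $\F_{\PP}(\AA)$ is isomorphic, via $n\mapsto a_n$, to the induced substructure of $\PP$ on $\{a_i\st i\in\w\}$. The extra detail you provide fills in precisely the steps the paper leaves implicit.
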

\begin{proof}
As noted before, because $\PP$ is samplable, 
$m^\infty\bigl(\F_\PP^{-1}(\Model)\bigr) =1$, and so
$\mu_{(\PP,m)}$ is a probability measure.
Let
$\AA = (a_i)_{i\in \w}$ be
an $m$-i.i.d.\ sequence of $\Reals$.
Note that the induced countable structure $\F_{\PP}(\AA)$ is now a \emph{random $L$-structure}, i.e., a $\Model$-valued random variable, whose distribution is $\mu_{(\PP,m)}$.
Because $m$ is continuous, and since for any $k\ne \ell$ the random variables $a_k$
and $a_{\ell}$ are independent,  the sequence $\AA$ has no repeated entries
a.s.
Hence
$\F_{\PP}(\AA)$
is a.s.\ isomorphic to
a countable infinite (induced) substructure of $\PP$.
\end{proof}

\subsection{Strongly witnessing a pithy $\Pi_2$ theory}
\label{WitnessesT}
\ \\ \indent
We have seen how to construct invariant measures on $\Model$ by sampling
from a samplable Borel $L$-structure.  
We now describe
a property 
that will give us sufficient control over 
such measures 
to 
ensure that they are concentrated on the set of models, in $\Model$, of a given
countable pithy $\Pi_2$ theory $T$ of $\Lwow(L)$.
For this we define when
a Borel $L$-structure $\PP$ and a measure $m$ \emph{witness} $T$,
generalizing the key property of Petrov and Vershik's \emph{universal
measurable graphs} in \cite[Theorem~2]{MR2724668}.  From this, we define
when $\PP$ \emph{strongly witnesses} $T$, a notion that we find more
convenient to apply.

We begin by defining the notions of \emph{internal} and \emph{external witnesses}.
\begin{definition}
Let $\cM$ be an $L$-structure containing a tuple $\aa$, and let
$\psi(\xx,y)$ be
a quantifier-free formula of $\Lwow(L)$,
all of whose free variables are among $\xx y$.
We say that an element $b\in\cM$ is a
\defn{witness} for 
$(\exists y)\psi(\aa, y)$ when $\cM\models\psi(\aa, b)$.  We say that such an element $b$ is an \defn{internal witness} when $b\in\aa$, and an \defn{external witness} otherwise.
\end{definition}

Recall that a measure $m$ on $\Reals$ is said to be \defn{nondegenerate} when every nonempty
open set has positive measure.

\begin{definition}
\label{Twitness}
Let $\PP$ be a Borel $L$-structure and let $m$ be a probability measure on $\Reals$.  Suppose $T$ is a countable pithy $\Pi_2$ theory of $\Lwow(L)$.  We say that the pair $(\PP, m)$ \defn{witnesses $T$} if for every sentence $(\forall \xx)(\exists y)\psi(\xx,y) \in T$, and for every tuple $\aa \in \PP$ such that $|\aa| = |\xx|$,  we have either
\begin{itemize}
\item[\emph{(i)}]
\quad $\PP \models \psi(\aa,b)$ for some $b\in\aa$, or
\vspace{5pt}
\item[\emph{(ii)}]
\quad $m\bigl(\{ b \in \PP \st \PP \models \psi(\aa,b)\}\bigr ) > 0$.
\end{itemize}
We say that \defn{$\PP$ strongly witnesses $T$} when, for every
nondegenerate probability measure $m$ on $\Reals$, the pair $(\PP,m)$ witnesses $T$.
\end{definition}

Intuitively, the two possibilities \emph{(i)} and \emph{(ii)} say that
witnesses for $(\exists y) \psi(\aa,y)$ are easy to find: Either an internal witness already exists among the parameters $\aa$, or else 
witnesses are plentiful elsewhere in the structure $\PP$, according to $m$.

Strong witnesses
simply 
allow
us to work without keeping
track of a measure $m$.  In fact, when we build structures $\PP$ that
strongly witness a theory, in \S\ref{construction}, we will be 
more concrete, by declaring entire intervals to be external witnesses.

These definitions generalize two of the key notions in \cite{MR2724668}.  
Let $L_G$ be the language of graphs.
A \emph{universal measurable graph} $(X,m,E)$  as defined in
\cite[Definition~3]{MR2724668} roughly corresponds to a Borel
$L_G$-structure $(X,E)$ with vertex set $X$ and edge relation $E$ for which
$((X,E), m)$ witnesses the theory of the Rado graph $\Rado$.
A \emph{topologically universal graph} $(X,E)$ as defined in
\cite[Definition~4]{MR2724668} roughly corresponds to a Borel 
$L_G$-structure that strongly witnesses the theory of 
$\Rado$
by virtue of entire intervals being witnesses.  
Theorem~\ref{BorelLStructuresWitnessingTLeadToInvariantMeasures}
and Corollary~\ref{stronglywitnessingcorollary} below
are inspired directly by Petrov and Vershik's constructions.

We will use (continuum-sized) samplable Borel $L$-structures strongly witnessing a
countable pithy $\Pi_2$ theory $T$ to produce random \emph{countable} structures that satisfy $T$
almost surely.
However, we first note that 
the property of strongly witnessing such a $T$ is powerful enough to ensure that 
a samplable
Borel $L$-structure is itself a model of $T$.

\begin{lemma}
\label{SubmodelsSatisfyingT}
Let $\PP$ be a samplable Borel $L$-structure, and let $T$ be a countable pithy $\Pi_2$ theory of $\Lwow(L)$.  If $\PP$ strongly witnesses $T$, then $\PP\models T$.
\end{lemma}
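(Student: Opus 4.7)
The plan is to unpack the definitions and observe that ``strongly witnesses'' directly implies the existential clause of each pithy $\Pi_2$ axiom holds at every tuple. The only nontrivial input needed is the existence of a single nondegenerate probability measure on $\Reals$, which is immediate (e.g., one may take any absolutely continuous measure, such as the standard Gaussian, that assigns positive mass to every nonempty open set).

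Fix such a nondegenerate probability measure $m$ on $\Reals$. Because $\PP$ strongly witnesses $T$, the pair $(\PP, m)$ witnesses $T$ in the sense of Definition~\ref{Twitness}. Now let $(\forall \xx)(\exists y)\psi(\xx,y)$ be an arbitrary sentence in $T$, where $\psi(\xx,y)$ is quantifier-free and all free variables of $\psi$ are among $\xx y$. Let $\aa \in \PP$ be an arbitrary tuple with $|\aa| = |\xx|$. By the witnessing property, either (i) there is some $b \in \aa$ with $\PP \models \psi(\aa, b)$, or (ii) the set $\{b \in \PP \st \PP \models \psi(\aa, b)\}$ has positive $m$-measure. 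In case (i), the element $b$ is directly a witness for $(\exists y)\psi(\aa, y)$. In case (ii), any set of positive measure is in particular nonempty, so again a witness $b \in \PP$ exists.

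Thus in either case $\PP \models (\exists y)\psi(\aa, y)$. Since $\aa$ was arbitrary of the appropriate length, $\PP \models (\forall \xx)(\exists y)\psi(\xx, y)$. As this sentence was an arbitrary element of $T$, we conclude $\PP \models T$.

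I do not anticipate any obstacle: the argument is a direct verification that strongly witnessing, which is stated in terms of \emph{all} nondegenerate measures $m$, already yields satisfaction by simply invoking it for \emph{one} such $m$ and noting that positive measure implies nonemptiness. The lemma essentially records that the notion of strongly witnessing is at least as strong as modelhood for pithy $\Pi_2$ theories, which is the mild sanity check that legitimizes the definition.
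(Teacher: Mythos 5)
Your proof is correct and takes essentially the same approach as the paper: fix (or quantify over) a nondegenerate probability measure $m$, apply Definition~\ref{Twitness} to each sentence of $T$ and each tuple $\aa$, and observe that both alternatives yield a witness, in case \emph{(ii)} because a set of positive measure is nonempty.
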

\begin{proof}
Fix a pithy $\Pi_2$ sentence $(\forall \xx)(\exists y)\psi(\xx,y) \in T$.
Suppose $\aa \in \PP$, where $|\aa| = |\xx|$. If possibility \emph{(i)} of
Definition~\ref{Twitness} holds, then there is an internal witness for $(\exists y)\psi(\aa,y)$, i.e., there is some $b\in\aa$ such that $\PP\models \psi(\aa, b)$.  Otherwise, possibility \emph{(ii)} holds, and so the set $\{ b \in \PP \st \PP \models \psi(\aa,b)\}$ of external witnesses has positive $m$-measure for an arbitrary nondegenerate probability measure $m$ on $\Reals$; in particular,
this set is nonempty.  Either way, for all $\aa\in\PP$ we have $\PP\models (\exists y)\psi(\aa, y)$, and therefore $\PP \models (\forall \xx)(\exists y)\psi(\xx, y)$. Thus $\PP \models T$.
\end{proof}


When the measure $m$ is continuous, samples from $\mu_{(\PP,m)}$ are a.s.\ models of $T$.

\begin{theorem}
\label{BorelLStructuresWitnessingTLeadToInvariantMeasures}
Let $T$ be a countable pithy $\Pi_2$ theory of $\Lwow(L)$, and let $\PP$ be a samplable Borel $L$-structure.
Suppose $m$ is a continuous probability measure on $\Reals$ such that $(\PP,m)$ witnesses $T$.  Then $\mu_{(\PP,m)}$ is concentrated on the set of structures in $\Model$ that are models of $T$.
\end{theorem}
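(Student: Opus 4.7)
The plan is to let $Z$ be a random element of $\Model$ with distribution $\mu_{(\PP,m)}$, as in Definition~\ref{mupm}, and show that $Z \models T$ almost surely. By construction, $Z = \F_\PP(\AA)$ where $\AA = (a_i)_{i \in \omega}$ is an $m$-i.i.d.\ sequence in $\Reals$; since $m$ is continuous, the entries of $\AA$ are pairwise distinct a.s., exactly as in the proof of Lemma~\ref{continuouslemma}. Because $T$ is countable and each pithy $\Pi_2$ sentence in $T$ has only finitely many universally quantified variables ranging over $\Naturals$, it suffices, taking countable intersections, to fix a single sentence $\varphi = (\forall\xx)(\exists y)\psi(\xx,y) \in T$ with $|\xx| = k$ and a single tuple $\mathbf{n} = (n_1,\ldots,n_k) \in \Naturals^k$, and show that $Z \models (\exists y)\psi(\mathbf{n},y)$ almost surely; the case $k = 0$ is a degenerate simplification of what follows.

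Fix such $\varphi$ and $\mathbf{n}$, and set $\aa \defas (a_{n_1},\ldots,a_{n_k})$, a random tuple in $\PP^k$. Define
\[
W \defas \bigl\{\vv \in \PP^k : \PP \models \psi(\vv, v_j) \text{ for some } j \leq k\bigr\},
\]
where $v_j$ denotes the $j$-th entry of $\vv$; this is a Borel subset of $\Reals^k$ because $\psi$ is a quantifier-free formula of $\Lwow(L)$ and $\PP$ is a Borel $L$-structure. On the event $\{\aa \in W\}$, some $j \leq k$ satisfies $\PP \models \psi(\aa, a_{n_j})$; combined with the a.s.\ distinctness of the $a_i$, the definition of $\F_\PP$ then yields $Z \models \psi(\mathbf{n}, n_j)$, and hence $Z \models (\exists y)\psi(\mathbf{n},y)$.

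On the complementary event $\{\aa \notin W\}$, possibility \emph{(i)} of Definition~\ref{Twitness} fails for $\aa$, so the hypothesis that $(\PP,m)$ witnesses $T$ forces $m(B(\aa)) > 0$, where $B(\vv) \defas \{b \in \PP : \PP \models \psi(\vv, b)\}$. Conditionally on $\aa$, the samples $\{a_\ell : \ell \in \Naturals \setminus \{n_1,\ldots,n_k\}\}$ remain i.i.d.\ with distribution $m$, so by the second Borel--Cantelli lemma, a.s.\ infinitely many of them lie in $B(\aa)$. Picking any such $\ell$ gives $\PP \models \psi(\aa, a_\ell)$, which, again by the a.s.\ distinctness of the $a_i$ and the definition of $\F_\PP$, translates to $Z \models \psi(\mathbf{n}, \ell)$.

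Combining the two cases gives $Z \models (\exists y)\psi(\mathbf{n},y)$ a.s., and intersecting over the countable collection of instantiations $(\varphi,\mathbf{n})$ completes the proof that $Z \models T$ a.s. I do not expect a substantive obstacle; the only real bookkeeping is verifying measurability of $W$, of $B(\vv)$, and of the map $\vv \mapsto m(B(\vv))$ (so that the conditional-probability argument on $\{\aa \notin W\}$ is well-posed), all of which follow routinely from $\psi$ being quantifier-free, $\PP$ being a Borel $L$-structure, and Fubini's theorem.
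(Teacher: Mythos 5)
Your proof is correct and follows essentially the same route as the paper's: fix a pithy $\Pi_2$ sentence and a tuple of natural numbers, split on whether the corresponding random tuple in $\PP$ has an internal witness, and in the external case use independence (Borel--Cantelli) to find a witness among the remaining $m$-i.i.d.\ samples. Your explicit attention to the a.s.\ distinctness of the samples (needed so that equality atoms in $\psi$ transfer correctly from $\PP$ to $\F_\PP(\AA)$) and to the measurability of $W$, $B(\vv)$, and $\vv \mapsto m(B(\vv))$ is a small but genuine tightening of details the paper leaves implicit.
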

\begin{proof}

Let $\AA = (a_i)_{i \in \w}$ be an $m$-i.i.d.\ sequence of elements of
$\PP$.  Recall that by the proof of Lemma~\ref{continuouslemma},
$\mu_{(\PP,m)}$ is the distribution of the random
structure $\F_{\PP}(\AA)$, and so we must show that $\F_{\PP}(\AA) \models T$ a.s.  Because $T$ is countable, it suffices by countable additivity to show that for any sentence $\varphi \in T$, we have $\F_{\PP}(\AA) \models \varphi$ a.s.

Recall that $T$ is a countable pithy $\Pi_2$ theory.  Suppose $(\forall \xx)(\exists y)\psi(\xx,y) \in T$, and let $k = |\xx|$ (which may be $0$).  Our task is to show that, with probability one,
\[
\F_{\PP}(\AA) \models (\forall \xx)(\exists y)\psi(\xx,y)  .
\]
Fix $t_1\cdots t_k\in \Naturals$.  We will show that, with probability one,
\begin{equation}
\F_{\PP}(\AA) \models (\exists y) \psi(t_1\cdots t_k, y).\tag{$\dagger$}
\label{mainclaim}
\end{equation}
Consider the random tuple $a_{t_1}\cdots a_{t_k}$. Because $\PP$ strongly witnesses $T$, by Definition~\ref{Twitness} it is \emph{surely} the case that either
\begin{itemize}
\item[\emph{(i)}]
for some $\ell$ such that $1\le \ell \le k$, 
the random real $a_{t_\ell}$ is an internal witness for $(\exists y)
\psi(a_{t_1} \cdots a_{t_k}, y)$, i.e.,
\[
\PP \models \psi(a_{t_1} \cdots a_{t_k}, a_{t_\ell}),
\]
\end{itemize}
\vspace*{-3pt}
or else 
\begin{itemize}
\item[\emph{(ii)}] the (random) set 
of 
witnesses for $(\exists y) \psi(a_{t_1} \cdots a_{t_k}, y)$
has positive measure, i.e.,
\[
m\bigl( \{ b\in\Reals \st    \PP \models \psi(a_{t_1} \cdots a_{t_k}, b) \} \bigr) > 0 .
\]
\end{itemize}

In case $\emph{(i)}$, we have
\[
\F_{\PP}(\AA) \models \psi(t_1 \cdots t_k, t_\ell),
\]
where $\ell$ is as above, and so \eqref{mainclaim} holds surely.

Now suppose case \emph{(ii)} holds, and condition on 
$a_{t_1} \cdots a_{t_k}$. 
Then
\[
\beta \defas
m\bigl( \{ b\in\Reals \st    \PP \models \psi(a_{t_1} \cdots a_{t_k}, b)
\} \bigr)
\]
is a positive constant.
For each $n\in\Naturals$ 
not among $t_1, \ldots, t_k$, the random element $a_n$ is $m$-distributed,
and so  the 
events
\[
\PP \models \psi(a_{t_1} \cdots a_{t_k}, a_n)
\]
each have probability $\beta$.
These events are also mutually independent for such $n$,
and so
with probability one, there is some $s \in \Naturals$ for which
\[
\F_{\PP}(\AA) \models \psi(t_1 \cdots t_k, s).
\]
Therefore, in this case, \eqref{mainclaim} holds almost surely.
\end{proof}

Finally, we show that given a Borel $L$-structure strongly witnessing $T$, we can construct an invariant measure concentrated on the set of models of $T$ in $\Model$.

\begin{corollary}
\label{stronglywitnessingcorollary}
Let $T$ be a countable pithy $\Pi_2$ theory of $\Lwow(L)$, and let $\PP$ be a samplable Borel $L$-structure.
Suppose that $\PP$
strongly witnesses $T$.  Then there is an invariant measure on $\Model$ that is concentrated on the set of structures in $\Model$ that are models of $T$.
\end{corollary}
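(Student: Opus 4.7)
The plan is to combine the preceding two results in a very direct way, applied to any sufficiently nice measure on $\Reals$. The corollary is essentially a packaging of what has already been established: strongly witnessing is exactly the condition that removes the need to specify $m$ in advance, so once $\PP$ is given we have the freedom to pick any $m$ convenient for the sampling argument.

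First, I would fix a probability measure $m$ on $\Reals$ that is both continuous (nonatomic) and nondegenerate; for example, one can take $m$ to be the standard Gaussian, or the uniform measure on $[0,1]$ extended by zero to $\Reals$. Such an $m$ exists and clearly has both properties. Because $\PP$ strongly witnesses $T$, Definition~\ref{Twitness} immediately gives that the pair $(\PP, m)$ witnesses $T$.

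Next, consider the measure $\mu_{(\PP, m)}$ on $\Model$ introduced in Definition~\ref{mupm}. The earlier lemma establishing that $\mu_{(\PP,m)}$ is an invariant probability measure on $\Model$ applies to any probability measure $m$ on $\Reals$, so in particular $\mu_{(\PP,m)}$ is invariant under the logic action. Moreover, since $m$ was chosen to be continuous and $(\PP,m)$ witnesses $T$, Theorem~\ref{BorelLStructuresWitnessingTLeadToInvariantMeasures} gives that $\mu_{(\PP,m)}$ is concentrated on the set of structures in $\Model$ that are models of $T$. Combining these two facts yields the desired conclusion, with $\mu_{(\PP,m)}$ serving as the invariant measure.

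There is no real obstacle here, because all the work has already been done: strongly witnessing was designed precisely so that one can freely choose $m$, and the two preceding results separately supply invariance and concentration on models of $T$. The only thing to verify is that a single $m$ can be chosen to meet both the continuity hypothesis of Theorem~\ref{BorelLStructuresWitnessingTLeadToInvariantMeasures} and the nondegeneracy hypothesis of Definition~\ref{Twitness}, which is trivial.
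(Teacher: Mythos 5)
Your proof is correct and takes exactly the same route as the paper's: pick a single probability measure $m$ on $\Reals$ that is both continuous and nondegenerate, note that strong witnessing then gives $(\PP,m)$ witnessing $T$, and apply Theorem~\ref{BorelLStructuresWitnessingTLeadToInvariantMeasures} together with the invariance lemma for $\mu_{(\PP,m)}$. One small slip: the uniform measure on $[0,1]$ extended by zero to $\Reals$ is \emph{not} nondegenerate in the paper's sense (nonempty open intervals outside $[0,1]$ have measure zero), so that example should be dropped; your Gaussian example is fine, as is the Cauchy distribution used in the paper.
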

\begin{proof}
Let $m$ be a nondegenerate probability measure on $\Reals$ that is continuous (e.g., a Gaussian or Cauchy distribution).  
By Lemmas~\ref{mu-is-invariant}
and \ref{continuouslemma},
$\mu_{(\PP,m)}$ 
is an
invariant measure,
and
by Theorem~\ref{BorelLStructuresWitnessingTLeadToInvariantMeasures}, 
it is 
concentrated on 
the set of models of $T$ in $\Model$.
\end{proof}

Our constructions of invariant measures all employ 
a samplable Borel $L$-structure
$\PP$ that strongly witnesses a countable pithy $\Pi_2$ theory $T$.  We note that the machinery developed in \S\ref{WitnessesT} could have been used to build invariant measures via the substantially weaker condition that, for a given probability measure $m$, for
each 
expression $(\exists y) \psi(\aa, y)$ for which $(\forall \xx)(\exists y) \psi(\xx, y)\in T$,
there are witnesses in $\PP$
only for $m$-almost all tuples $\aa$.  However, in this case, $\PP$ need not be a model of $T$ (in contrast to Lemma~\ref{SubmodelsSatisfyingT}), nor need $(\PP,m)$ even witness $T$.  Also, while we defined the notion of witnessing only for $L$-structures on $\Reals$, we could have developed a similar 
notion
for $L$-structures whose underlying set is an $m$-measure one subset of $\Reals$.

Next we
find conditions that allow us to construct samplable Borel $L$-structures 
that strongly witness $T$.  When all countable models of $T$ 
are isomorphic to
a particular countable infinite $L$-structure $\cM$, we
will thereby obtain an invariant measure concentrated on the isomorphism class of $\cM$ in $\Model$.

\subsection{Duplication of quantifier-free types}\label{duplication}
\ \\ \indent
We now introduce the notion of a theory having \emph{duplication of quantifier-free types}.  We will see in \S\ref{construction} that when $L$ is a countable relational language and $T$ is a countable pithy $\Pi_2$ theory of $\Lwow(L)$, duplication of quantifier-free types guarantees the existence of a samplable Borel $L$-structure strongly witnessing $T$.  However, for the definitions and results in \S\ref{duplication}, we do not require $L$ to be relational.

We first recall the notion of a \emph{quantifier-free type}, which can be thought of as giving the full description of 
the subbasic formulas
that 
could 
hold among the elements 
of some tuple.  In first-order logic this is typically achieved with an infinite consistent set of formulas, but in our infinitary context, a single satisfiable formula of $\Lwow(L)$ suffices.

\begin{definition}
\label{newTypeDef}
Suppose $\xx$ is a finite tuple of variables.  Define a \defn{(complete) quantifier-free type $p(\xx)$ of $\Lwow(L)$} to be a quantifier-free formula in $\Lwow(L)$, whose free variables are precisely those in $\xx$, such that 
the sentence $(\exists \xx)p(\xx)$ has a model, and such that  for
every quantifier-free formula $\varphi(\xx) \in \Lwow(L)$, either
\[
\models  p(\xx) \rightarrow \varphi(\xx)
\text{~~\ \ \ \ ~~~~or~~~\ \ \ \ ~~~}
\models  p(\xx) \rightarrow \neg \varphi(\xx)
\]
holds.
\end{definition}

Note that because
we require this condition only for
quantifier-free
formulas $\varphi$, it suffices for $p(\xx)$ to be a quantifier-free formula such that whenever $\zeta(\xx)$ is a subbasic formula,
and whenever $\yy$ is a tuple of length equal to the number of free variables of $\zeta$ such that every variable of $\yy$ is in the tuple $\xx$, either
\[
\models  p(\xx) \rightarrow \zeta(\yy)
\text{~~\ \ \ \ ~~~~or~~~\ \ \ \ ~~~}
\models p(\xx) \rightarrow \neg \zeta(\yy).
\]

By taking countable conjunctions, 
we see that every tuple in every
$L$-structure satisfies some complete
quantifier-free type (in the sense of Definition~\ref{newTypeDef}).
This justifies our use of the word \emph{type} for a single
formula.

We will often call complete quantifier-free types of $\Lwow(L)$ simply \emph{quantifier-free types}, and sometimes refer to them as \emph{quantifier-free $\Lwow(L)$-types}.  Although we have required that a quantifier-free type $p(\xx)$ have free variables precisely those in $\xx$, when there is little possibility of confusion we will sometimes omit the tuple $\xx$ and refer to the quantifier-free type as $p$.

We say that a quantifier-free type $p(\xx)$ is \defn{consistent with} a theory $T$
when $T \cup (\exists \xx) p(\xx)$ has a model.  A tuple $\aa$ in an
$L$-structure $\cM$, where $|\aa| = |\xx|$, is said to  \defn{realize} the
quantifier-free type $p(\xx)$ when $\cM\models p(\aa)$; in this case we say
that $p(\xx)$ is \emph{the} quantifier-free type of $\aa$ (as it is unique up to
equivalence).

Suppose that $p(\xx)$ and $q(\yy)$ are quantifier-free types, where $\yy$ is a tuple of variables containing those in $\xx$.  We say that $q$ \defn{extends} $p$, or that $p$ is the \defn{restriction} of $q$ to $\xx$, when $\models q(\yy)\to p(\xx)$.

\begin{definition}
A quantifier-free type $p(x_1, \ldots, x_n)$ is said to be \defn{non-redundant} when it implies the formula $\bigwedge_{1\le i< j\le n} (x_i \neq x_j)$.
\end{definition}

Note that every quantifier-free type is equivalent to the conjunction of a non-redundant
quantifier-free type and equalities of  variables, as follows.
Suppose $q$ is a quantifier-free type. Let $S$ be the set containing those formulas $(u=v)$,
for $u$ and $v$ free variables of $q$, such that $q$ implies $(u=v)$.
Then $q$ is equivalent to 
\[r \And \bigwedge_{\eta\in S} \eta\]
for some non-redundant quantifier-free type $r$.

The following notion will
be used in the 
stages of Construction~\ref{themainconstruction}
that we call ``refinement''.

\begin{definition}
\label{dupdef}
We say that a theory $T$ has \defn{duplication of quantifier-free types}
when, for every non-redundant quantifier-free type $p(x, \zz)$
consistent with $T$,
there
is a non-redundant quantifier-free type $q(x,y,\zz)$
consistent with $T$ such
that
\[
\models q(x,y,\zz) \rightarrow \bigl ( p(x,\zz) \And p(y, \zz)\bigr),
\]
where $p(y, \zz)$ denotes the quantifier-free type $p(x,\zz)$ with all instances of the
variable $x$ replaced by the variable $y$.
\end{definition}

Equivalently,
when $T$ does not have duplication of quantifier-free types, there is some non-redundant quantifier-free type $p(x,\zz)$ consistent with $T$ and some model $\cM$ of $T$ containing a tuple $(a,\bb)$ realizing $p$ such that the only way for a tuple $(a', \bb)$ in $\cM$ to also realize $p$ is for $a'$ to equal $a$.


\subsection{Construction of a Borel $L$-structure strongly witnessing a theory}\label{construction}
\ \\ \indent
Throughout \S\ref{construction}, let
$L$ be a countable \emph{relational} language
and
let $T$ be a countable pithy $\Pi_2$ theory of $\Lwow(L)$ that has
duplication of quantifier-free types.
We now construct a Borel $L$-structure $\PP$ that
strongly witnesses $T$.
This construction is inspired by
\cite[Theorem~5]{MR2724668}, in which Petrov and Vershik build an analogous
continuum-sized structure realizing the theory of the Henson graph
$\Henson$.
We begin with an informal description.

We construct $\PP$ by assigning, for every increasing tuple of reals,
the quantifier-free type that it realizes.  This will determine the quantifier-free type of every tuple of reals, and hence determine the structure $\PP$ on $\Reals$, as we now explain.

\begin{definition}
Given (strictly) increasing tuples of reals $\cc$ and $\dd$, we say that 
$\cc$ \defn{isolates} $\dd$
when every left-half-open interval whose endpoints are consecutive entries
of $\cc$ contains exactly one entry of $\dd$, i.e., for $0 \le j < \ell$, 
\[ d_j \in (c_j, c_{j+1}],\]
where $\cc = c_0\cdots c_{\ell}$ and $\dd = d_0\cdots d_{\ell-1}$.
\end{definition}
For example, the 
triple $(1,2,5)$ isolates the pair $(2,3)$.

Let $\cc$ be an arbitrary tuple of reals (not necessarily in increasing
order, and possibly with repetition). 
The quantifier-free type of 
$\cc$ 
is determined by the
quantifier-free type of 
the increasing tuple containing all reals of $\cc$
along with the relative ordering of the entries of
$\cc$.
For example, let $p(x,y,z)$ be the quantifier-free type of $(1,2,5)$. Then 
the quantifier-free type of $(2,2,1,5)$ is the unique (up to equivalence) formula
$q(y,w,x,z)$ implied by $p(x,y,z) \And (y=w)$.
Hence in order to determine the quantifier-free type of every tuple of
reals, and thereby define a structure
$\PP$ on $\Reals$, it suffices  to assign the
quantifier-free types of all increasing tuples.

In Construction~\ref{themainconstruction},
we will build our Borel $L$-structure $\PP$ inductively, making sure that
$\PP$
strongly witnesses $T$. At stage $i\ge 0$ of the
construction we will 
define
the following quantities:
\begin{itemize}
\item $\rr_i = (r^i_0, \ldots, r^i_{|\rr_i|-1})$, the increasing tuple of all rationals mentioned by the end of stage $i$;

\item $p_i$, the quantifier-free type of $\rr_i$; and

\item $\vv_i = (v^i_0, \ldots, v^i_{|\rr_i|})$, an increasing tuple of
irrationals 
that isolates
$\rr_i$.
\end{itemize}
We call the left-half-open intervals
\[
\bigl(-\infty, v^i_0\bigr],\, \bigl(v^i_0, v^i_1\bigr],\,
\ldots, \,\bigl(v^i_{|\rr_i|-1}, v^i_{|\rr_i|}\bigr], \,\bigl( v^i_{|\rr_i|},
\infty\bigr)
\]
the \emph{intervals
determined by $\vv_i$}.

We will 
define 
the $\vv_i$ so that 
they form
a nested sequence of tuples 
of irrationals such that
every (increasing) tuple 
that 
a given $\vv_i$ isolates,
including $\rr_i$, is assigned the same quantifier-free type $p_i$ at stage $i$.
The sequence of tuples $\{\vv_j\}_{j\in\omega}$ will be 
such that the set of reals $\bigcup_{j\in\omega} \vv_j$ is dense in
$\Reals$. Thus
for every tuple
of reals $\aa$, all of its entries 
occur
in some increasing tuple 
isolated by
$\vv_i$ for some $i$, and so its quantifier-free type will eventually be defined. 
This motivates the following definitions.

\begin{definition}
For each stage $i$, define $\B_i$ to be the set of tuples $\cc \in \Reals$ such that there is some increasing tuple $\dd \in \Reals$ that 
$\vv_i$ isolates and that
contains every entry of $\cc$. 
\end{definition}

Note that $\B_i \subseteq \B_{i'}$ for $i\le i'$, and that
$\bigcup_{j\in\omega} \B_j$ contains every tuple of reals.

By the end of stage $i$, we will have defined the quantifier-free type of every tuple that 
$\vv_{i}$
isolates, and hence by extension, of every tuple in $\B_i$.
For example, if $(1,2,5)\in\B_i$, then $(2,2,1,5)\in\B_i$ also,
and by the end of stage $i$ the
quantifier-free type of $(1,2,5)$ will be determined explicitly, and of $(2,2,1,5)$
implicitly, as described above.

Next we define an equivalence relation on $\B_i$, which we call
$i$-equivalence.
By the end of stage $i$, tuples in $\B_i$ that are $i$-equivalent
will have been assigned the same quantifier-free type.

\begin{definition}
Let $i\ge0$.  We say that two tuples $\cc, \dd \in\B_i$ of the same length $\ell$ are \defn{$i$-equivalent}, denoted $\cc \approx_i \dd$, if for all $j \le \ell$, the $j$th entry of $\cc$ and $j$th entry of $\dd$ both fall into the same interval determined by $\vv_i$.  Any two elements of $\B_i$ of different lengths are not $i$-equivalent.
\end{definition}

For example, each left-half-open interval $(v^i_j, v^i_{j+1}]$
determined by $\vv_i$, where
\linebreak $0 \le j < |\rr_i|$, is the $i$-equivalence class of any element
in the
interval.

Note that 
$i'$-equivalence 
refines $i$-equivalence 
for $i'> i$, in the sense that given two elements of $\B_i$ that are not $i$-equivalent, they are also not $i'$-equivalent.
Furthermore, our construction will be such that
for any two distinct $i$-equivalent tuples in $\B_i$, there is some
$i'> i$ for which
they are not $i'$-equivalent.

In the construction,
we will assign 
quantifier-free types in such a way that
$\PP$ strongly witnesses $T$. Specifically, for every $\aa\in
\B_i$ and
every pithy $\Pi_2$ sentence $(\forall
\xx)(\exists y)\psi(\xx,y) \in T$,
if there is no internal witness for $(\exists
y)\psi(\aa,y)$, then at some stage $i' > i$ we will build a left-half-open
interval $I$, disjoint from $\B_i$, consisting of $i'$-equivalent elements all of which are
external witnesses for
$(\exists y)\psi(\aa,y)$.
This will imply that for any 
$\cc \approx_{i'} \aa$, every $b\in I$ will
witness $(\exists y)\psi(\cc,y)$, since $\aa b \approx_{i'} \cc b$ and 
$i'$-equivalent tuples realize the same quantifier-free type.

We will build these external witnesses in the even-numbered
stages of the construction;
we call this process \emph{enlargement}, because we extend the portion of the real line to which we assign quantifier-free types.  In the odd-numbered stages, we perform \emph{refinement} of intervals, so that distinct $i$-equivalent tuples in $\PP$ are eventually not $i'$-equivalent for some $i'>i$; this ensures that each expression $(\exists y)\psi(\aa,y)$ will be witnessed with respect to all possible $\aa\in \PP$.
In fact, as we have noted earlier, by the end of stage $i$, we will have assigned the quantifier-free type of every tuple in $\B_i$ in such a way that $i$-equivalent tuples have the same quantifier-free type.

\begin{construction}
\label{themainconstruction}
Fix an enumeration  $\{\varphi_i\}_{i\in\w}$ of the sentences of
$T$ such that every sentence of $T$ appears infinitely often.
Because $T$ is a pithy $\Pi_2$ theory, for each $i$,  the sentence $\varphi_i$ 
is of the form
\[
(\forall \xx)(\exists y)\psi_{i}(\xx, y), 
\]
where $\psi_i$ is a quantifier-free formula whose free variables  are
precisely $\xx y$, all distinct, and where $\xx$ is possibly empty.  Consider the induced enumeration  $\{\psi_i\}_{i\in\w}$, and for each $i$, let $k_i$ be one less than the number of free variables of $\psi_i$.
Also fix an enumeration $\{q_i\}_{i \in\w}$ of the rationals.
\end{construction}
\vspace{-3pt}
We now give the inductive construction.
For a diagram, see Figure~\ref{illustration}.
The key inductive property is that at the end of each stage $i$, the
quantifier-free type
$p_i$ is consistent with $T$, extends $p_{i-1}$ (for $i\ge 1$), and  
is the (non-redundant) quantifier-free type of every tuple that 
$\vv_i$ isolates,
including $\rr_i$.

\vspace{5pt}
\noindent{\textbf{Stage $\mathbf{0}$:}}
Set $\rr_{0}$ to be the tuple $(0)$, let $p_{0}$ be any quantifier-free unary
type consistent with $T$, and set $\vv_{0}$ to be the pair
$\bigl (-\sqrt{2}, \sqrt{2}\bigr)$.

\vspace{5pt}
\noindent {\textbf{Stage $\mathbf{2i+1}$ (Refinement):}}
In 
stage $2i+1$,
we will construct a tuple $\rr_{2i+1}$ of rationals, a tuple
$\vv_{2i+1}$ of irrationals, and a non-redundant quantifier-free type $\pt$
consistent with $T$ in such a way that these extend $\rr_{2i}$, $\vv_{2i}$, and $p_{2i}$, respectively.
In doing so, we will refine the intervals determined by $\vv_{2i}$, and assign the
quantifier-free type of every increasing tuple that 
$\vv_{2i+1}$ isolates. 
By extension, this will determine the quantifier-free type of every
tuple in $\B_{2i+1}$, i.e.,
of
every tuple $\cc$ 
all of whose entries are contained in some tuple that 
$\vv_{2i+1}$ isolates,
in such a way that $(2i+1)$-equivalent tuples are assigned the same quantifier-free
type.

Define $\rr_{2i+1}$  to be the increasing tuple consisting of the
entries of $\rr_{2i}$ along with $q_i$.
We need to
define the quantifier-free type $p_{2i+1}$ of  $\rr_{2i+1}$  so that it extends $p_{2i}$, the quantifier-free type of $\rr_{2i}$.
There are three cases, depending on the value of $q_i$.

\textbf{Case 1:} The ``new'' rational $q_i$ is already an entry of $\rr_{2i}$. In this case,
there is nothing to be done, as 
$\rr_{2i+1} = \rr_{2i}$, and so we set
$p_{2i+1} \defas p_{2i}$.

\textbf{Case 2:}  
We have $q_i \in (-\infty, v^{2i}_0]\, \cup\, (v^{2i}_{|\rr_{2i}|},
\infty)$,
i.e.,
the singleton tuple $(q_i)$ is not in $\B_{2i}$.
If $q_i < v^{2i}_0$, 
let $p_{2i+1}(x_0, \dots, x_{|\rr_{2i}|})$ be any 
non-redundant
quantifier-free type consistent with $T$ that implies $p_{2i}(x_1, \dots,
x_{|\rr_{2i}|})$. Similarly,
if $q_i > v^{2i}_{|\rr_{2i}|}$,
let
$p_{2i+1}(x_0, \dots, x_{|\rr_{2i}|})$ 
be such that it implies $p_{2i}(x_0, \dots, x_{|\rr_{2i}|-1})$.
Such quantifier-free types $p_{2i+1}$ must exist, because 
$p_{2i}$ is consistent with $T$ and we have not yet
determined the set of relations that hold of any
tuple that has an entry lying outside the interval
$(v^{2i}_0,  v^{2i}_{|\rr_{2i}|}]$.

\textbf{Case 3:} Otherwise.
Namely, $q_i \in (v^{2i}_j , v^{2i}_{j+1}]$
for some $j$ such that $0\le j < |\rr_{2i}|$, and $q_i \neq
r^{2i}_j$. 
Note that 
$\vv_{2i}$ isolates each of
the tuples 
\[
r^{2i}_0 \cdots r^{2i}_{|\rr_{2i}|-1} \qquad \text{and} \qquad
r^{2i}_0\cdots r^{2i}_{j-1} \,q_i\, r^{2i}_{j+1}\cdots r^{2i}_{|\rr_{2i}|-1}
\]
and
hence 
by our construction,
the tuples both
satisfy the same quantifier-free type $p_{2i}$. 
By our assumption that $T$ has duplication of quantifier-free types, and because $p_{2i}$
is non-redundant, there must be a
non-redundant quantifier-free type
$p_{2i+1}(x_0, \dots, x_{|\rr_{2i}|})$ 
consistent with $T$ that implies 
\[
p_{2i}(x_0, \ldots, x_j, x_{j+2}, \ldots, x_{|\rr_{2i}|})
\And p_{2i}(x_0, \ldots, x_{j-1}, x_{j+1}, \ldots, x_{|\rr_{2i}|}).
\]

Whichever case holds,
now let $\vv_{2i+1}$ be any increasing tuple of irrationals that contains
every entry of $\vv_{2i}$ and 
isolates
$\rr_{2i+1}$.  Consider the
subtuple of variables $\zz \subseteq x_0 \cdots x_{|\rr_{2i}|}$ that
corresponds to the positions of the entries of $\rr_{2i}$ within
$\rr_{2i+1}.$ In each case above, $p_{2i+1}(x_0, \ldots, x_{|\rr_{2i}|})$ is a non-redundant quantifier-free type consistent with $T$ whose restriction to $\zz$ is $p_{2i}(\zz)$.  Hence we may assign the quantifier-free type of every increasing tuple that 
$\vv_{2i+1}$ isolates
(including $\rr_{2i+1}$) to be $p_{2i+1}$.  By extension, this determines the quantifier-free type of every tuple in $\B_{2i+1}$.

\vspace{5pt}
\noindent {\bfseries Stage $2i+2$ (Enlargement)}:
In stage $2i+2$, we will construct a 
tuple $\rr_{2i+2}$ of rationals, a tuple
${\vv}_{2i+2}$ of irrationals, and a non-redundant quantifier-free type $p_{2i+2}$
consistent with $T$ 
in such a way that these
extend $\rr_{2i+1}$,
$\vv_{2i+1}$, and $\pt$, respectively.
As we do so, we will enlarge the portion of the real line 
to which we assign quantifier-free types. At the end of the stage we will have
determined the quantifier-free type of every tuple in $\B_{2i+2}$, in such
a way that $(2i+2)$-equivalent tuples are assigned the same quantifier-free
type.

Our goal is to provide witnesses for $(\exists y) \psi_i(\aa, y)$,
where $\psi_i(\xx,y)$ is from our enumeration above,
for each $k_i$-tuple of reals
$\aa\in\B_{2i+1}$, i.e., each $k_i$-tuple whose quantifier-free type is
determined by $\pt$.
We extend the non-redundant quantifier-free type $\pt(x_0, \ldots, x_{|\rr_{2i+1}|-1})$ to a
non-redundant quantifier-free type $p_{2i+2}(x_0, \ldots,$  $x_{|\rr_{2i+1}|-1}, \ww)$ 
so that 
for every 
tuple $\aa$ such that 
$(\exists y) \psi_i(\aa, y)$ has no
internal witness, 
there is an entry of $\ww$
whose realizations provide external witnesses.

Let $\{\zz_\ell\}_{1\le \ell \le N_i}$ be an enumeration of those tuples of variables 
(possibly with repetition)
of length $k_i$
all of whose entries are among $x_0, \ldots, x_{|\rr_{2i+1}|-1}$.
We now define, by induction on $\ell$, intermediate non-redundant
quantifier-free types 
\[
s_\ell(x_0, \ldots, x_{|\rr_{2i+1}|-1}, \uu_\ell),
\] 
for $0 \le \ell < N_i$, that are
consistent with $T$ and
such that each $s_{\ell+1}$ implies $s_\ell$.
As we step through the tuples of variables of length $k_i$, if we have
already provided a ``witness'' for
$(\exists y) \psi_i(\zz_{\ell+1}, y)$  then we will do nothing; otherwise, we will extend
our quantifier-free type to provide one, as we now describe.

Let $s_0 \defas p_{2i+1}$, and let $\uu_0$ be the empty tuple of
variables. Now consider step \linebreak $\ell < N_i$ of the induction, so
that $s_0, \ldots, s_\ell$ have
been defined. If there is a variable $t$ among $x_0 \cdots
x_{|\rr_{2i+1}|-1}$ or among $\uu_\ell$
such that 
$s_\ell(x_0, \ldots, x_{|\rr_{2i+1}|-1}, \uu_\ell)$
implies $\psi_i(\zz_{\ell+1}, t)$, then let $s_{\ell+1} \defas s_\ell$ and
$\uu_{\ell+1} \defas \uu_\ell$.
If not, then because $s_\ell$ is consistent with $T$ and 
$(\forall \xx)(\exists y)\psi_i(\xx, y) \in T$,
there must be some non-redundant quantifier-free type $s_{\ell+1}$ consistent with $T$  
that has one more variable, $w_{\ell+1}$, than $s_\ell$, 
such that $s_{\ell+1}$ implies both $s_\ell$ and $\psi_i(\zz_{\ell+1},
w_{\ell+1})$; in this
case, let $\uu_{\ell+1} \defas \uu_\ell w_{\ell+1}$.
Let 
\[
p_{2i+2}(x_0, \ldots, x_{|\rr_{2i+1}|-1}, \ww) \defas s_{N_i}(x_0, \ldots,
x_{|\rr_{2i+1}|-1}, \ww),
\]
where 
$\ww \defas \uu_{N_i}$.
Note that $p_{2i+2}(x_0, \ldots, x_{|\rr_{2i+1}|-1}, \ww)$ 
is a non-redundant quantifier-free type that is consistent with $T$
and
extends $p_{2i+1}(x_0, \ldots, x_{|\rr_{2i+1}|-1})$.

Next, choose $|\ww|$-many rationals greater than all entries of $\rr_{2i+1}$, and
define $\rr_{2i+2}$ to be the increasing tuple consisting of
$\rr_{2i+1}$ and these new rationals.
Let $\vv_{2i+2}$ be an arbitrary increasing tuple of irrationals
that contains every entry of
$\vv_{2i+1}$ and 
isolates
$\rr_{2i+2}$.
Finally,  
for every increasing tuple that 
$\vv_{2i+2}$ isolates
(including
$\rr_{2i+2}$), declare its quantifier-free type to be $p_{2i+2}$.
As with the refinement stages, this determines by extension the quantifier-free type of every tuple in $\B_{2i+2}$,
i.e., of every tuple $\cc$ 
all of whose entries are contained in some tuple that 
$\vv_{2i+2}$ isolates.
In particular, for any tuple 
$\aa \in \B_{2i+1}$ 
of length $k_i$ such that $(\exists y)\psi_i(\aa,y)$ does not have internal witnesses,
we have constructed a left-half-open interval $(v^{2i+2}_j,
v^{2i+2}_{j+1}]$, for some $j$ such that $0 \le j < |\rr_{2i+2}|$,
consisting of external witnesses for
$(\exists y)\psi_i(\aa,y)$.
This ends the stage, and the construction.
\hfill\qed

\begin{figure}[h]
{
\hspace{-20pt}\!\!
\includegraphics[width=1.05\linewidth]{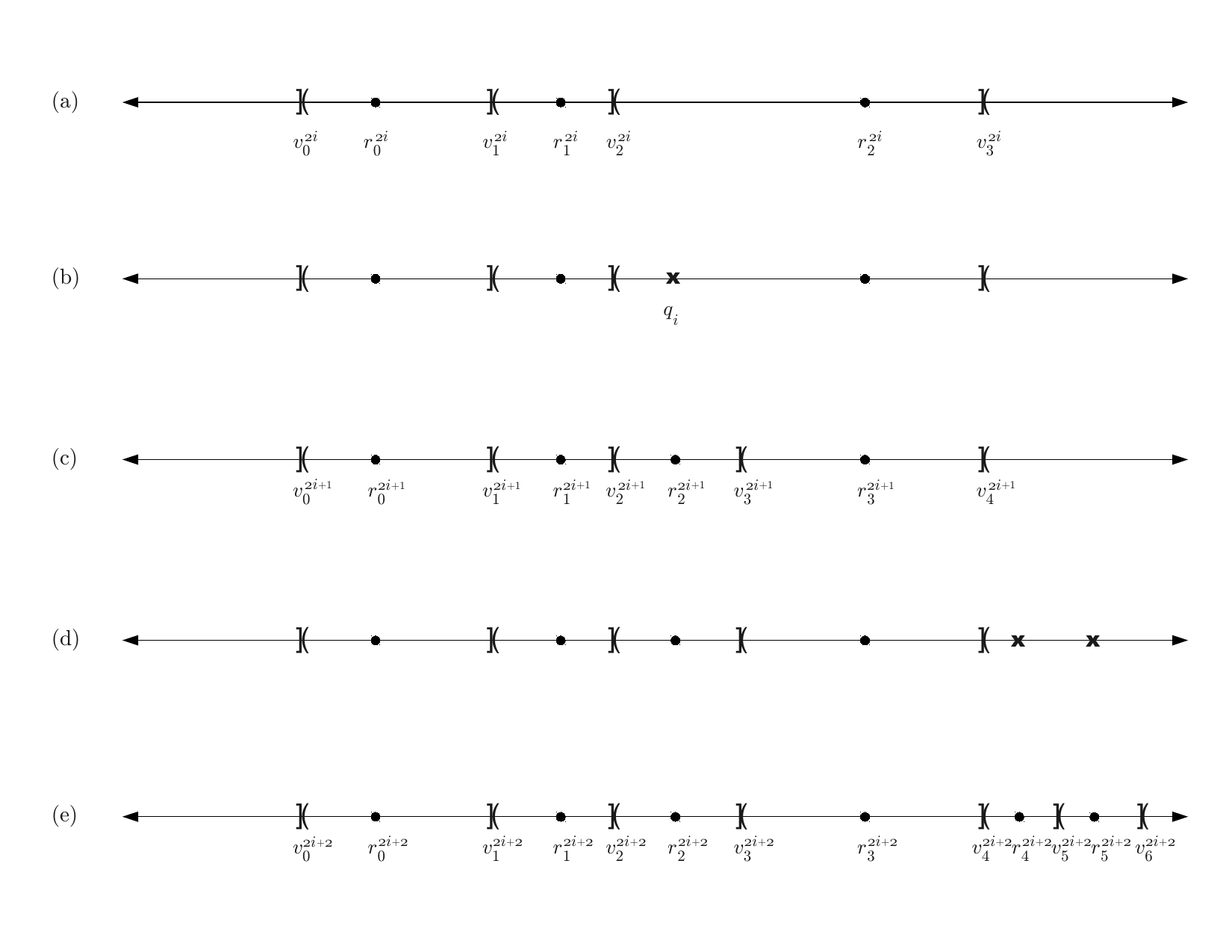}}
\caption{An illustration of Construction~\ref{themainconstruction}.\newline
(a) Suppose that we start stage $2i+1$ with the tuple
$\rr_{2i} = \bigl(r^{2i}_0,
r^{2i}_1, r^{2i}_2\bigr)$ of rationals and the tuple $\vv_{2i} = \bigl(v^{2i}_0, v^{2i}_1,
v^{2i}_2, v^{2i}_3\bigr)$ of
irrationals.\newline
(b) Suppose that the rational $q_i$ falls between $v^{2i}_2$ and $r^{2i}_2$.\newline
(c) By the end of stage $2i+1$, the rational $q_i$ has become $r^{2i+1}_2$, and the
rationals and irrationals to its right are reindexed.\newline
\indent (d) Suppose that in stage $2i+2$ we need two intervals of external witnesses 
for $(\exists y)\psi_i(\aa, y)$ 
as $\aa$ ranges among $k_i$-tuples all of whose entries are
entries of
$\rr_{2i+1}$. 
Then we select two new rational witnesses
$r^{2i+2}_4$ and $r^{2i+2}_5$  to the right of $\vv_{2i+1}$.
\newline
\indent (e) 
At the end of stage $2i+2$, we choose 
irrational interval boundaries $v^{2i+2}_5$ (between $r^{2i+2}_4$ and~$r^{2i+2}_5$)
and $v^{2i+2}_6$ (to the right of $r^{2i+2}_5$).
}
\label{illustration}
\end{figure}

We now verify that this construction produces a structure with the desired
properties.

\begin{theorem}
\label{ModelsWitnessingT}
Let $L$ be a countable relational language, 
let $T$ be a countable pithy $\Pi_2$ theory of $\Lwow(L)$ that has
duplication of quantifier-free types, and let $\PP$ be the 
$L$-structure obtained via Construction~\ref{themainconstruction}.
Then $\PP$ is 
a samplable Borel $L$-structure 
that strongly witnesses $T$.
\end{theorem}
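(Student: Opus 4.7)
The strategy has two halves: (i) verify that $\PP$ is a Borel $L$-structure, and (ii) verify that $\PP$ strongly witnesses $T$.

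For (i), fix a relation symbol $R \in L$ of arity $j$. Whether $\PP \models R(\aa)$ depends only on the quantifier-free type of $\aa$, which, by the design of Construction~\ref{themainconstruction}, is assigned uniformly on each $i$-equivalence class within $\B_i$. An $i$-equivalence class of $j$-tuples is cut out by a pattern of equalities among coordinates together with a specification of which left-half-open interval of $\vv_i$ each distinct coordinate falls into, and is therefore Borel in $\Reals^j$. Hence the set of $\aa \in \B_i \cap \Reals^j$ on which $R$ holds is a finite union of Borel sets. Since $\bigcup_i \B_i$ contains every finite tuple of reals --- a consequence of the density of $\bigcup_i \vv_i$ in $\Reals$, which in turn follows because every rational eventually appears in $\rr_i$ via the enumeration $\{q_i\}$ and each $v^i_k$ separates consecutive entries of $\rr_i$ --- the whole set $\{\aa : \PP \models R(\aa)\}$ is a countable union of Borel sets, hence Borel.

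For (ii), fix a nondegenerate probability measure $m$ on $\Reals$, a sentence $(\forall \xx)(\exists y)\psi(\xx, y) \in T$, and a tuple $\aa \in \PP$ with $|\aa| = |\xx|$. Choose $j$ large enough that both $\aa \in \B_{2j+1}$ and $\varphi_j$ equals the given sentence --- such $j$ exists because the $\B_i$'s exhaust the finite tuples of reals and each sentence of $T$ occurs infinitely often in the enumeration $\{\varphi_i\}$. The entries of $\aa$ lie in specific intervals determined by $\vv_{2j+1}$, which identifies $\aa$, up to $(2j+1)$-equivalence, with a specific variable tuple $\zz_{\ell+1}$ in the enumeration used in stage $2j+2$. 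At step $\ell+1$ of that stage, the construction either locates some existing variable $t$ among $x_0, \ldots, x_{|\rr_{2j+1}|-1}$ or $\uu_\ell$ with $s_\ell \models \psi_j(\zz_{\ell+1}, t)$, or else introduces a new variable $w_{\ell+1}$ so that $s_{\ell+1} \models \psi_j(\zz_{\ell+1}, w_{\ell+1})$.

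If $t$ is itself an entry of $\zz_{\ell+1}$, the corresponding entry of $\aa$ is an internal witness, and Definition~\ref{Twitness}(i) holds. In the remaining possibilities, the witnessing variable ($t$ not in $\zz_{\ell+1}$, or $w_{\ell+1}$) designates a specific left-half-open interval --- of $\vv_{2j+1}$ if $t$ is an $x$-variable, and of $\vv_{2j+2}$ if $t \in \uu_\ell$ or for $w_{\ell+1}$. For any $b$ in that interval, the tuple $(\aa, b)$ has quantifier-free type equal to the restriction of $p_{2j+2}$ to the appropriate variables, and since $p_{2j+2} \models \psi_j(\zz_{\ell+1}, t)$ (or the analogue for $w_{\ell+1}$), we get $\PP \models \psi_j(\aa, b)$; the interval has nonempty interior, so its $m$-measure is positive, giving Definition~\ref{Twitness}(ii). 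The main delicacy is the bookkeeping: identifying $\aa$'s $(2j+1)$-equivalence class with the right $\zz_{\ell+1}$ and separating the internal- from external-witness possibilities based on whether the witness variable lies in $\zz_{\ell+1}$; once this is in place, the strong witnessing property follows directly from the design of the construction.
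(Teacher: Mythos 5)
Your proof is correct and follows essentially the same two-part strategy as the paper's: part (i) decomposes $\{\aa : \PP \models R(\aa)\}$ as the countable union over $i$ of Borel pieces built from $i$-equivalence classes, and part (ii) locates a late enough enlargement stage where the construction has already manufactured a witness interval. The paper's version of (ii) is slightly terser, fixing $\ell^*$ least with $\aa \in \B_{2\ell^*+1}$, assuming no internal witness, and directly invoking the fact --- stated at the end of Construction~\ref{themainconstruction} --- that stage $2\ell+2$ produced a left-half-open interval of external witnesses; you instead re-derive that fact by unwinding the inner induction over the variable tuples $\zz_{\ell+1}$, splitting on whether the witnessing variable falls inside $\zz_{\ell+1}$ (internal), is an old $x$-variable outside $\zz_{\ell+1}$, or is a freshly added $\uu$-variable (external), which is logically equivalent and a fair amount of correct bookkeeping, just more than was strictly needed once the construction's closing sentence is taken as given.
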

\begin{proof}


We first show that  $\PP$ is a Borel $L$-structure.
Fix a relation symbol $R \in L$, and let $k$ be the arity of
$R$.
We must show that $\{\aa\in\PP^k \st \PP \models R(\aa)\}$
is Borel.
For each 
$i \ge 0$ define
\[
\X_i \defas \bigl\{\aa\in\PP^k \st \PP\models R(\aa) \text{~and~} \aa \in \B_i
\bigr\}.
\]
Recall that by our construction, the set of reals $\bigcup_{j\in\omega}\vv_j$ is
dense in  
$\Reals$, and so every tuple of reals is in $\bigcup_{j\in\omega} \B_j$.
Therefore 
\[
\textstyle
\{\aa\in \PP^k \st \PP \models R(\aa)\} = \bigcup_{j \in \omega} \X_j.
\]
 In particular, it suffices to show that $\X_j$ is Borel for each $j$.  

Fix some $i\ge 0$, and note that 
for every $\aa,\aa' \in\B_i$ such that  $\aa \approx_i \aa'$,  we have 
\[
\PP \models R(\aa) 
\qquad \text{if and only if} \qquad
\PP \models R(\aa'),
\]
because $i$-equivalent tuples are assigned the same quantifier-free
type.
Furthermore,  for every $\aa\in \B_i$, the set
\[
\{ \cc \in  \B_i \st \cc \approx_i \aa \}
\]
is a
$k$-fold product of left-half-open intervals. 
Hence $\X_i$ is Borel,
and so $\PP$ is a Borel $L$-structure.
Moreover, $\PP$ is samplable because $L$ is relational.


We now show that $\PP$ strongly witnesses $T$.
Let $m$ be an arbitrary nondegenerate
probability measure on $\Reals$.
Consider a pithy $\Pi_2$ sentence
\[
(\forall \xx)(\exists y)\xi(\xx,y) \in T,
\]
and let $\aa$ be a tuple of reals such that $|\aa| = |\xx|$, where
$|\xx|$ could possibly be zero.

Suppose $(\exists y)\xi(\aa, y)$ does not have an internal witness. 
Let $\ell^*$ be the least stage such that $\aa \in \B_{2 \ell^* + 1}$.
Because each sentence of $T$ appears infinitely often in the enumeration
$\{\varphi_j\}_{j\in\omega}$, 
there is some $\ell\ge \ell^*$ such that 
$\varphi_\ell = (\forall \xx)(\exists y)\xi(\xx,y)$, and hence such
that
$\xi = \psi_\ell$.

Since $\aa\in\B_{2\ell+1}$, at stage $2\ell+2$ there is some real $b$ such that
$\PP \models \psi_\ell(\aa,b)$. 
Furthermore, we have ensured that there is a left-half-open interval
of reals
$b'$ such that 
$b' \approx_{2\ell+2} b$
and hence 
such that $\PP \models \psi_\ell(\aa,b').$ 
Because $m$ is nondegenerate, this collection of
external witnesses for $(\exists y) \xi(\aa,y)$ has
positive $m$-measure.
Hence $(\PP,m)$ witnesses $T$. As $m$ was an arbitrary nondegenerate
probability measure on $\Reals$, the Borel $L$-structure $\PP$ strongly witnesses $T$, as desired.
\end{proof}

\subsection{Invariant measures from trivial definable closure}\label{invariant}\ \\
\indent 
We are now ready to prove the positive direction of our main theorem, Theorem~\ref{maintheorem}.
We have seen, in \S\ref{construction},
that if a countable pithy $\Pi_2$ theory
$T$ in a countable relational language $L$ has duplication of quantifier-free types, then
there exists a samplable Borel $L$-structure strongly witnessing $T$. 
	We show below that when
	$T$ has a unique countable model $\cM$ (up to isomorphism),
	duplication of quantifier-free types is moreover implied by $\cM$ having trivial definable closure;
	we prove the converse for relational languages in Corollary~\ref{partialconverse}.
	In fact, we have the following stronger result.

\begin{lemma}\label{alephnought}
Let
$T$ be a countable theory of $\Lwow(L)$ such that every countable model of $T$ has trivial definable closure. Then $T$ has duplication of quantifier-free types.
\end{lemma}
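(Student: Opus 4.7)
The plan is to extract a countable model of $T$ that realizes the given type $p$, and then read off the duplicating type $q$ from an automorphism of that model.

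Fix a non-redundant quantifier-free type $p(x,\zz)$ that is consistent with $T$. First I would pass to the expanded countable language $L^+ \defas L \cup \{c, d_1, \ldots, d_{|\zz|}\}$, where $c$ and $\dd = (d_1,\ldots,d_{|\zz|})$ are fresh constant symbols, and consider the countable $\Lwow(L^+)$-theory $T^+ \defas T \cup \{p(c,\dd)\}$. Consistency of $p$ with $T$ makes $T^+$ satisfiable, and by the downward L\"owenheim--Skolem theorem for $\Lwow$ (see, e.g., \cite[\S2.4]{MR1924282}) it has a countable model $\cM^+$. Let $\cM$ be its reduct to $L$, and set $a \defas c^{\cM^+}$ and $\bb \defas \dd^{\cM^+}$; then $\cM$ is a countable model of $T$ with $\cM \models p(a,\bb)$.

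Next I would invoke the hypothesis: since $\cM$ is a countable model of $T$, it has trivial definable closure, i.e., $\dcl_\cM(\bb) = \bb$. Non-redundancy of $p$ forces $a \ne b_i$ for every entry $b_i$ of $\bb$, so $a \notin \dcl_\cM(\bb)$. By Definition~\ref{dcldef}, there is an automorphism $g \in \Aut(\cM)$ fixing each entry of $\bb$ such that $g(a) \ne a$. Set $a' \defas g(a)$. Because automorphisms preserve quantifier-free formulas, the tuple $(a',\bb)$ also realizes $p$.

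Finally, let $q(x,y,\zz)$ be the complete quantifier-free type of $(a, a', \bb)$ in $\cM$. Then $q$ is consistent with $T$ (it is realized in a model of $T$) and, by construction, $\models q(x,y,\zz) \to \bigl(p(x,\zz) \And p(y,\zz)\bigr)$. Non-redundancy of $q$ follows from three observations: $a \ne a'$ by choice of $g$; each of $a, a'$ differs from every entry of $\bb$ because both tuples realize the non-redundant type $p$ over $\bb$; and the entries of $\bb$ are pairwise distinct for the same reason. This yields the duplicating type required by Definition~\ref{dupdef}.

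The only non-formal step is the first one, where I must obtain a countable model of $T$ in which $p$ is realized; this is exactly what downward L\"owenheim--Skolem for $\Lwow$ provides once the realizing tuple has been named by constants. Once that model is available, the conclusion is an immediate consequence of the definition of $\dcl$ and the fact that automorphisms preserve quantifier-free types.
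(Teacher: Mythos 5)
Your proof is correct, and it is in fact more direct than the paper's. The paper first passes to the pithy $\Pi_2$ expansion $T_A$ of $T$ in the Morleyized language $L_A$, extracts a quantifier-free $\Lwow(L_A)$-type $p_A$ extending $p$, applies downward L\"owenheim--Skolem to $T_A$, finds the automorphism in the $L_A$-model $\cM_A$, forms $q_A$, and only at the end projects back down to an $\Lwow(L)$-type $q$. You skip the expansion entirely: you name the realizing tuple with fresh constants, apply downward L\"owenheim--Skolem to the (single-sentence) theory $T \cup \{p(c,\dd)\}$, take the $L$-reduct, and extract the automorphism $g$ directly from $\dcl_\cM(\bb) = \bb$. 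Since an automorphism of the $L$-reduct is exactly what is needed (quantifier-free $L$-formulas are preserved), the detour through $T_A$ is not logically necessary, and your argument cleanly sidesteps it --- including an assertion in the paper's proof that the extending type $p_A$ is unique, which is not needed and does not obviously follow from Lemma~\ref{pithyconseq}. The rest of your argument (non-redundancy of $q$, consistency with $T$, the implications $q \to p(x,\zz)$ and $q \to p(y,\zz)$) is handled exactly as in the paper and is correct.
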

\begin{proof}
Suppose $p(x,\zz)$ is a non-redundant quantifier-free $\Lwow(L)$-type
consistent with $T$.  
Because every model of $T$ has trivial definable closure, 
\[
T \models p(x,\zz) \to (\exists y) \bigl( p(y,\zz) \wedge (y \neq x) \bigr).
\]
Hence there is some non-redundant quantifier-free $\Lwow(L)$-type
$q(x,y,\zz)$ such that
\[
T \models q(x,y,\zz) \to \bigl( p(x,\zz)  \wedge p(y,\zz) \bigr),
\]
and so $T$ has duplication of quantifier-free types.
\end{proof}

We now use 
Theorem~\ref{ModelsWitnessingT} and
Lemma~\ref{alephnought} 
to 
prove
the positive direction of
Theorem~\ref{maintheorem}.
\begin{theorem}
\label{InvariantMeasuresConcentrated}
Let $L$ be a countable 
language
and
let
$\cM$ be a countable infinite $L$-structure.
If $\cM$ has trivial definable closure, then there is an invariant probability measure 
on $\Model$ that is concentrated on the isomorphism class of $\cM$ in $\Model$.
\end{theorem}
\begin{proof}
Without loss of generality, we may assume that $\cM\in\Model$.
Let $\Mbar$ be the canonical structure of $\cM$ and $L_\Mbar$ its canonical
language.
By Proposition~\ref{pithypitwotheory-canonical},
there is a pithy $\Pi_2$ $\Lwow(L_\Mbar)$-theory  $T_\Mbar$ 
all of whose countable models are isomorphic
to $\Mbar$. 
By Lemmas~\ref{canon-is-interdefinable} and 
~\ref{trivial-dcl-interdefinability} and the fact that
$\cM$  has trivial definable closure, 
the unique 
(up to isomorphism)
countable model 
$\Mbar$ of $T_\Mbar$ has trivial definable closure.
Hence by Lemma~\ref{alephnought}, the theory $T_\Mbar$ has duplication of quantifier-free types.

Since $L_\Mbar$ is relational, by
Theorem~\ref{ModelsWitnessingT}
there is 
a samplable Borel
$L_{\Mbar}$-structure
$\QQ$ strongly witnessing $T_\Mbar$.
Therefore, by Corollary~\ref{stronglywitnessingcorollary}
there is an invariant probability
measure 
on $\Str_{L_\Mbar}$ that is concentrated 
on the set of countable $L_\Mbar$-structures that are isomorphic to $\Mbar$, i.e., those that are
models of $T_\Mbar$.
Finally, by Lemmas~\ref{canon-is-interdefinable} and \ref{invmeasinter},
there is an invariant probability measure 
on $\Model$ that is
concentrated on $\cM$.
\end{proof}

Although the proof of Theorem~\ref{InvariantMeasuresConcentrated} produces
a samplable Borel $L_\Mbar$-structure, where $L_\Mbar$ 
may be quite different from $L$ (in particular, $L_\Mbar$ is always infinite and relational),
we can obtain 
essentially the same invariant measure via a samplable Borel $L$-structure.
We will use this fact in \S\ref{graphlimits}.

\begin{corollary}
\label{reduct}
Let $L$ be a countable language 
and let 
$\cM$ be a countable infinite $L$-structure.
If $\cM$ has
trivial definable closure, then there is a samplable Borel $L$-structure $\PP$ such that
for any continuous nondegenerate probability measure $m$ on $\Reals$, the invariant measure 
$\mu_{(\PP,m)}$ is concentrated on the isomorphism class of $\cM$ in $\Model$.
\end{corollary}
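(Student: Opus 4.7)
The plan is to extract the required Borel $L$-structure directly as the $L$-reduct of the Borel $L_A$-structure produced inside the proof of Theorem~\ref{InvariantMeasuresConcentrated}, and then to observe that sampling commutes with taking reducts.

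First, I would fix a countable fragment $A$ of $\Lwow(L)$ containing the Scott sentence $\sigma_{\cM}$, let $T \defas \{\sigma_{\cM}\}$, and let $T_A$ be the pithy $\Pi_2$ expansion of $T$ to $A$. By Lemma~\ref{alephnought} together with Theorem~\ref{ModelsWitnessingT}, there is a Borel $L_A$-structure $\PP_A$ that strongly witnesses $T_A$. Define $\PP$ to be the $L$-reduct of $\PP_A$: the underlying set of $\PP$ is $\Reals$, and for each relation symbol $R \in L \subseteq L_A$ we interpret $R^{\PP} \defas R^{\PP_A}$. Since every relation of $\PP_A$ is Borel and $L$ is relational, $\PP$ is a Borel $L$-structure.

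Next I would verify the key commutation identity: if $\rho \colon S_{L_A} \to \Model$ denotes the (Borel) $L$-reduct map, then for every $\AA \in \Reals^\omega$,
\[
\rho\bigl(\F_{\PP_A}(\AA)\bigr) = \F_{\PP}(\AA).
\]
This is immediate from Definition of $\F$, since for each $R \in L$ of arity $j$ and $n_1,\ldots,n_j \in \Naturals$, both sides satisfy $R(n_1,\ldots,n_j)$ precisely when $R^{\PP_A}(a_{n_1},\ldots,a_{n_j})$ holds, and $R^{\PP} = R^{\PP_A}$. Consequently,
\[
\mu_{(\PP,m)} = m^{\infty} \circ \F_{\PP}^{-1} = \bigl(m^{\infty} \circ \F_{\PP_A}^{-1}\bigr) \circ \rho^{-1} = \mu_{(\PP_A,m)} \circ \rho^{-1}.
\]

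Finally, fix an arbitrary continuous nondegenerate probability measure $m$ on $\Reals$. Because $\PP_A$ strongly witnesses $T_A$, the pair $(\PP_A,m)$ witnesses $T_A$, and Theorem~\ref{BorelLStructuresWitnessingTLeadToInvariantMeasures} gives that $\mu_{(\PP_A,m)}$ is concentrated on the set of models of $T_A$ in $S_{L_A}$. By Lemma~\ref{pithyconseq}, every model of $T_A$ reduces to a model of $T$, i.e., to a countable structure satisfying the Scott sentence $\sigma_{\cM}$, hence to one isomorphic to $\cM$. Therefore $\rho$ maps $\mu_{(\PP_A,m)}$-almost every structure to the isomorphism class of $\cM$, and so $\mu_{(\PP,m)} = \mu_{(\PP_A,m)} \circ \rho^{-1}$ is concentrated on $\cM$, as required.

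There is no genuine obstacle here beyond bookkeeping; the only point that needs care is making sure that the Borel structure on $\Reals$ we want is literally the reduct of $\PP_A$ (so that sampling gives the reduct of the $L_A$-sample), and that the extra relation symbols of $L_A \setminus L$ play no role in the reduct map. Once the commutation $\rho \circ \F_{\PP_A} = \F_{\PP}$ is in hand, everything else follows from results already established.
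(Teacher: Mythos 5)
Your proof is correct and follows essentially the same route as the paper: take $\PP$ to be the $L$-reduct of $\PP_A$, observe that sampling commutes with the reduct map, and push forward the concentration on models of $T_A$ to concentration on models of $T$, hence on $\cM$. The only cosmetic difference is that you verify the pointwise commutation $\rho \circ \F_{\PP_A} = \F_{\PP}$ directly, while the paper checks equality of the two measures on the subbasic sets $\widehat{R}(n_1,\ldots,n_k)$; these are interchangeable.
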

\begin{proof}
Let $m$ be an arbitrary continuous nondegenerate probability measure on
$\Reals$, and
let $\Mbar$, $L_\Mbar$, $T_\Mbar$,
and $\QQ$ be as in the proof of
Theorem~\ref{InvariantMeasuresConcentrated}.
In particular, $\QQ$ strongly witnesses $T_\Mbar$, and so by 
Theorem~\ref{BorelLStructuresWitnessingTLeadToInvariantMeasures},
the invariant measure
$\mu_{(\QQ,m)}$ is concentrated on $\Mbar$.

Note that,
by Lemma~\ref{SubmodelsSatisfyingT},
$\QQ$ and $\Mbar$ have the same $\Lwow(L_\Mbar)$-theory.
By 
Lemma \ref{canon-is-interdefinable}, $\cM$ and $\Mbar$ are interdefinable;
let $(\Psi_0, \Psi_1)$ be an interdefinition
between $\cM$ and $\Mbar$.
As in Lemma~\ref{interdefinition-lemma},
let $\PP$ be the
$L$-structure that is
interdefinable with $\QQ$ via
the interdefinition $(\Psi_0, \Psi_1)$,
so that $\PPbar = \QQ$, using the notation described after Lemma~\ref{canon-is-interdefinable}.
In particular, $\PP$ and $\cM$
have the same $\Lwow(L)$-theory.

Let $\varphi(\xx)$ be an arbitrary $\Lwow(L_\Mbar)$-formula.
By Lemma~\ref{qfdef-canonical}, 
there is some quantifier-free $\Lwow(L_\Mbar)$-formula $\psi_\varphi(\xx)$ such that 
\[
\Mbar \models \ \varphi(\xx) \leftrightarrow \psi_\varphi(\xx).
\]
Because $\PPbar$ and $\Mbar$ have the same $\Lwow(L_\Mbar)$-theory,
we also have
\[
\PPbar \models \ \varphi(\xx) \leftrightarrow \psi_\varphi(\xx).
\]
Hence any countable substructure of $\PPbar$ isomorphic to $\Mbar$ must in fact be an $\Lwow(L_\Mbar)$-elementary substructure of $\PPbar$.

As every $\Lwow(L_\Mbar)$-definable set in $\PPbar$ is equivalent to a quantifier-free definable set, every definable set in $\PPbar$ is Borel. However every $\Lwow(L)$-definable set in $\PP$ is an $\Lwow(L_\Mbar)$-definable set in $\PPbar$ and so every $\Lwow(L)$-definable set in $\PP$ is Borel. Hence
for any subbasic formula $\zeta$ of $\Lwow(L)$, the set in $\PP$ defined by $\zeta$ is Borel.
Thus $\PP$ is a Borel $L$-structure.

Because $\cM$ has trivial definable closure, $L$ has no constant symbols and every function of $\cM$ is a choice function. Since $\PP$ satisfies the same $\Lwow(L)$-theory as $\cM$, every function of $\PP$ is also a choice function. 
Hence $\PP$ is samplable.

By Lemmas~\ref{mu-is-invariant} and \ref{continuouslemma}, we have that
$\mu_{(\PP, m)}$ is an invariant probability measure on $\Model$ that is concentrated on the union of isomorphism classes of countable infinite substructures of $\PP$.
It remains to show that $\mu_{(\PP, m)}$ is concentrated on the isomorphism class of $\cM$, using the fact that $\mu_{(\PPbar, m)}$ is concentrated on the isomorphism class of $\Mbar$.

Suppose a countable infinite set $N\subseteq \Reals$ is such that the substructure 
$\cN^*$ 
of $\PPbar$ having
underlying set $N$ is isomorphic to $\Mbar$.
We will show that the substructure $\cN^\times$ of $\PP$ having underlying set $N$ is isomorphic to $\cM$.
Again as in Lemma~\ref{interdefinition-lemma},
let $\cN$ be the
$L$-structure that is
interdefinable with $\cN^*$ via
the interdefinition $(\Psi_0, \Psi_1)$,
so that $\Nbar  = \cN^*$. As noted above, $\Nbar$ is an $\Lwow(L_\Mbar)$-elementary substructure of $\PPbar$.
Therefore for any $\Lwow(L_\Mbar)$-formula $\varphi(\xx)$, 
\[
\label{phieqn}
\bigl\{ \aa \in\Nbar \st \PPbar \models \varphi(\aa) \bigr\}
=
\bigl\{ \aa \in\Nbar \st \Nbar \models \varphi(\aa) \bigr\}.
\tag{$\ddag$}
\] 
By \eqref{phieqn} and the fact that $(\Psi_0, \Psi_1)$ is an interdefinition between $\PP$ and $\PPbar$, and between $\cN$ and $\Nbar$, we have
\begin{eqnarray*}
\bigl\{ \aa \in \cN^\times \st \PP \models \psi(\aa)\bigr\}
&=&
\bigl\{ \aa \in \Nbar \st \PPbar \models \Psi_0(\psi)(\aa)\bigr\} \\
&=&
\bigl\{ \aa \in \Nbar \st \Nbar \models \Psi_0(\psi)(\aa)\bigr\} \\
&=&
\bigl\{ \aa \in \cN \st \cN \models \Psi_1(\Psi_0(\psi))(\aa)\bigr\} \\
&=&
\bigl\{ \aa \in \cN \st \cN \models \psi(\aa)\bigr\}
\end{eqnarray*}
for every $\Lwow(L)$-formula $\psi(\xx)$. 
Further, as $\cN^\times$ is a substructure of $\PP$, 
for every \emph{quantifier-free} $\Lwow(L)$-formula $\psi(\xx)$,
we have
\begin{eqnarray*}
\bigl\{ \aa \in \cN^\times \st \cN^\times \models \psi(\aa)\bigr\}
&=&
\bigl\{ \aa \in \cN^\times \st \PP \models \psi(\aa)\bigr\},
\end{eqnarray*}
and so
\begin{eqnarray*}
\bigl\{ \aa \in \cN^\times \st \cN^\times \models \psi(\aa)\bigr\}
&=&
\bigl\{ \aa \in \cN \st \cN \models \psi(\aa)\bigr\}.
\end{eqnarray*}
Hence $\cN = \cN^\times$, and so $\cN^\times$ is isomorphic to $\cM$, as $\Nbar$ is isomorphic to $\Mbar$.

By the fact that $\mu_{(\PPbar,m)}$ is concentrated on the isomorphism class of $\Mbar$ in $\Str_{L_\Mbar}$, we have 
		\[
			m^\infty \Bigl ( \bigl\{ A \in \Reals^\omega \st \F_\PPbar(A) \cong \Mbar \bigr\}\Bigr ) = 1,
		\]
		as 
$\mu_{(\PPbar, m)}  = m^{\infty}\circ \F_{\PPbar}^{-1}$.
Now suppose $A\in \Reals^\omega$ is such that $\F_\PPbar(A) \cong \Mbar$, and consider the the set $N\subseteq\Reals$ of entries of $A$.  By the above, the substructure $\cN^\times$ of $\PP$ having underlying set $N$ is isomorphic to $\cM$,
	and so $\F_\PP(A) \cong \cM$.
Hence 
\[
		 \bigl\{ A \in \Reals^\omega \st \F_\PPbar(A) \cong \Mbar \bigr\} \subseteq
	 \bigl\{ A \in \Reals^\omega \st \F_\PP(A) \cong \cM \bigr\} ,
	 \] 
	 and so
		\[
			m^\infty \Bigl ( \bigl\{ A \in \Reals^\omega \st \F_\PP(A) \cong \cM \bigr\}\Bigr ) = 1.
		\]
Therefore $\mu_{(\PP, m)}$ is concentrated on
the isomorphism class of $\cM$ in $\Model$.
\end{proof}


\section{Non-existence of invariant measures}\label{nonexistence}
In this section we complete the proofs of Theorems~\ref{maintheorem} and \ref{maintheoremthree}.
We begin by considering the converse of
Theorem~\ref{InvariantMeasuresConcentrated},
namely, that for any countable language $L$, a countable infinite
$L$-structure 
having nontrivial definable closure cannot admit an invariant measure.

Suppose a countable $L$-structure $\cM$ admits an invariant measure.
If there exists an
element $b\in\dcl_{\cM}(\emptyset)$, then 
for every $n\in\Nats$ 
the measure assigns the same positive probability to the event that $n$ 
satisfies the quantifier-free type 
of $b$,
which is not possible. More generally, 
$\cM$ having non-trivial definable closure leads to a contradiction, as we
show below;
a special case of this has been observed in \cite[(4.29)]{MR1066691}.

In fact, an even more general result holds.
Upon taking the special case of $G = \sym$, Theorem~\ref{KMConditions}
below completes the proof of our main result
by establishing that property (1) implies property (2) in
Theorem~\ref{maintheorem}.
Indeed, initially we proved  only this special case.
However, Alexander Kechris and
Andrew Marks noticed that with minor modifications to our original proof,
the stronger result Theorem~\ref{KMConditions} holds.
We have included this with their permission.


\begin{theorem}
\label{KMConditions}
Let $L$ be a countable language, and let $\cM$ be a countable infinite $L$-structure.  
Suppose that $G \le \sym$ is the automorphism group of a structure in
$\Model$ that has trivial definable closure.
If there is a $G$-invariant 
probability measure $\mu$ on $\Model$ that is concentrated on the isomorphism class of $\cM$, then $\cM$ must have trivial definable closure.
\end{theorem}
\begin{proof}
Without loss of generality, we may assume that $\cM\in\Model$.
Suppose, for a contradiction,
that there is a tuple $\aa\in\cM$
and an
element $b\in\cM$ such that $b \in \dcl_{\cM}(\aa)-\aa$.
By considering the Scott sentence of the structure obtained by expanding $\cM$ by constants for the tuple $\aa$
(see, e.g., \cite[Theorem~3.3.5]{MR2062240}),
we can find a formula $p(\xx, y)\in \Lwow(L)$ such that
$\cM\models p(\aa, b)$ and whenever $\cM \models p(\cc, d)$ there is an
automorphism of $\cM$ taking $\aa b$ to $\cc d$ pointwise.

In particular,
\[\cM \models (\exists \xx y) p(\xx, y).\]
Since the measure $\mu$ is concentrated on $\cM$, hence on $L$-structures that satisfy 
$(\exists \xx y )p(\xx, y)$,
we have 
\[\mu\bigl(\llrr{(\exists \xx y) p(\xx, y)}\bigr) = 1.\]
By the countable additivity of $\mu$,
there is some tuple
$\mm n \in \Nats$ such that
$\mu\bigl(\llrr{p(\mm, n)}\bigr)>0$.
Fix such an $\mm n$ and let 
$\alpha\defas \mu\bigl(\llrr{p(\mm, n)}\bigr)$.

Note that if 
$\cM \models p(\aa, c)$
for some  $c\in\Nats$, then there is an automorphism of $\cM$ that fixes
$\aa$ pointwise and sends $b$ to $c$;
hence, as $b\in\dcl(\aa)$, such an element $c$ is equal to $b$.
Therefore
we have
\[
\cM \models (\forall \xx y_1 y_2) \bigl( 
(
p(\xx, y_1) \wedge p(\xx, y_2)
)
 \to  (y_1 = y_2)
\bigr).
\]
Hence
for $j, k \in\Nats$,
\[
\llrr{p(\mm, j)} \cap \llrr{p(\mm, k)} = \emptyset
\]
whenever $j \neq k$, and so
\[
\mu\bigl(\llrr{(\exists y) p(\mm, y)} \bigr)
=
\sum_{j\in\Nats}
\mu\bigl(\llrr{p(\mm, j)}\bigr)
.
\]

Now let 
\[X \defas \bigl\{ j\in\Nats \st  (\exists g \in G) \, \bigl ((g(\mm) = \mm) \wedge
(g(n) = j) \bigr)
\bigr\}.\]
By the $G$-invariance of $\mu$,
for all $j \in X$ we have
\[
\mu\bigl( \llrr{p(\mm, j)} \bigr)= \alpha.
\]
However, because $G$ is the automorphism group of a
structure having trivial definable closure, $X$ is infinite.
But 
\[1 \ge \mu\bigl (\llrr{(\exists y) p(\mm, y)}\bigr)
\ge 
\sum_{j\in X}
\mu\bigl(\llrr{p(\mm, j)}\bigr)
=  
\sum_{j\in X} \alpha
,
\]
which is a contradiction as $\alpha > 0$ and $X$ is infinite.
\end{proof}

Consider the properties (1), (2), and (3) of Theorems~\ref{maintheorem} and \ref{maintheoremthree}.
Theorem~\ref{KMConditions} establishes that (3) implies (2), and 
Theorem~\ref{InvariantMeasuresConcentrated} shows that (2) implies (1).
Finally, (1) trivially implies (3), by considering 
the unique countable infinite structure in the empty language, which has
automorphism group $\sym$. Hence the proof of Theorem~\ref{maintheoremthree} is complete.

Kechris and Marks also observed that the equivalence of properties (2)
and (3) from Theorem~\ref{maintheoremthree} can be established more easily
than the full theorem, in particular while
entirely avoiding the machinery of Section~\ref{existence}.
We again include the argument with their permission.

\begin{corollary}[Kechris--Marks]
\label{KMCor}
Let $L$ be a countable language, and let $\cM$ be a countable infinite
$L$-structure.
The following are equivalent:
\begin{itemize}
\item[(2)]
The structure $\cM$ has trivial
group-theoretic definable closure, i.e.,
for every finite tuple $\aa\in\cM$, we have
$\dcl_{\cM}(\aa)= \aa$, where
$\dcl_{\cM}(\aa)$ is the collection of elements $b\in\cM$ that are fixed by
all automorphisms of $\cM$ fixing $\aa$ pointwise.
\item[(3)] 
There is some $\cN\in\Model$ that has trivial group-theoretic definable closure and is such that
there is an
$\Aut(\cN)$-invariant
probability measure on
$\Model$
concentrated on the 
set of elements of $\Model$ that are isomorphic to
$\cM$.
\end{itemize}
\end{corollary}
\begin{proof}
We already have that (3) implies (2) by Theorem~\ref{KMConditions}. But that (2)
implies (3) follows by taking $G = \Aut(\cM)$ and $\mu$ to be a Dirac delta
measure on the structure $\cM$ itself (where we again take $\cM$ to be in $\Model$ without loss of generality).
\end{proof}

As an immediate corollary of Theorem~\ref{KMConditions}, we see that any countable infinite structure 
that admits an
invariant measure cannot have constants, and all of its functions must be choice functions.
This observation has been used in \cite{AckermanSemigroups} to classify those 
commutative $n$-semigroups 
as well as those
ultrahomogeneous $n$-semigroups 
that admit an invariant measure.
\begin{corollary}
\label{selector}
Let $L$ be a countable language, and let $\cM$ be a countable infinite $L$-structure.  Suppose that either $L$ has constant symbols or that there is a function symbol $f\in L$ and tuple $\aa\in\cM$ for which $f^\cM(\aa)\not\in\aa$.  Then there is no invariant probability measure 
on $\Model$ 
that is
concentrated on the isomorphism class of $\cM$.
\end{corollary}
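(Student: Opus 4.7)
The plan is to derive this directly from Theorem~\ref{Conditions} by verifying that each of the two hypotheses forces $\cM$ to have nontrivial definable closure; the contrapositive of Theorem~\ref{Conditions} then rules out the existence of an invariant probability measure on $\Model$ concentrated on $\cM$.

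First I would handle the case where $L$ contains a constant symbol $c$. Every automorphism of $\cM$ fixes $c^\cM$ by definition, so in particular every automorphism fixing the empty tuple pointwise fixes $c^\cM$. Hence $c^\cM \in \dcl_\cM(\emptyset)$, while the empty tuple contains no elements. Thus $\dcl_\cM(\emptyset) \neq \emptyset$, so definable closure in $\cM$ is nontrivial.

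Next I would handle the case where there is a function symbol $f \in L$ and a tuple $\aa \in \cM$ with $f^\cM(\aa) \notin \aa$. Let $b \defas f^\cM(\aa)$. For any automorphism $g$ of $\cM$ fixing $\aa$ pointwise, the definition of automorphism gives $g(b) = g\bigl(f^\cM(\aa)\bigr) = f^\cM\bigl(g(\aa)\bigr) = f^\cM(\aa) = b$. Hence $b \in \dcl_\cM(\aa)$, but by hypothesis $b \notin \aa$, so $\dcl_\cM(\aa) \supsetneq \aa$ and definable closure is again nontrivial.

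In either case, $\cM$ fails to have trivial definable closure, so Theorem~\ref{Conditions} precludes the existence of an invariant probability measure on $\Model$ concentrated on $\cM$. There is no real obstacle here; the only point of care is in verifying, in the constant-symbol case, that the degenerate convention for the empty tuple in Definition~\ref{dcldef} yields the desired conclusion, and in the function-symbol case, that the relevant tuple $\aa$ with $f^\cM(\aa)\notin\aa$ has distinct entries or can be replaced by such a tuple (one may simply pass to the subtuple of distinct entries, since automorphisms fixing this subtuple also fix $\aa$ and hence fix $f^\cM(\aa)$).
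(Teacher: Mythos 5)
Your proof is correct and is exactly the argument the paper has in mind; the paper presents Corollary~\ref{selector} as an immediate consequence of Theorem~\ref{Conditions} without spelling out the details, and your verification that a constant symbol forces $c^\cM \in \dcl_\cM(\emptyset)$ and a non-selector function forces $f^\cM(\aa) \in \dcl_\cM(\aa) \setminus \aa$ is precisely the intended reasoning. The care you take about the empty tuple and about passing to a subtuple with distinct entries matches the way the paper's own proof of Theorem~\ref{Conditions} is set up.
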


Note that this implies that 
Corollary~\ref{maincorollary}, which characterizes
those \Fr\ limits 
in relational languages
that admit invariant measures,
does not extend to structures with constants
or functions.  This is demonstrated, e.g., by Hall's countable universal locally
finite group, whose age has the strong amalgamation property
\cite[\S7.1, Example~1]{MR1221741}, but
which does not have group-theoretic trivial definable closure.

Finally, Theorem~\ref{KMConditions} has the following consequence, which
(for relational languages) extends Lemma~\ref{alephnought}
on the relationship between trivial definable closure and
duplication of quantifier-free types.

\begin{corollary}
\label{partialconverse}
Let $L$ be a countable relational language, and let $T$ be a countable pithy $\Pi_2$ theory of $\Lwow(L)$ that has a unique countable model $\cM$ (up to isomorphism).  Then $\cM$ has trivial definable closure if and only if $T$ has duplication of quantifier-free types.
\end{corollary}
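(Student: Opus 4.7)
The plan is to prove the two directions separately, both by appealing to results already developed in the paper.

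For the forward direction, suppose $\cM$ has trivial definable closure. Since $\cM$ is, up to isomorphism, the unique countable model of $T$, every countable model of $T$ has trivial definable closure. Lemma~\ref{alephnought} then immediately yields that $T$ has duplication of quantifier-free types, with no further work required.

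For the backward direction, suppose $T$ has duplication of quantifier-free types, and assume first that $L$ is relational. Then Theorem~\ref{ModelsWitnessingT} applies directly: Construction~\ref{themainconstruction} produces a Borel $L$-structure $\PP$ that strongly witnesses $T$. Corollary~\ref{stronglywitnessingcorollary} then furnishes an invariant probability measure $\mu$ on $\Model$ concentrated on the set of structures in $\Model$ that are models of $T$. By hypothesis, this set coincides with the isomorphism class of $\cM$, so $\mu$ is concentrated on $\cM$. Theorem~\ref{Conditions} then forces $\cM$ to have trivial definable closure, completing this half of the argument in the relational case.

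For an arbitrary countable language $L$, I would reduce to the relational case using the machinery of Section~\ref{functions}. Passing from $T$ to $T^{*} \cup D$ in the relational language $L^{*}$, the previous paragraph produces an invariant measure concentrated on the countable models of $T^{*} \cup D$, which by Corollary~\ref{functioncor} pulls back to an invariant measure on $\Model$ concentrated on $\cM$; then Theorem~\ref{Conditions}, together with Corollary~\ref{secondfun} to transfer definable closure back along the translation, yields trivial definable closure for $\cM$.

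The main obstacle is essentially bookkeeping rather than substance: one must check that the translation $T \mapsto T^{*} \cup D$ preserves both ``pithy $\Pi_2$'' and ``duplication of quantifier-free types'' so that Theorem~\ref{ModelsWitnessingT} can be invoked in $L^{*}$, and that the unique-countable-model hypothesis on $T$ passes to $T^{*} \cup D$ via the bijection $\alpha$. Each of these is routine, so the corollary follows by composing the cited results.
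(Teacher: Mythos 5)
Your proof follows the same route as the paper's: the forward direction is exactly Lemma~\ref{alephnought} applied to the single countable model, and the backward direction chains Theorem~\ref{ModelsWitnessingT}, Corollary~\ref{stronglywitnessingcorollary}, and Theorem~\ref{Conditions} in the same order. The paper's own proof implicitly treats $L$ as relational (since Construction~\ref{themainconstruction} and Theorem~\ref{ModelsWitnessingT} are developed only in that setting), so your relational argument matches it precisely.

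One small caveat on the extra paragraph you add for arbitrary $L$, which goes beyond what the paper writes: the claim that the $*$-translation preserves ``pithy $\Pi_2$'' is not actually routine when $L$ has function or constant symbols. The translation of an atomic formula containing compound terms (e.g.\ $R(f(x),y)$ or $f(x)=y$) introduces fresh existential quantifiers via the $t^*$ clauses, so if $\psi(\xx,y)$ is quantifier-free with compound terms then $\psi^*(\xx,y)$ is not, and $T^*$ is not pithy $\Pi_2$. To run the reduction honestly one would have to pass further to the pithy $\Pi_2$ expansion of $T^*\cup D$ in a larger relational language (\S\ref{pithy}) and then argue that duplication and the unique-model hypothesis transfer across that expansion. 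This is repairable, but it is more than bookkeeping; since the paper itself restricts to relational $L$ here, it is a caveat on your bonus paragraph rather than a defect in the core argument.
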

\begin{proof}
From Lemma~\ref{alephnought} it is immediate 
that if $\cM$ has trivial definable closure then $T$ must have duplication of quantifier-free types.

For the other direction, suppose that $T$ has duplication of
quantifier-free types.
By Theorem~\ref{ModelsWitnessingT} there is a samplable Borel
$L$-structure strongly witnessing $T$, which by
Corollary~\ref{stronglywitnessingcorollary} induces an invariant
probability measure concentrated on the set of models of $T$ in $\Model$. Hence
there is an invariant measure concentrated on $\cM$. But Theorem~\ref{KMConditions} then implies that $\cM$ has trivial definable closure.
\end{proof}


\section{Structures admitting invariant measures}\label{examples-nonexamples}

We now consider several important classes of structures, and
examine whether or not the structures in these classes
admit invariant measures.
In \S\ref{equivalence-relation}, we show how any countable infinite structure
is a quotient of one with trivial definable closure, and of one without.  We use this fact to construct countable structures of arbitrary Scott rank that have trivial definable closure and hence admit invariant measures, and to construct ones that do not.
In \S\ref{combexamples} we apply our
main results, Theorem~\ref{maintheorem} and Corollary~\ref{maincorollary},
to examine certain well-known 
countable infinite structures, and ask whether or not they admit
invariant measures.
We make
use of existing classifications to provide complete lists of countable
infinite 
ultrahomogeneous 
partial orders,
permutations,
directed graphs,
and graphs, 
for which such invariant measures exist.  

\subsection{Structures with an equivalence relation}\label{equivalence-relation}
\ \\ \indent
Suppose  we are given a countable infinite structure in a countable relational language
with a binary relation symbol. Further, suppose that this symbol is interpreted as an equivalence
relation such that every equivalence class has at least two elements.
Consider
the quotient map on the underlying set that is induced by this equivalence relation.
In the case where this quotient map
\emph{respects} the remaining
relations (in a sense that we will make precise), we can
characterize when the original structure does or does not have trivial definable closure.
On the other hand, starting with an arbitrary countable infinite structure in a countable relational
language,
we can ``blow up'' each element into an
equivalence class, and characterize when the resulting structure
has trivial definable closure.

We will thereby see, in 
Corollary~\ref{measure-split-equivalence-class},
that every countable structure in a countable relational
language is the quotient of one with trivial definable closure, and
of one without.
We then apply this result to further yield 
Corollary~\ref{scott-rank-cor}, and
obtain structures of arbitrary Scott rank that
admit invariant measures, as well as ones that do not.

We begin by describing what it means for an equivalence relation
to respect the remaining relations in the language.
\begin{definition}
For a relational language $L$,
define $L^+ \defas L\cup \{\equiv\}$, where
$\equiv$ is a new binary relation symbol. Let $\cN$ be an
$L^+$-structure. We say that $\equiv$ \defn{respects $L$ in $\cN$}
if for each $k$-ary (non-equality) relation symbol $R \in L$,
\[
\cN \models (\forall x_1, \dots, x_k, y_1, \dots, y_k)
\bigwedge_{1 \leq i \leq k} (x_i \equiv y_i) \rightarrow \bigl(R(x_1, \dots, x_k)
\leftrightarrow R(y_1, \dots, y_k)\bigr).
\]
Such a relation $\equiv$ is often referred to as an \emph{equality}.
\end{definition}
When $\equiv$ respects $L$ in $\cN$, the
structure $\cN$ cannot 
``$L$-distinguish'' between $\equiv$-equivalent elements. In particular, there is a 
quotient structure
induced by the $\equiv$ relation on the underlying set.

For a countable infinite $L^+$-structure $\cN$ in which $\equiv$ respects $L$, and
where every $\equiv$-equivalence class has at least two elements, 
the size of the $\equiv$-equivalence
classes completely determines  whether or not
$\cN$ admits an invariant measure.
\begin{lemma}
\label{measure-and-equivalence-relations}
Suppose 
$\cN$ is a countable infinite $L^+$-structure
such that
$\equiv$ respects $L$ in $\cN$ and such that no $\equiv$-equivalence class has
only one element. The following are equivalent:
\begin{itemize}
\item[(1)] Every $\equiv$-equivalence class of $\cN$ has infinitely many elements.
\item[(2)] There is an invariant measure on $\Str_{L^+}$ that is concentrated on the isomorphism class of $\cN$.
\end{itemize}
\end{lemma}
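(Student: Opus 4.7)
The plan is to reduce, via Theorem~\ref{maintheorem}, to showing that $\cN$ has trivial definable closure if and only if every $\equiv$-equivalence class of $\cN$ is infinite. Since the language $L^+$ is relational, the fact noted in \S\ref{newstrongamalgamation} that trivial definable closure coincides with trivial algebraic closure lets me work with whichever of the two notions is more convenient.

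For the direction (1) $\Rightarrow$ (2), I assume every $\equiv$-class is infinite and prove $\dcl_{\cN}(\aa) = \aa$ for every finite tuple $\aa \in \cN$. Given $b \in \cN \setminus \aa$, I want to exhibit an automorphism of $\cN$ that fixes $\aa$ pointwise but moves $b$. Because the $\equiv$-class of $b$ is infinite while $\aa$ is finite, I can pick some $b'$ with $b' \equiv b$, $b' \neq b$, and $b' \notin \aa$, and let $\sigma$ be the transposition that swaps $b$ and $b'$ and fixes all other elements. To see that $\sigma$ is an automorphism I need only check that it preserves every relation of $L^+$: for each $R \in L$ this is immediate from the hypothesis that $\equiv$ respects $L$ in $\cN$, since $\sigma$ only substitutes $\equiv$-equivalent elements; for $\equiv$ itself, transitivity together with $b \equiv b'$ gives $b \equiv x$ iff $b' \equiv x$ for every $x \in \cN$, so $\sigma$ preserves $\equiv$ as well. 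Thus $\cN$ has trivial definable closure, and Theorem~\ref{maintheorem} supplies an invariant measure concentrated on $\cN$.

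For the converse I argue the contrapositive. Suppose some $\equiv$-class $C$ is finite; by the hypothesis of the lemma, $|C| \geq 2$, so I may pick distinct $b, b' \in C$. Every automorphism of $\cN$ preserves $\equiv$, so any automorphism fixing $b'$ maps $b$ into $C$; since $C$ is finite, the orbit of $b$ under the pointwise stabilizer of $b'$ is finite, and hence $b \in \acl_{\cN}(b') \setminus \{b'\}$. Therefore $\cN$ has nontrivial algebraic closure, hence nontrivial definable closure, and Theorem~\ref{maintheorem} rules out any invariant measure concentrated on $\cN$.

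I do not anticipate a serious obstacle. The one subtlety is making sure that the transposition in the first direction respects $\equiv$ itself and not merely the relations of $L$; this is what forces the explicit appeal to transitivity of $\equiv$, but the verification is short.
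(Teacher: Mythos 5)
Your proof is correct and follows essentially the same route as the paper: for $(1) \Rightarrow (2)$ the paper also uses the transposition $g_{b,b'}$ swapping $\equiv$-equivalent elements to show triviality of definable closure and then appeals to Theorem~\ref{maintheorem}. Two small differences are cosmetic: you prove trivial definable closure directly while the paper argues by contradiction, and you make explicit that the transposition preserves $\equiv$ itself (which is not covered by ``$\equiv$ respects $L$''), a point the paper leaves implicit; for $(2) \Rightarrow (1)$ the paper simply cites its earlier observation (\S\ref{equivrel}) about equivalence relations, whereas you give the same finite-orbit argument inline via algebraic closure, which is a self-contained way of filling in that citation.
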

\begin{proof}

Assume that $(1)$ holds.
Whenever $c,c'\in\cN$
are such that $\cN \models (c \equiv c')$,
define $g_{c, c'} \colon \cN \to \cN$ to be
the map that interchanges $c$ and $c'$ but is constant on all other elements of $\cN$.
Since $\equiv$ respects $L$, the map $g_{c,c'}$ is an automorphism of $\cN$.

Suppose, for a contradiction, that there  are $\aa, b\in \cN$ such that
$b \in \dcl_{\cN}(\aa) -\aa$.  Each $\equiv$-equivalence class
has infinitely many elements, and so there must be some $b'\in\cN$
satisfying $b'\not \in \aa b$ and $\cN\models (b \equiv b')$.  Now, $g_{b,
b'}$ fixes $\aa$ pointwise by construction.  Because $b\in\dcl_{\cN}(\aa)$,
the map $g_{b,b'}$ also fixes $b$.  Hence $b = g_{b,b'}(b) = b'$, a
contradiction.  Therefore $\cN$ has trivial definable closure, and so by
Theorem~\ref{maintheorem}, $\cN$ admits an invariant measure.

For the converse, assume that $(1)$ fails.  Let $A\subseteq\cN$ be a
finite $\equiv$-equivalence class.
By hypothesis, 
$A$ has at least two elements. Hence for each $a \in A$, the set
$A - \{a\}$ is nonempty, and so $a \in \dcl_{\cN}(A - \{a\})$. Therefore
$\cN$ has nontrivial definable closure,
and so $(2)$ fails by 
Theorem~\ref{maintheorem}.
\end{proof}

From Lemma \ref{measure-and-equivalence-relations} we can see that, in a sense, every 
countable infinite $L$-structure is ``close to'' one that admits an invariant measure, and
also to infinitely many that do not.
Specifically, if we take a {countable infinite $L$-structure} and ``blow up'' every element into
$n$-many elements, where $n$ is a cardinal satisfying $1 < n \leq \aleph_0$, then the resulting structure admits
an invariant measure if and only if $n = \aleph_0$.

\begin{definition}
Let $L$ be a relational language, and let 
$\cM$ be a countable infinite $L$-structure with underlying set $M$.
Suppose $n$ is a cardinal satisfying $1 \le n \leq \aleph_0$.  Define $\cM^+_n$ to be the $L^+$-structure with underlying set $M \times n$ such that 
\[
\cM^+_n \models (a, j) \equiv (a',  j')
\qquad \text{if and only if} \qquad 
a  = a',
\]
for every $(a,j), (a', j') \in M\times n$, and
\[
\cM^+_n\models R\bigl((a_1, j_1), \ldots, (a_k, j_k)\bigr)
\qquad \text{if and only if} \qquad 
\cM \models R(a_1, \ldots  a_k),
\]
for every relation $R\in L$ and every $(a_1, j_1), \ldots, (a_k, j_k)\in M\times n$, where $k$ is the arity of $R$.
\end{definition}

In the case when $\cM$ is a graph, this construction is known as the \emph{lexicographic product} of $\cM$ with the empty graph on $n$ vertices.

Note that $\equiv$ is an equivalence relation on $\cM^+_n$ that respects
$L$ in $\cM^+_n$.  Hence we may take the quotient of $\cM^+_n$ by
$\equiv$ to obtain a structure isomorphic to $\cM$.
Moreover, every $\equiv$-equivalence class of $\cM^+_n$ has $n$-many elements.

As an immediate corollary of Lemma \ref{measure-and-equivalence-relations} we have the following.

\begin{corollary}
\label{measure-split-equivalence-class}
Let $L$ be a relational language, 
let 
$\cM$ be a countable infinite $L$-structure,
and
let $n$ be a cardinal such that $1 < n \leq \aleph_0$.
Then
$\cM^+_{\aleph_0}$ admits an invariant measure, while
for $1 < n < \aleph_0$, the structure $\cM^+_n$ does not
admit an invariant measure.
\end{corollary}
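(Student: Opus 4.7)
The plan is to invoke Lemma~\ref{measure-and-equivalence-relations} directly, as the corollary is advertised. First I would check that $\cM^+_n$ fits the hypotheses of that lemma: it is a countable infinite $L^+$-structure (since $M$ is countably infinite and $1 < n \leq \aleph_0$); the relation $\equiv$ respects $L$ in $\cM^+_n$, as observed in the paragraph following the definition of $\cM^+_n$ (the interpretation of each $R \in L$ depends only on the first coordinates, which are precisely the $\equiv$-classes); and no $\equiv$-equivalence class is a singleton, because $n > 1$ implies each class $\{a\} \times n$ has at least two elements.

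Once the hypotheses are verified, the two cases follow by reading off the equivalence in Lemma~\ref{measure-and-equivalence-relations}. In the case $n = \aleph_0$, every $\equiv$-equivalence class $\{a\} \times \aleph_0$ is infinite, so condition~(1) of the lemma holds, and hence condition~(2) holds, giving an invariant measure concentrated on $\cM^+_{\aleph_0}$. In the case $1 < n < \aleph_0$, every $\equiv$-equivalence class has exactly $n$ elements, so it is finite, and condition~(1) of the lemma fails; therefore condition~(2) fails, and no invariant measure is concentrated on $\cM^+_n$.

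There is essentially no obstacle: the work has already been done inside Lemma~\ref{measure-and-equivalence-relations}, where trivial definable closure was deduced from infinite equivalence classes via the transposition automorphisms $g_{c,c'}$, and the failure in the finite case was reduced to the analysis of equivalence relations in \S\ref{equivrel}. The only minor thing to keep in mind is to record the hypothesis verification cleanly so that the lemma applies verbatim in both directions.
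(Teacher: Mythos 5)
Your proposal is correct and matches the paper's intent exactly: the corollary is stated as an immediate consequence of Lemma~\ref{measure-and-equivalence-relations}, and your verification of its hypotheses (countable infinite $L^+$-structure, $\equiv$ respects $L$, no singleton classes since $n>1$) followed by reading off the equivalence in each case is precisely the argument the paper has in mind.
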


Note that this shows that every countable structure in a countable
language is interpretable (see, e.g., \cite[Definition~1.3.9]{MR1924282}) in a structure that admits an
invariant measure, as well as in a structure that does not.

The \emph{Scott rank} of a structure provides a measure of the complexity of the Scott
sentence of the structure. (For details, see \cite{MR2289895}.)
Corollary \ref{measure-split-equivalence-class} 
provides a method by which to build countable structures of
arbitrary Scott rank that admit invariant measures, as well as ones
that do not.
\begin{corollary}
\label{scott-rank-cor}
Let $\alpha$ be an arbitrary countably infinite ordinal.  Define $\T^\alpha$ to be a countable linear order isomorphic to the well-order $(\alpha,\in)$ of height $\alpha$.  Then the structure $(\T^{\alpha})^+_{\aleph_0}$ has Scott rank $\alpha$ and admits an invariant measure, whereas for $1 \leq n <\aleph_0$, the structure $(\T^{\alpha})^+_{n}$ has Scott rank $\alpha$ and does not admit an invariant measure.
\end{corollary}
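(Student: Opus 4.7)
The plan is to prove the two claims—about admitting invariant measures and about Scott rank—separately for the two cases $n = \aleph_0$ and $1 \leq n < \aleph_0$. The invariant-measure part will follow almost immediately from the machinery already developed, while the Scott-rank computation will require a back-and-forth argument reducing to the well-understood Scott rank of countable well-orders.

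For the invariant-measure claim, I would first handle $n = 1$ separately, since $(T^\alpha)^+_1$ is essentially $T^\alpha$ itself (the $\equiv$ relation being just equality): this is a countable infinite linear order that is not isomorphic to $(\Rationals, <)$ (as it is a well-order of infinite height), and so by the classification in \S\ref{rational}, it has nontrivial definable closure and hence does not admit an invariant measure by Theorem~\ref{maintheorem}. For $1 < n < \aleph_0$, the structure $(T^\alpha)^+_n$ satisfies the hypotheses of Lemma~\ref{measure-and-equivalence-relations}, and since it has finite equivalence classes of size greater than one, it does not admit an invariant measure. For $n = \aleph_0$, Corollary~\ref{measure-split-equivalence-class} (or Lemma~\ref{measure-and-equivalence-relations} in the other direction) yields that $(T^\alpha)^+_{\aleph_0}$ admits an invariant measure.

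For the Scott-rank computation, the strategy is to show that the Scott rank of $(T^\alpha)^+_n$ (for $1 \leq n \leq \aleph_0$) equals the Scott rank of $T^\alpha$ itself, which is known to be $\alpha$ for countable infinite ordinals $\alpha$ (this is a standard fact about Scott ranks of well-orders; see, e.g., the results in \cite{MR2289895}). The key observation is that any automorphism of $(T^\alpha)^+_n$ factors through the quotient by $\equiv$, which recovers $T^\alpha$, composed with an arbitrary permutation within each $\equiv$-class. One builds, inductively on $\beta < \alpha$, a system of back-and-forth relations $\approx_\beta$ on tuples of $(T^\alpha)^+_n$ by setting $\aa \approx_\beta \bb$ iff the projections of $\aa$ and $\bb$ to $T^\alpha$ are related by the standard back-and-forth relation at level $\beta$, and the $\equiv$-pattern of $\aa$ and $\bb$ agrees. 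Since the blow-up adds only a structurally trivial equivalence relation (each class being a ``pure set'' of size $n$), this system inherits the back-and-forth depth of the corresponding system on $T^\alpha$, yielding the same Scott rank $\alpha$.

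The main obstacle will be pinning down the exact meaning of ``Scott rank equals $\alpha$'' and verifying it in both directions. There are several nonequivalent definitions of Scott rank in the literature (e.g., Montalb\'an's three variants), and one must argue both an upper bound—every two non-automorphic tuples in $(T^\alpha)^+_n$ are distinguished by an $\Liw$-formula of rank less than $\alpha$—and a lower bound, that no smaller ordinal suffices. The lower bound reduces to the analogous fact for $T^\alpha$, which follows because the projection map to $T^\alpha$ is definable (it is just the $\equiv$-class), so any Scott sentence for $(T^\alpha)^+_n$ gives rise to one for $T^\alpha$ of no greater complexity. The upper bound is handled by the back-and-forth system described above.
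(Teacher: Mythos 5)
Your treatment of the invariant-measure dichotomy is essentially the paper's: for $1 < n \le \aleph_0$ you invoke the equivalence-class machinery of Lemma~\ref{measure-and-equivalence-relations} and Corollary~\ref{measure-split-equivalence-class}, and for $n = 1$ you observe that $T^\alpha$ is a well-order, hence not $(\Rationals,<)$, hence has nontrivial definable closure by the discussion in \S\ref{rational}. The paper's own $n=1$ argument (the least element is in $\dcl(\emptyset)$) is the same observation phrased more directly. This part is fine.

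The Scott-rank claim is where you diverge from the paper, and where there is a genuine gap. You reduce the Scott rank of $(T^\alpha)^+_n$ to the Scott rank of the bare well-order $T^\alpha$ and then cite as a ``standard fact'' that the Scott rank of $(\alpha,<)$ equals $\alpha$ for every countable infinite ordinal. That is not what the classical back-and-forth analysis of ordinals gives: the level-$\gamma$ back-and-forth type of a well-order is controlled by its Cantor normal form truncated at a power of $\omega$ depending on $\gamma$, so ordinals such as $\omega+1$, $\omega\cdot 2$, $\omega^2$ and $\omega^\omega$ all have Scott rank on the order of $\omega$, far smaller than the ordinal itself. If your reduction ``Scott rank of $(T^\alpha)^+_n$ equals Scott rank of $T^\alpha$'' is correct, it would therefore produce a value much smaller than $\alpha$ for most $\alpha$, not $\alpha$. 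The paper does not route through $T^\alpha$: it sketches a direct back-and-forth comparison of $(T^\alpha)^+_n$ against the blown-up \emph{smaller} structures $(T^\beta)^+_n$ for $\beta<\alpha$, which is a different (and not obviously equivalent) strategy. The unsupported ``standard fact'' about Scott ranks of well-orders is the concrete missing step in your write-up, and it is not the lemma the paper's proof relies on.
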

\begin{proof}
	For $1 \le n \le \aleph_0$,
the structure
$(\T^{\alpha})^+_{n}$ has Scott rank $\alpha$, as can be seen by a simple back-and-forth argument with $(\T^{\beta})^+_{n}$ for $\beta < \alpha$.  For $1< n \leq \aleph_0$, the result follows by Corollary \ref{measure-split-equivalence-class}. When $n=1$, the result follows from the fact that the 
least element of
$(\T^{\alpha})^+_{1}$ is in the definable closure of the empty set.
\end{proof}

\subsection{Classifications and other examples}\label{combexamples}
\ \\ \indent
Here we examine certain well-known countable infinite structures, 
and note whether or not they admit invariant measures.  
In some cases, such as the countable universal ultrahomogeneous
partial order, our results provide the first demonstration that the
structure admits an invariant measure.
In several instances, the existence of invariant measures was known
previously, though our results provide a simple way to check this.
For example, 
it has been known nearly since its initial construction
that the Rado graph $\Rado$ admits an invariant measure,
and Petrov and Vershik \cite{MR2724668}   have more recently constructed
invariant measures concentrated on the Henson graph $\Henson$ and on the
other countable universal ultrahomogeneous $K_n$-free graphs.

Our results may be used to determine whether a particular structure admits an invariant measure
either by checking directly whether it has trivial definable closure and applying Theorem~\ref{maintheorem}, or, in the case of an ultrahomogeneous structure in a relational language, by  determining whether its age has
strong amalgamation and applying Corollary~\ref{maincorollary}.
It will be convenient sometimes to use the fact that a structure has trivial definable closure if and only if it has
trivial algebraic closure, as mentioned in \S\ref{newstrongamalgamation}.

In the examples below, all graphs, directed graphs, and partial orders 
are considered to be structures in a language with a single binary relation symbol.


\subsubsection{Countable infinite ultrahomogeneous partial orders}
\label{posets}
\ \\
\indent
These have been classified by Schmerl \cite{MR544855} as follows.
\begin{enumerate}
	\item[\emph{(a)}]
The rationals, $(\Rationals, <)$.
	\item[\emph{(b)}]
The countable universal ultrahomogeneous partial order.
	\item[\emph{(c)}]
The countable infinite antichain.
	\item[\emph{(d)}]
The antichain of $n$
	copies of $\Rationals$ ($1 < n \le \omega$).
	\item[\emph{(e)}]
The $\Rationals$-chain of antichains, each of size $n$ ($1\le n< \omega$).
	\item[\emph{(f)}]
The $\Rationals$-chain of antichains, each of size $\omega$.
\end{enumerate}
All but \emph{(e)} admit invariant measures: Their amalgamation problems can be solved by
taking the transitive closure and, when needed, linearizing, and so their ages exhibit
strong amalgamation. Example \emph{(e)} clearly has nontrivial algebraic
closure, and so does not admit an invariant measure.


\subsubsection{Countable infinite ultrahomogeneous permutations}
\ \\
\indent
Finite permutations have a standard interpretation as structures in a language with 
two binary relation symbols \cite{MR2028272} (see \cite[\S4.1]{cherlin} for
a discussion).
A permutation $\sigma$ on $\{1, \ldots, n\}$ can be viewed as two linear 
orders,
$<$ and $\lhd$, on $\{1, \ldots, n\}$, where $<$ is the usual order, and $\lhd$
is the permuted order, i.e., $\sigma(a) \lhd \sigma(b)$ if and only if $a < b$.
One may extend this perspective on permutations to the infinite case, and consider structures 
that consist of a
single infinite set endowed with two linear orders. Such structures describe relative
finite rearrangements without completely determining a permutation on the infinite set. 
The countable 
infinite
ultrahomogeneous permutations, so defined,
have been classified by Cameron \cite{MR2028272} as follows.

\begin{enumerate}
	\item[\emph{(a)}]
The rationals, 
i.e., where each linear order has order type $\Rationals$ and they are
equal to each other.
	\item[\emph{(b)}]
The reversed rationals, 
i.e., where each linear order has order type $\Rationals$ and the second
is the reverse of the first.
	\item[\emph{(c)}]
Rational blocks of reversed rationals, i.e., where each linear order is
the lexicographic product of $\Rationals$ with itself, and the second
order is the reverse of the first \emph{within} each block.
	\item[\emph{(d)}]
Reversed rational blocks of rationals, i.e., where each linear order is
the lexicographic product of $\Rationals$ with itself, and the second
order is the reverse of the first \emph{between} the blocks.
	\item[\emph{(e)}]
The countable universal ultrahomogeneous permutation.
\end{enumerate}

All five have trivial definable closure and hence admit invariant measures.

\subsubsection{Countable infinite ultrahomogeneous tournaments}
\label{tournaments}
\ \\
\indent
A \emph{tournament} is a structure consisting of a single irreflexive,
binary  relation, $\to$, such that for each pair $a,b$ of distinct
vertices, either $a \to b$ or $b\to a$, but not both. For example, any
linear order is a tournament.
The countable infinite ultrahomogeneous tournaments have been classified by
Lachlan \cite{MR743728} as follows.

\begin{enumerate}
	\item[\emph{(a)}]
The rationals, $(\Rationals, <)$.
	\item[\emph{(b)}]
The countable universal ultrahomogeneous tournament, $T^\infty$.
	\item[\emph{(c)}]
The \emph{circular tournament} $S(2)$, also known as the \emph{local order}, 
which consists of a countable 
dense subset of a circle where no two points are antipodal, with $x \to
y$ if and only if
the angle of $xOy$ is less than $\pi$, where $O$ is the center of the
circle.
\end{enumerate}
The ages of all three exhibit strong amalgamation (see
\cite[\S2.1]{MR1434988}).


\subsubsection{Countable infinite ultrahomogeneous directed graphs}
\ \\
\indent
A \emph{directed graph} is a structure consisting of a single irreflexive,
binary  relation, $\to$, that is asymmetric, i.e., such that for each pair $a,b$ of distinct
vertices, $a \to b$ and $b\to a$ do not both hold. The countable infinite
ultrahomogeneous directed graphs have been classified by Cherlin
\cite{MR1434988}
(see also \cite{MR895639} for the imprimitive case). 
Macpherson \cite{MR2800979} describes the classification
as follows (with some overlap between classes).
\begin{enumerate}
	\item[\emph{(a)}]
The countable infinite ultrahomogeneous partial orders.
	\item[\emph{(b)}]
The countable infinite ultrahomogeneous tournaments.
	\item[\emph{(c)}]
Henson's countable infinite ultrahomogeneous directed graphs with
forbidden sets of tournaments.
	\item[\emph{(d)}]
The countable infinite ultrahomogeneous directed graph omitting $I_n$,
the edgeless directed graph on $n$ vertices ($1<n<\omega$).
	\item[\emph{(e)}]
Four classes of directed graphs that are imprimitive, i.e., for which there is a nontrivial
equivalence relation definable without parameters.
	\item[\emph{(f)}]
Two exceptional directed graphs: 
a shuffled $3$-tournament $S(3)$, defined analogously to the local order
(defined above in \ref{tournaments}(c))
with angle
$2\pi/3$, and 
the \emph{dense local partial order} $\mathcal{P}(3)$, a modification of
the countable universal ultrahomogeneous partial order.
\end{enumerate}

The structures in 
\emph{(a)}
and
\emph{(b)} are discussed above, in  \S\ref{posets} and \S\ref{tournaments},
respectively.

Henson \cite{MR0321727} described the class 
\emph{(c)}
of $2^{\aleph_0}$-many
nonisomorphic countable infinite ultrahomogeneous directed graphs with
forbidden sets of tournaments. 
The age of each has \emph{free} amalgamation,
i.e., its amalgamation problem can be solved by taking the disjoint union
over the common substructure and adding no new relations.   Free
amalgamation implies strong amalgamation; 
hence on Henson's ultrahomogeneous directed graphs there are invariant
measures.

The ages of the structures in \emph{(d)} have strong amalgamation.

The first imprimitive class in \emph{(e)} consists of the wreath
products $T[I_n]$ and $I_n[T]$ where $T$ is a countable infinite
ultrahomogeneous tournament (as discussed above in \S\ref{tournaments})
 and $1 < n < \omega$.
Each $T[I_n]$ has nontrivial definable closure because there is a definable
equivalence relation, each class of which has $n$ elements.
Each $I_n[T]$ has trivial definable closure because it is the disjoint
union of copies of an infinite tournament that
has strong amalgamation.

The second imprimitive class
in \emph{(e)}
consists of $\widehat{\Rationals}$ and
$\widehat{T^\infty}$, modifications of the rationals and the countable
universal ultrahomogeneous tournament, respectively, in which the algebraic closure of
each point has size $2$, namely itself and the unique other point to which it
is not related. Hence neither directed graph has trivial definable closure.

The third imprimitive class 
in \emph{(e)}
consists of directed graphs $n * I_\infty$, for $1 < n \le \omega$,
which are universal subject to the constraint that non-relatedness is
an equivalence relation with $n$ classes. 
All such
directed graphs have trivial definable closure.

The fourth imprimitive class 
in \emph{(e)}
consists of a \emph{semigeneric} variant
of $\omega * I_\infty$ with a parity constraint, which also has trivial definable closure.

The ages of $S(3)$ and $\mathcal{P}(3)$ exhibit strong amalgamation.

\subsubsection{Countable infinite ultrahomogeneous graphs}
\label{LachlanWoodrow}
\ \\
\indent
These have been classified by Lachlan and Woodrow \cite{MR583847} as follows.
	\begin{enumerate}
	\item[\emph{(a)}]
The Rado graph $\Rado$. 
	\item[\emph{(b)}]
The Henson graph $\Henson$
and the other
countable universal ultrahomogeneous  $K_n$-free graphs $(n>3)$, and their 			complements.
	\item[\emph{(c)}]
Finite or countably infinite union of $K_\omega$, and their complements.
	\item[\emph{(d)}]
Countably infinite union of $K_n$ (for $1< n<\omega$), and their complements.
\end{enumerate}
The ages of the structures in \emph{(a)} through \emph{(c)} all have strong
amalgamation; in fact, for the Rado graph, Henson's $\Henson$ and other
$K_n$-free graphs, and the complement of $K_\omega$, the amalgamation
is free.
Hence the structures in \emph{(a)} through \emph{(c)} all admit invariant
measures.  The structures in \emph{(d)} clearly have nontrivial algebraic
closure, and so do not admit invariant measures.


\subsubsection{Countable universal $C$-free graphs}  
\ \\
\indent
Let $C$ be a finite set of  finite connected graphs. 
A graph $\G$ is said to be \emph{$C$-free}, or to \emph{forbid $C$}, when no member of $C$ is isomorphic to a
(graph-theoretic) subgraph of $\G$, i.e., when no member of $C$ embeds as a weak
substructure of  $\G$.
A countable infinite $C$-free graph  $\G$ is said to be \emph{universal}
when every
countable $C$-free graph is isomorphic to an induced subgraph of $\G$, i.e., embeds as a
substructure of $\G$. When there is a universal such graph, there is one (up to isomorphism) that is distinguished by being \emph{existentially complete}.

Only a limited number of examples are known of finite sets $C$ of finite
connected graphs for which
a countable universal $C$-free graph exists (see the introduction to
\cite{MR1683298} for a discussion).
The best-known are when $C = \{K_n\}$, for \hbox{$n\ge 3$;} Henson's
countable universal ultrahomogeneous $K_n$-free graph is universal for
countable graphs that forbid $\{K_n\}$.
We consider two other families here.

\emph{(a)} 
The set $C$ is homomorphism-closed, i.e., closed under maps that preserve edges but not
necessarily non-edges. 
For example, take $C$ to be the set of cycles of all odd
lengths up to a fixed $2n+1$.
Cherlin, Shelah, and Shi
\cite[Theorem~4]{MR1683298} 
have shown
that
for a homomorphism-closed set $C$,
an existentially complete countable universal
$C$-free graph exists and 
has trivial algebraic closure. Hence these graphs admit
invariant measures.
Such graphs 
have also been considered in \cite{2009arXiv0909.4939H}.

\emph{(b)} The singleton set $C = \{K_m \plusdot K_n\}$ for some $m,n>2$, where
 $K_m \plusdot K_n$ is the graph on $m+n-1$ vertices consisting of complete
graphs $K_m$ and $K_n$ joined at
a single vertex. For example, $K_3\plusdot K_3$ is the
so-called \emph{bowtie}.
An existentially complete countable universal $(K_m \plusdot K_n)$-free graph exists if and only if $\min(m,n) = 3$ or $4$,  or $\min(m,n)
=5$ but
$m\neq n$
(\cite{MR1675931}, \cite{MR1683298},  and \cite{MR2342786}). 
Any such 
graph 
has nontrivial
algebraic closure because, by existential completeness,
it must contain a copy $\cK$ of $K_{m+n-2}$, but for any 
vertex $v\in\cK$, the algebraic closure of $\{v\}$ in the graph is all of
$\cK$.


\subsubsection{Trees and connected graphs with finite cut-sets}
\ \\
\indent
A tree is an acyclic connected graph. 
No tree can have trivial algebraic closure because there exists a unique finite
path between any two distinct vertices of the tree.
Similarly, no connected graph with a cut-vertex 
(a vertex whose removal disconnects the graph)
can have trivial algebraic closure.
More generally, if a connected graph contains a finite cut-set
(a finite set whose removal disconnects the graph), then it cannot have trivial
algebraic closure.


\subsubsection{Rational Urysohn space}
\ \\
\indent
A rational metric space is a metric space all of whose distances are
rational.
The class of all finite rational metric spaces, considered
in the language with one relation symbol for each rational distance, 
is a \Fr\ class. 
Its \Fr\ limit is known as the \emph{rational Urysohn space}, denoted
$\QU$ (for details see
\cite{MR2258622}). 
The completion of $\QU$ is the Urysohn space,
	the universal ultrahomogeneous
	complete separable metric space.

	The space $\QU$ admits an invariant measure,
	as can be seen from our results, 
since the
class of finite rational metric spaces has strong amalgamation.
Vershik, in \cite{MR2006015} and \cite{MR2086637},
has earlier constructed invariant measures concentrated on 
a collection of countable metric spaces whose completions are also Urysohn space.
For a construction of several related invariant measures,
see \cite{AFNP}.


\section{Applications and further observations}
\label{more-general}

We conclude the paper with some observations and applications of our
results.
We describe, in \S\ref{graphlimits},
some of the theory of dense graph limits and its connections to our
setting.

Our main theorem, Theorem~\ref{maintheorem}, completely characterizes
those single orbits of $\sym$ on which an
invariant measure can be concentrated.
In \S\ref{multipleisom},
we ask
which other Borel subsets of $\Model$, consisting of multiple orbits, are such that some invariant measure is
concentrated on them, and we make some observations based on our machinery.

Finally, in \S\ref{scottcontinuum}, 
we note a corollary of our result for sentences of $\Lwow$ that have exactly one model (countable or otherwise).


\subsection{Invariant measures and dense graph limits}
\label{graphlimits}
\ \\
\indent
As remarked in the introduction, 
our constructions in the case of graphs can be viewed within the
framework of the
theory of dense graph limits.
Here we describe this connection and some of its consequences.

\subsubsection{Invariant measures via graphons and $W$-random graphs}
\ \\ \indent
We now describe how invariant
measures arise in the context of
dense graph limits.
We begin with some definitions from \cite{MR2274085};
for more details, see also \cite{MR2463439}, and \cite{MR3012035}.

A \defn{graphon} is defined 
to be a symmetric measurable  function $W\colon [0,1]^2\to[0,1]$. 
In what follows, we will take all graphons to be Borel measurable.
Let $L_G$ be the language of graphs, i.e., 
a
language consisting of a
single binary relation symbol $E$, representing the edges.
Let $T_G$ be the theory in the language $L_G$ that says that $E$ is symmetric and irreflexive.
A graph may be considered to be an $L_G$-structure that satisfies $T_G$.
An invariant measure on graphs is then precisely
an invariant measure on $\Str_{L_G}$ that is
concentrated on the set of models of $T_G$ in $\Str_{L_G}$.

Given a graphon $W$, the \defn{$W$-random graph} $\GG(\Nats, W)$
can be thought of as a random element of $\Str_{L_G}$, defined as follows.
Let $\{X_k\}_{k\in\Naturals}$ be an independent sequence of random variables uniformly
distributed on the unit interval.
Then for $i,j\in\Naturals$ with $i<j$, let
$E(i,j)$ hold with independent probability $W(X_i,X_j)$; for each $i$, require that
$E(i, i)$ not hold; and for each $i>j$, let $E(i, j)$ hold if and only if $E(j, i)$ does.
For example, when $W$ is a constant function $p$ where $0<p<1$, then
$\GG(\Nats, W)$ is essentially the Erd\H{o}s-R\'{e}nyi graph $\GG(\Nats, p)$, described in \S\ref{background}.
Notice that for any graphon $W$, the distribution of $\GG(\Nats, W)$ is an invariant measure on graphs.

Not only is the distribution of $\GG(\Nats, W)$ invariant for an arbitrary graphon $W$, 
but so are the mixtures, i.e., convex combinations, of 
such distributions.
Conversely,
Aldous \cite{MR637937} and Hoover \cite{Hoover79}
showed, in the context of exchangeable random arrays, that \emph{every} invariant measure on graphs is such a mixture,
thereby completely characterizing the invariant measures on graphs.
This characterization has also
arisen in the theory of dense graph limits; for details see \cite{MR2463439} and \cite{MR2426176}.

An analogous theory to that of graphons has been developed for other
combinatorial structures such as partial orders
\cite{DBLP:journals/combinatorica/Janson11} and permutations
\cite{MR2995721}. The standard recipe described in \cite{MR2426176} extends this machinery to the general case of countable relational languages of bounded arity.  When $L$ has bounded arity, our notion of Borel $L$-structure, from \S\ref{BorelLStructuresSection}, can be viewed as a specialization of certain structures that occur in the standard recipe.  In particular, any Borel $L_G$-structure that is a model of $T_G$ corresponds to a graphon, as we will now see.
Recall that because $L_G$ is relational, every Borel $L_G$-structure is samplable.

\subsubsection{Borel $L_G$-structures and random-free graphons}
\ \\ \indent
Borel $L_G$-structures  that are models of $T_G$ (i.e., graphs) are closely related to a
particular class of graphons. 
Here we describe this relationship
and use  it
to deduce a corollary about 
$W$-random graphs whose distributions are concentrated on single
countable graphs.

A graphon $W$ is said to be \defn{random-free} 
\cite[\S10]{MR3043217} 
if for a.e.\ $(x,y)\in[0,1]^2$ we have
$W(x,y) \in \{0,1\}$.
(See also the \emph{simple arrays} of \cite{MR1702867} and $0$\,--\,$1$ valued graphons in
\cite{MR2815610}.)
When $W$ is random-free, the $W$-random graph process amounts,
in the language of \cite{MR2724668}, to ``randomization in vertices'' but not ``randomization in edges''.

We now describe a correspondence between Borel $L_G$-structures 
satisfying $T_G$ and
random-free graphons.
Let $\alpha$ be an arbitrary Borel measurable bijection from the open interval $(0,1)$ to $\Reals$, 
and let $m_{\alpha}$ be the distribution of $\alpha(U)$ where $U$ is uniformly  distributed on $[0,1]$.
Given a
Borel $L_G$-structure $\PP$ that satisfies $T_G$,  
define
the random-free graphon 
$W_{\PP}$  as follows.
For $(x,y) \in (0,1)^2$ let
\[
W_{\PP}(x,y) = 1 \qquad \text{if and only if} \qquad \PP \models
E\bigl(\alpha(x), \alpha(y)\bigr),
\]
and for $(x,y)$ on the boundary of $[0,1]^2$
let $W_{\PP}(x,y) = 0$. The
distribution of  $\GG(\Nats, W_{\PP})$ is precisely $\mu_{(\PP,
m_{\alpha})}$, as defined in Definition~\ref{mupm}. 
Conversely, given a graphon $W$ that is Borel and random-free, one can easily build a Borel
$L_G$-structure $\PP_W$ satisfying $T_G$
such that the distribution of $\GG(\Nats, W)$ is $\mu_{(\PP_W, m_{\alpha})}$.

By Corollary~\ref{reduct}, if a countable graph admits an invariant measure,
then it admits one of the form $\mu_{(\PP, m)}$, where $\PP$ is a 
Borel ${L_G}$-structure.
In particular, the
corresponding random-free graphon $W_{\PP}$ is such
that the distribution of $\GG(\Nats, W_{\PP})$ is an invariant measure concentrated on 
the given graph.
This leads to the following corollary.

\begin{corollary}
\label{randomfreecor}
Let $\cM$ be
a countable infinite graph. Suppose there is some graphon $W$ such that
the distribution of $\GG(\Nats, W)$ is concentrated on $\cM$. 
Then there is a random-free graphon $W'$ such that the distribution of
$\GG(\Nats, W')$ is also concentrated on $\cM$.
\end{corollary}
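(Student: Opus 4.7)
The plan is to apply the machinery already developed, with the upgrade to a random-free graphon happening essentially for free via Corollary~\ref{reduct}. First I would observe that, by construction, the distribution of $G(\infty, W)$ is always an invariant probability measure on $S_{L_G}$ (concentrated on models of $T_G$); this is immediate from the independence of the underlying uniform sequence $\{X_k\}_{k\in\Naturals}$ and the symmetry of $W$. Hence the hypothesis gives an invariant measure on $S_{L_G}$ concentrated on (the isomorphism class of) the countable infinite graph $\cM$. By Theorem~\ref{maintheorem}, $\cM$ must therefore have trivial definable closure.

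Next, since $L_G$ is a countable relational language, I would invoke Corollary~\ref{reduct} in this setting to produce a Borel $L_G$-structure $\PP$ such that, for any continuous nondegenerate probability measure $m$ on $\Reals$, the measure $\mu_{(\PP,m)}$ is invariant and concentrated on $\cM$. Because $\cM \models T_G$, the Scott sentence of $\cM$ implies $T_G$; the pithy $\Pi_2$ expansion used in the proof of Theorem~\ref{InvariantMeasuresConcentrated} therefore entails $T_G$, and by Lemma~\ref{SubmodelsSatisfyingT} applied to the strongly witnessing $L_G$-structure obtained from Construction~\ref{themainconstruction} we get $\PP \models T_G$. Thus $\PP$ is a Borel $L_G$-structure that is genuinely a graph, so the correspondence $\PP\mapsto W_{\PP}$ described immediately before the corollary applies.

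Finally, set $W' \defas W_{\PP}$. By the construction of $W_{\PP}$ from $\PP$, the graphon $W'$ takes only the values $0$ and $1$, hence is random-free. Moreover, the explicit correspondence yields that the distribution of $G(\infty, W')$ equals $\mu_{(\PP, m_{\alpha})}$, where $m_{\alpha}$ is the continuous nondegenerate probability measure on $\Reals$ obtained by pushing forward the uniform measure on $[0,1]$ along the Borel bijection $\alpha\colon (0,1)\to\Reals$. Since $\mu_{(\PP, m_{\alpha})}$ is concentrated on $\cM$ by our choice of $\PP$, the distribution of $G(\infty,W')$ is concentrated on $\cM$, as required.

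There is no real obstacle; the only small thing to verify carefully is that the Borel $L_G$-structure supplied by Corollary~\ref{reduct} actually satisfies $T_G$ (so that $W_{\PP}$ is well defined as a symmetric $\{0,1\}$-valued function), which follows from Lemma~\ref{SubmodelsSatisfyingT} since our $\PP$ strongly witnesses a theory implying $T_G$.
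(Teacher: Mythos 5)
Your proof matches the paper's argument step for step: invariance of the distribution of $G(\infty,W)$, then Theorem~\ref{maintheorem} to deduce trivial definable closure, then Corollary~\ref{reduct} to get a Borel $L_G$-structure $\PP$, and finally the correspondence $\PP\mapsto W_\PP$ together with the identity (distribution of $G(\infty,W_\PP)$) $=\mu_{(\PP,m_\alpha)}$. Your added check that $\PP\models T_G$ is sensible and the paper leaves it implicit; just note that $\PP$ is a reduct, not itself a strongly witnessing structure, so the clean route is: $\PP_A\models T_A$ by Lemma~\ref{SubmodelsSatisfyingT}, every quantifier-free $L_A$-type realized in $\PP_A$ is consistent with $T_A$ by construction, and any violation of the universal sentences of $T_G$ would yield a pair whose quantifier-free type is inconsistent with $T_A$ (since $T_A$ conservatively extends $\{\sigma_\cM\}$, whose countable models all satisfy $T_G$).
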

\begin{proof}
The distribution of $\GG(\Nats, W)$ is an invariant measure concentrated on
$\cM$.
Therefore by Theorem~\ref{maintheorem}, the graph $\cM$ must have trivial definable closure. 
By Corollary~\ref{reduct}, there is a Borel $L_G$-structure $\PP$ such that
$\mu_{(\PP,m)}$ is concentrated on $\cM$ whenever $m$ is a continuous nondegenerate probability
measure on $\Reals$.
As above, let
$\alpha \colon (0,1) \to \Reals$ be a Borel bijection 
and let $W_{\PP}$ be the
random-free graphon induced by the given correspondence.
Then the distribution 
of $\GG(\Nats, W_\PP)$  is
$\mu_{(\PP, m_\alpha)}$, and hence
is also concentrated on $\cM$.
\end{proof}

In fact, for an arbitrary
countable relational language $L$,
our procedure for sampling from a Borel $L$-structure
essentially arises in
\cite{MR2426176} 
as a standard recipe in
which all but the first ``ingredient'' are deterministic maps.
In this setting, one can prove an analogue of  Corollary~\ref{randomfreecor}
for arbitrary countable infinite $L$-structures.

	The best-known graphons $W$  for which
	$\GG(\Nats, W)$ is isomorphic to the Rado graph are the constant functions
	$W \equiv p$ 
	for $0<p<1$, i.e.,
	those given by
the Erd{\H{o}}s--R\'enyi construction. However, these are not the only such graphons.
	Petrov and Vershik \cite{MR2724668} were the first to describe invariant measures concentrated on the
	Rado graph 
	that correspond to random-free graphons.  Figure~\ref{radoon}  is a visualization of a random-free graphon $W$,
built essentially by the methods of \cite{MR2724668} and the present paper,
	for which
$\GG(\Nats, W)$ is a.s.\ isomorphic to the Rado graph, 

\begin{figure}[h]
{
\hspace{-20pt}\!\!
\begin{center}
\includegraphics[width=0.50\linewidth]{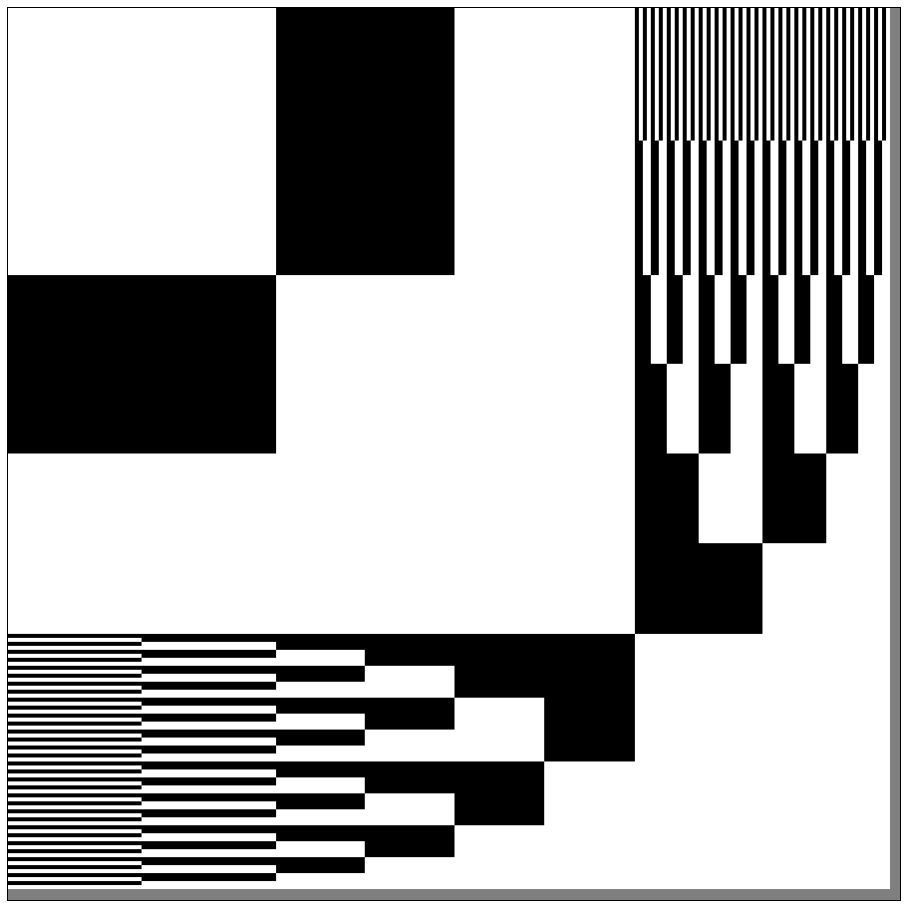}
\caption{An illustration of a random-free graphon $W$ such that $\GG(\Nats, W)$ is a.s.\ isomorphic to the Rado graph. (The thin grey strips on the right and bottom represent regions not drawn in detail --- not values of the graphon between $0$ and $1$.)}
\label{radoon}
\end{center}
}
\end{figure}

\subsection{Multiple isomorphism classes}
\label{multipleisom}
\ \\ \indent
In this paper, we have focused on the problem of identifying those countable infinite $L$-structures $\cM$ such that some invariant measure is concentrated on the isomorphism class of $\cM$ in $\Model$, i.e., on the orbit under the logic action of any structure in $\Model$ isomorphic to $\cM$.  But it is natural to investigate those larger subsets of $\Model$, consisting of the union of multiple orbits, on which an invariant measure may be concentrated.  For example, Austin \cite[Question~3.27]{MR2426176} asks for a characterization of first-order theories $T$ such that any invariant measure concentrated on the set of
models of $T$ in $\Model$ must come from a standard recipe having a property akin to being random-free.


There are clearly invariant measures on $\Model$ that are not concentrated on any single structure, as can be seen by taking mixtures of invariant measures concentrated on different structures. But if there is a countable set of structures on which such an invariant measure is concentrated, then we can see by conditioning
that there must be 
some
invariant measure concentrated on one of these
structures.

\begin{lemma}
Let $L$ be a countable language, and let $T$ be a theory of $\Lwow(L)$ that
has at most countably many countable infinite models (up to isomorphism).  Suppose $\mu_T$ is an invariant measure on $\Model$ that is concentrated on the set of models of $T$ in $\Model$.  Then there is a countable model $\cM$ of $T$ such that some invariant measure $\mu_{\cM}$ is concentrated on the isomorphism class of $\cM$ in $\Model$.
\end{lemma}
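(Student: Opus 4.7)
The plan is to decompose the Borel set of countable models of $T$ in $\Model$ into its $\sym$-orbits and then condition $\mu_T$ on one of positive measure. Since $T$ has at most countably many countable infinite models up to isomorphism, choose representatives $\{\cM_i\}_{i\in I}$ in $\Model$, where $I$ is countable. I would first recall from \S\ref{logicaction} that for each $i$ the isomorphism class $C_i \defas \{\cN\in\Model\st\cN\cong\cM_i\}$ equals $\widehat{\sigma_{\cM_i}}$ for the Scott sentence $\sigma_{\cM_i}\in\Lwow(L)$ of $\cM_i$, and hence is Borel. Moreover each $C_i$ is a single orbit of the logic action, and so is $\sym$-invariant.

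Next I would observe that the set $\widehat{T} \defas \bigcap_{\varphi\in T}\widehat{\varphi}$ of elements of $\Model$ that are models of $T$ is Borel, $\sym$-invariant, and satisfies $\mu_T(\widehat{T})=1$ by hypothesis; furthermore, since every element of $\Model$ is a countable infinite $L$-structure, the assumption on $T$ gives $\widehat{T} = \bigsqcup_{i\in I} C_i$ as a disjoint union. By countable additivity, $\sum_{i\in I}\mu_T(C_i) = 1$, so there must exist some $i^*\in I$ with $\mu_T(C_{i^*}) > 0$.

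Finally, set
\[
\mu_{\cM_{i^*}}(X) \defas \frac{\mu_T(X \cap C_{i^*})}{\mu_T(C_{i^*})}
\]
for every Borel $X\subseteq\Model$. This is clearly a probability measure concentrated on $C_{i^*}$, i.e., on the isomorphism class of $\cM_{i^*}$. Its invariance under $\sym$ follows immediately from the $\sym$-invariance of both $\mu_T$ and $C_{i^*}$: for any Borel $X$ and any $g\in\sym$, we have $g\cdot(X\cap C_{i^*}) = (g\cdot X)\cap C_{i^*}$, and so $\mu_T\bigl((g\cdot X)\cap C_{i^*}\bigr) = \mu_T(X\cap C_{i^*})$. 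Taking $\cM = \cM_{i^*}$ completes the proof. There is essentially no obstacle here beyond bookkeeping; the substance is simply the observation that countably many Borel $\sym$-invariant pieces cannot all be null under a probability measure, and that the conditional on a positive-measure invariant set remains invariant.
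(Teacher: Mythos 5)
Your proof is correct and follows essentially the same approach as the paper: use countable additivity to find an isomorphism class of positive $\mu_T$-measure, then condition $\mu_T$ on that class, noting that both the measure and the class are $\sym$-invariant so the conditional measure is too. The only cosmetic difference is that you spell out the orbit decomposition more explicitly than the paper does.
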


\begin{proof}
Because $\mu_T$ is countably additive and $T$ has only countably many
countable infinite models, there must be some countable infinite structure $\cM \models T$ such that its
isomorphism class $\widetilde{\cM} \defas \{\cN\in \Model\st \cN\cong \cM\}$ has positive $\mu_T$-measure.  Recall that $\widetilde{\cM}$ is a Borel set.  Let $\mu_\cM$ be $\mu_T$ conditioned on this positive measure set, i.e., 
\[
\mu_{\cM}(A) \defas \mu_T\bigl(A\,|\,\widetilde{\cM}\bigr)= \mu_T\bigl(A \cap
\widetilde{\cM}\bigr)/ \mu_T\bigl(\widetilde{\cM}\bigr)
\]
for every Borel set $A\subseteq\Model$. Then $\mu_\cM$ is a probability
measure on $\Model$
concentrated on the isomorphism class of $\cM$.

Moreover,
$\mu_\cM$
is invariant, as we now show. Suppose $g \in \sym$, and
let $A$ be an arbitrary Borel subset of $\Model$.
Because
$\widetilde{\cM}$ is an $\sym$-invariant subset of $\Model$, we have
\[
\mu_T\bigl(g(A) \cap \widetilde{\cM}\bigr)
= \mu_T\bigl(g(A) \cap g(\widetilde{\cM})\bigr),
\]
and because $\mu_T$ is an invariant measure, we have
\[
\mu_T\bigl(g(A \cap \widetilde{\cM})\bigr)
= \mu_T\bigl(A \cap \widetilde{\cM}\bigr).
\]
Since $g(A) \cap g\bigl(\widetilde{\cM}\bigr) = g\bigl(A \cap \widetilde{\cM}\bigr)$, we
have
$ \mu_{\cM}(g(A)) = \mu_{\cM}(A)$, as desired.
\end{proof}


One may ask, more specifically, given a samplable Borel $L$-structure $\PP$ and a continuous
nondegenerate probability measure $m$ on $\Reals$, the
minimum number of isomorphism classes on whose union the measure $\mu_{(\PP,m)}$ is concentrated.
When $\PP$
strongly witnesses a theory $T$ of $\Lwow(L)$ 
having just one countable infinite model up to isomorphism,
then there is just one isomorphism class by design.
However, if $\PP$ strongly witnesses a pithy $\Pi_2$ theory $T$ of $\Lwow(L)$ 
that 
has 
nonisomorphic
countable infinite models, then
the situation is more complicated. 
In this case, 
still
$\PP \models T$ 
by
Lemma~\ref{SubmodelsSatisfyingT}, but the induced invariant measure
might be concentrated on a union of multiple 
isomorphism classes of
models of $T$, but not on any single such class.

However,
as we state in Corollary~\ref{ergodic-cor},
this is not possible if
the measure is concentrated on a \emph{countable} union
of isomorphism classes.
By countable additivity,
any invariant measure 
on $\Model$
that is concentrated on a union of countably
many isomorphism classes, but not on a single class, 
must be
non-ergodic.
But every 
measure of the form $\mu_{(\PP,m)}$ is ergodic, as we now explain.

	The ergodic invariant measures on graphs
	are precisely those induced by sampling from a single graphon, rather than a mixture of such
(see  \cite[Corollary~5.4]{MR2463439} and \cite[Proposition~3.6]{JGT:JGT20611}).
Aldous had earlier characterized 
	the ergodic invariant measures on hypergraphs in a similar way (see \cite[Proposition~3.3]{MR637937} or \cite[Lemma~7.35]{MR2161313}). This characterization has a generalization to countable infinite languages, e.g., via the setting of Kallenberg's extension of the Aldous--Hoover theorem (\cite[Lemma~7.22]{MR2161313} and \cite[Lemma~7.28]{MR2161313}).
In particular, it can be shown that measures of the form $\mu_{(\PP,m)}$ are ergodic,
and so the following corollary holds.

\begin{corollary}
	\label{ergodic-cor}
Let
$\PP$ be 
a samplable
Borel $L$-structure, and suppose $m$ is a continuous nondegenerate probability measure on $\Reals$.  If 
$\mu_{(\PP,m)}$ is concentrated on 
some countable 
union
of isomorphism classes in $\Model$,
then in fact $\mu_{(\PP,m)}$ is concentrated on a single isomorphism class.
\end{corollary}
In other words, 
for any samplable Borel $L$-structure $\PP$, the measure $\mu_{(\PP,m)}$, as defined in
\S\ref{BorelLStructuresSection},
is concentrated on either one or uncountably many isomorphism
classes.
For 
an
investigation of some circumstances with
continuum-many isomorphism classes, see \cite{AFNP}.

\subsection{Continuum-sized models of Scott sentences}\ \\
\label{scottcontinuum}
\indent
We conclude with a somewhat unexpected corollary of the machinery that we
have developed.  A countable structure $\cM$ is said to be \emph{absolutely
characterizable} when its Scott sentence $\sigma_{\cM}$ has no uncountable
models, and hence characterizes $\cM$ up to isomorphism among all
structures, not just among countable structures (see
\cite[\S1.3]{MR2062240}).  Our results imply that there is no invariant measure
concentrated on such a structure.

\begin{corollary}
Let $L$ be a countable language and let $\cM\in\Model$.
Suppose that $\sigma_\cM$, the Scott sentence of $\cM$,
has no continuum-sized models. Then
there is no invariant measure on $\Model$ 
that is concentrated on
the isomorphism class of $\cM$.
\end{corollary}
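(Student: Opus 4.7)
The plan is to prove the contrapositive: if some invariant measure on $\Model$ is concentrated on $\cM$, then $\sigma_\cM$ admits a model of cardinality $2^{\aleph_0}$. The key observation is that the machinery of Section~\ref{existence} (extended to arbitrary languages in Section~\ref{functions}) does not merely produce a sampling distribution concentrated on $\cM$; the continuum-sized ``Borel'' structure from which we sample is itself a model of $\sigma_\cM$.

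First I would handle the relational case. Suppose an invariant measure is concentrated on $\cM$. By Theorem~\ref{maintheorem}, $\cM$ has trivial definable closure. Let $A$ be a countable fragment of $\Lwow(L)$ containing $\sigma_\cM$, and let $T_A$ be the pithy $\Pi_2$ expansion of $\{\sigma_\cM\}$ to $A$, as in Definition~\ref{pithydef}. By Lemma~\ref{alephnought}, $T_A$ has duplication of quantifier-free types, so Theorem~\ref{ModelsWitnessingT} produces a Borel $L_A$-structure $\PP_A$, with underlying set $\Reals$, that strongly witnesses $T_A$. By Lemma~\ref{SubmodelsSatisfyingT}, $\PP_A \models T_A$. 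Unwinding the definitional biconditionals in $W_A$ and $Z_A$ (Lemmas~\ref{LA-TA} and~\ref{pithy-TA}) together with the pithy $\Pi_2$ axioms $(\forall \xx)E^0_{\psi(\xx\yy)}(\xx)$ in $T_A$ shows that any model of $T_A$ satisfies $\sigma_\cM$; hence the $L$-reduct $\PP$ of $\PP_A$ is a model of $\sigma_\cM$ of cardinality $2^{\aleph_0}$, contradicting the hypothesis.

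For an arbitrary countable language $L$, I would pass through the translation of Section~\ref{functions}. Trivial definable closure of $\cM$ is preserved by Corollary~\ref{secondfun}, so the relational argument above, applied to the theory $(T_A)^\ast \cup D$, yields a continuum-sized Borel $L^\ast$-structure on $\Reals$ modelling $(\sigma_\cM)^\ast \wedge \bigwedge D$. The axioms of $D$ ensure this structure corresponds, via the natural extension of the bijection $\alpha$ from $\Model$ to $\cD$ (applied now to structures with underlying set $\Reals$ instead of $\Naturals$), to a genuine $L$-structure on $\Reals$ satisfying $\sigma_\cM$.

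I expect the only subtle point to be verifying that $T_A$ entails $\sigma_\cM$ in \emph{all} its models rather than only in countable ones, since Lemma~\ref{pithyconseq} is stated for countable structures. However, the entailment needed here is purely syntactic: it follows by composing the biconditional axioms in $W_A$ and $Z_A$ with the unary axiom $(\forall w) E^0_{\,\cdot\,}(w) \in T_A$ corresponding to $\sigma_\cM$, and this composition holds in any $L_A$-structure whatsoever. Everything else is a direct appeal to results already established, so once this point is noted the contradiction is immediate.
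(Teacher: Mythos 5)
Your proposal is correct and follows essentially the same route as the paper: invoke trivial definable closure (via Theorem~\ref{Conditions}/\ref{maintheorem}), pass to the pithy $\Pi_2$ expansion $T_A$, apply Theorem~\ref{ModelsWitnessingT} and Lemma~\ref{SubmodelsSatisfyingT} to obtain a continuum-sized Borel $L_A$-structure modelling $T_A$, and conclude a continuum-sized model of $\sigma_\cM$. The one place you go beyond the paper's terse proof is in explicitly justifying that $T_A \models \sigma_\cM$ holds for models of arbitrary cardinality (since Lemma~\ref{pithyconseq} is stated only for countable models), and your syntactic argument via the universal biconditionals of $W_A$ and $Z_A$ is exactly the right way to close that gap.
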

\begin{proof}
Suppose there exists an invariant measure concentrated on $\cM$.
Then by Theorem~\ref{KMConditions}, $\cM$ has trivial definable
closure.
Let $\Mbar$ be the canonical structure of $\cM$ and $L_{\Mbar}$ be the
canonical language.  By Lemmas~\ref{canon-is-interdefinable} and
\ref{trivial-dcl-interdefinability}, $\Mbar$ also has trivial definable
closure.

By Proposition~\ref{pithypitwotheory-canonical}, 
there is a pithy $\Pi_2$ $\Lwow(L_\Mbar)$-theory $T_\Mbar$
all of whose countable models are isomorphic to $\Mbar$.
Hence
by Theorem~\ref{ModelsWitnessingT}
there
exists a (continuum-sized) Borel
$L_\Mbar$-structure $\QQ$ strongly witnessing $T_\Mbar$.
But then $\QQ\models T_\Mbar$, by
Lemma~\ref{SubmodelsSatisfyingT}.
By Lemma~\ref{interdefinition-lemma}, using the interdefinition
given in Lemma~\ref{canon-is-interdefinable} between $\cM$ and $\Mbar$,
there is a (continuum-sized) $L$-structure
interdefinable with $\QQ$, which has the same $\Lwow(L)$-theory as
$\cM$, and which hence satisfies
$\sigma_\cM$.
\end{proof}

Finally, this shows
that if the Scott sentence $\sigma_{\cM}$ of
a countable infinite structure $\cM$
has no continuum-sized models
(e.g., if $\cM$ is absolutely characterizable), then $\cM$ must have
nontrivial definable closure.

\vspace{10pt}

\section*{Acknowledgements}

We would like to thank Gregory Cherlin for pointing us to \cite{MR2724668}, and Lionel Levine, G{\'a}bor Lippner, Ben Rossman, Gerald Sacks, and Peter Winkler for helpful initial discussions.  We are grateful to Anatoly Vershik and Jaroslav Ne\v{s}et\v{r}il for generously sharing their insights over several conversations.  Thanks to John Truss for inviting C.\,F.\ and R.\,P.\ to the Workshop on Homogeneous Structures at the University of Leeds in 2011, and to Jaroslav Ne\v{s}et\v{r}il for inviting C.\,F.\ to the Workshop on Graph and Hypergraph Limits at the American Institute of Mathematics in 2011; both provided stimulating environments in which to think about this work.  We also thank Dan Roy for helping to improve the proofs of certain results, and Willem Fouch{\'e}, 
Alex Kruckman,
M.~Malliaris, Peter Orbanz, Max Weiss, and Carol Wood for comments on a draft.
We thank Alexander Kechris for sharing with us the argument of
Kechris--Marks incorporated into Theorem~\ref{KMConditions} and the
observation 
Corollary~\ref{KMCor}.
Finally, we would like to thank the anonymous referees for several
comments which have significantly improved the presentation of the paper.

C.\,F.\ was partially supported by NSF grants DMS-0901020 and DMS-0800198
and ARO grant W911NF-13-1-0212, and this research was partially done
while he was a visiting fellow at the Isaac Newton Institute for the
Mathematical Sciences in the program \emph{Semantics and Syntax}.  His work
on this publication was also made possible through the support of 
grants
from the John Templeton Foundation and Google. The opinions expressed in this publication are those of the authors and do not necessarily reflect the views of the John Templeton Foundation.


\bibliographystyle{amsnomr}

\begin{thebibliography}{HKM{\etalchar{+}}13}

\bibitem[Ack16]{AckermanSemigroups}
N.~L. Ackerman, \emph{A characterization of quasitrivial $n$-semigroups},
  Algebra Universalis, to appear (2016).

\bibitem[AFNP16]{AFNP}
N.~Ackerman, C.~Freer, J.~{Ne{\v{s}}et{\v{r}}il}, and R.~Patel, \emph{Invariant
  measures via inverse limits of finite structures}, Eur. J. Combin.
  \textbf{52} (2016), 248--289.

\bibitem[Ald81]{MR637937}
D.~J. Aldous, \emph{Representations for partially exchangeable arrays of random
  variables}, J. Multivariate Anal. \textbf{11} (1981), no.~4, 581--598.

\bibitem[Aus08a]{MR2426176}
T.~Austin, \emph{On exchangeable random variables and the statistics of large
  graphs and hypergraphs}, Probab. Surv. \textbf{5} (2008), 80--145.

\bibitem[Aus08b]{2008arXiv0801.1538A}
\bysame, \emph{{Razborov flag algebras as algebras of measurable functions}},
  ArXiv e-print 0801.1538 (2008).

\bibitem[AZ86]{MR831437}
G.~Ahlbrandt and M.~Ziegler, \emph{Quasi-finitely axiomatizable totally
  categorical theories}, Ann. Pure Appl. Logic \textbf{30} (1986), no.~1,
  63--82.

\bibitem[Bar75]{MR0424560}
J.~Barwise, \emph{Admissible sets and structures: An approach to definability
  theory}, Perspectives in Mathematical Logic, Springer-Verlag, Berlin, 1975.

\bibitem[BK96]{MR1425877}
H.~Becker and A.~S. Kechris, \emph{The descriptive set theory of {P}olish group
  actions}, London Math. Soc. Lecture Note Ser., vol. 232, Cambridge Univ.
  Press, Cambridge, 1996.

\bibitem[Cam03]{MR2028272}
P.~J. Cameron, \emph{Homogeneous permutations}, Electron. J. Combin. \textbf{9}
  (2002/03), no.~2, Research paper 2.

\bibitem[Cam90]{MR1066691}
\bysame, \emph{Oligomorphic permutation groups}, London Math. Soc. Lecture Note
  Ser., vol. 152, Cambridge Univ. Press, Cambridge, 1990.

\bibitem[Che87]{MR895639}
G.~Cherlin, \emph{Homogeneous directed graphs. {T}he imprimitive case}, Logic
  Colloquium '85 ({O}rsay, 1985), Stud. Logic Found. Math., vol. 122,
  North-Holland, Amsterdam, 1987, pp.~67--88.

\bibitem[Che98]{MR1434988}
\bysame, \emph{The classification of countable homogeneous directed graphs and
  countable homogeneous {$n$}-tournaments}, vol. 131, Mem. Amer. Math. Soc.,
  no. 621, Amer. Math. Soc., Providence, RI, 1998.

\bibitem[Che11]{cherlin}
\bysame, \emph{Two problems on homogeneous structures, revisited},
  Model-theoretic Methods in Finite Combinatorics, Contemp. Math., vol. 558,
  Amer. Math. Soc., Providence, RI, 2011, pp.~319--415.

\bibitem[CSS99]{MR1683298}
G.~Cherlin, S.~Shelah, and N.~Shi, \emph{Universal graphs with forbidden
  subgraphs and algebraic closure}, Adv. in Appl. Math. \textbf{22} (1999),
  no.~4, 454--491.

\bibitem[CT07]{MR2342786}
G.~Cherlin and L.~Tallgren, \emph{Universal graphs with a forbidden near-path
  or 2-bouquet}, J. Graph Theory \textbf{56} (2007), no.~1, 41--63.

\bibitem[CV06]{MR2258622}
P.~J. Cameron and A.~M. Vershik, \emph{Some isometry groups of the {U}rysohn
  space}, Ann. Pure Appl. Logic \textbf{143} (2006), no.~1-3, 70--78.

\bibitem[DJ08]{MR2463439}
P.~Diaconis and S.~Janson, \emph{Graph limits and exchangeable random graphs},
  Rend. Mat. Appl. (7) \textbf{28} (2008), no.~1, 33--61.

\bibitem[DK03]{MR1979531}
M.~Droste and D.~Kuske, \emph{On random relational structures}, J. Combin.
  Theory Ser. A \textbf{102} (2003), no.~2, 241--254.

\bibitem[DM12]{MR2891379}
I.~Dolinka and D.~Ma{\v{s}}ulovi{\'c}, \emph{Properties of the automorphism
  group and a probabilistic construction of a class of countable labeled
  structures}, J. Combin. Theory Ser. A \textbf{119} (2012), no.~5, 1014--1030.

\bibitem[EKR76]{MR0463020}
P.~Erd{\H{o}}s, D.~J. Kleitman, and B.~L. Rothschild, \emph{Asymptotic
  enumeration of {$K_{n}$}-free graphs}, Colloquio {I}nternazionale sulle
  {T}eorie {C}ombinatorie ({R}ome, 1973), {T}omo {II}, Atti dei Convegni
  Lincei, vol.~17, Accad. Naz. Lincei, Rome, 1976, pp.~19--27.

\bibitem[ER59]{MR0120167}
P.~Erd{\H{o}}s and A.~R{\'e}nyi, \emph{On random graphs. {I}}, Publ. Math.
  Debrecen \textbf{6} (1959), 290--297.

\bibitem[Gai64]{MR0175755}
H.~Gaifman, \emph{Concerning measures in first order calculi}, Israel J. Math.
  \textbf{2} (1964), 1--18.

\bibitem[Gao07]{MR2289895}
S.~Gao, \emph{Complexity ranks of countable models}, Notre Dame J. Formal Logic
  \textbf{48} (2007), no.~1, 33--48.

\bibitem[Gil59]{MR0108839}
E.~N. Gilbert, \emph{Random graphs}, Ann. Math. Statist. \textbf{30} (1959),
  1141--1144.

\bibitem[Hen71]{MR0304242}
C.~W. Henson, \emph{A family of countable homogeneous graphs}, Pacific J. Math.
  \textbf{38} (1971), 69--83.

\bibitem[Hen72]{MR0321727}
\bysame, \emph{Countable homogeneous relational structures and
  {$\aleph_{0}$}-categorical theories}, J. Symbolic Logic \textbf{37} (1972),
  494--500.

\bibitem[HKM{\etalchar{+}}13]{MR2995721}
C.~Hoppen, Y.~Kohayakawa, C.~G. Moreira, B.~R{\'a}th, and R.~Menezes~Sampaio,
  \emph{Limits of permutation sequences}, J. Combin. Theory Ser. B \textbf{103}
  (2013), no.~1, 93--113.

\bibitem[HN14]{2009arXiv0909.4939H}
J.~{Hubi{\v{c}}ka} and J.~{Ne{\v{s}}et{\v{r}}il}, \emph{Homomorphism and
  embedding universal structures for restricted classes}, ArXiv e-print
  0909.4939 (2014).

\bibitem[Hod93]{MR1221741}
W.~Hodges, \emph{Model theory}, Encyclopedia of Mathematics and its
  Applications, vol.~42, Cambridge University Press, Cambridge, 1993.

\bibitem[Hoo79]{Hoover79}
D.~N. Hoover, \emph{Relations on probability spaces and arrays of random
  variables}, Preprint, Institute for Advanced Study, Princeton, NJ, 1979.

\bibitem[Jan11]{DBLP:journals/combinatorica/Janson11}
S.~Janson, \emph{Poset limits and exchangeable random posets}, Combinatorica
  \textbf{31} (2011), no.~5, 529--563.

\bibitem[Jan13]{MR3043217}
\bysame, \emph{Graphons, cut norm and distance, couplings and rearrangements},
  New York J. Math. NYJM Monographs, vol.~4, State Univ. of New York, Univ. at
  Albany, Albany, NY, 2013.

\bibitem[Kal92]{MR1182678}
O.~Kallenberg, \emph{Symmetries on random arrays and set-indexed processes}, J.
  Theoret. Probab. \textbf{5} (1992), no.~4, 727--765.

\bibitem[Kal99]{MR1702867}
\bysame, \emph{Multivariate sampling and the estimation problem for
  exchangeable arrays}, J. Theoret. Probab. \textbf{12} (1999), no.~3,
  859--883.

\bibitem[Kal02]{MR1876169}
\bysame, \emph{Foundations of modern probability}, 2nd ed., Probability and its
  Applications, Springer-Verlag, New York, 2002.

\bibitem[Kal05]{MR2161313}
\bysame, \emph{Probabilistic symmetries and invariance principles}, Probability
  and its Applications, Springer-Verlag, New York, 2005.

\bibitem[Kec95]{MR1321597}
A.~S. Kechris, \emph{Classical descriptive set theory}, Graduate Texts in
  Math., vol. 156, Springer-Verlag, New York, 1995.

\bibitem[Kei71]{MR0344115}
H.~J. Keisler, \emph{Model theory for infinitary logic: {L}ogic with countable
  conjunctions and finite quantifiers}, Studies in Logic and the Foundations of
  Mathematics, vol.~62, North-Holland Publishing Co., Amsterdam, 1971.

\bibitem[KK04]{MR2062240}
H.~J. Keisler and J.~F. Knight, \emph{Barwise: infinitary logic and admissible
  sets}, Bull. Symbolic Logic \textbf{10} (2004), no.~1, 4--36.

\bibitem[Kom99]{MR1675931}
P.~Komj{\'a}th, \emph{Some remarks on universal graphs}, Discrete Math.
  \textbf{199} (1999), no.~1-3, 259--265.

\bibitem[KPR87]{MR902790}
P.~G. Kolaitis, H.~J. Pr{\"o}mel, and B.~L. Rothschild, \emph{{$K_{l+1}$}-free
  graphs: asymptotic structure and a {$0$}--{$1$} law}, Trans. Amer. Math. Soc.
  \textbf{303} (1987), no.~2, 637--671.

\bibitem[Kra69]{MR0275482}
P.~H. Krauss, \emph{Representation of symmetric probability models}, J.
  Symbolic Logic \textbf{34} (1969), 183--193.

\bibitem[Lac84]{MR743728}
A.~H. Lachlan, \emph{Countable homogeneous tournaments}, Trans. Amer. Math.
  Soc. \textbf{284} (1984), no.~2, 431--461.

\bibitem[Lov12]{MR3012035}
L.~Lov{\'a}sz, \emph{Large networks and graph limits}, Amer. Math. Soc.
  Colloquium Publications, vol.~60, Amer. Math. Soc., Providence, RI, 2012.

\bibitem[LS06]{MR2274085}
L.~Lov{\'a}sz and B.~Szegedy, \emph{Limits of dense graph sequences}, J.
  Combin. Theory Ser. B \textbf{96} (2006), no.~6, 933--957.

\bibitem[LS10]{MR2815610}
\bysame, \emph{Regularity partitions and the topology of graphons}, An
  Irregular Mind, Bolyai Soc. Math. Stud., vol.~21, J\'anos Bolyai Math. Soc.,
  Budapest, 2010, pp.~415--446.

\bibitem[LS12]{JGT:JGT20611}
\bysame, \emph{Random graphons and a weak {P}ositivstellensatz for graphs}, J.
  Graph Theory \textbf{70} (2012), no.~2, 214--225.

\bibitem[LW80]{MR583847}
A.~H. Lachlan and R.~E. Woodrow, \emph{Countable ultrahomogeneous undirected
  graphs}, Trans. Amer. Math. Soc. \textbf{262} (1980), no.~1, 51--94.

\bibitem[Mac11]{MR2800979}
D.~Macpherson, \emph{A survey of homogeneous structures}, Discrete Math.
  \textbf{311} (2011), no.~15, 1599--1634.

\bibitem[Mar02]{MR1924282}
D.~Marker, \emph{Model theory}, Graduate Texts in Math., vol. 217,
  Springer-Verlag, New York, 2002.

\bibitem[MN13]{MR3205056}
A.~Montalb{\'a}n and A.~Nies, \emph{Borel structures: a brief survey},
  Effective Mathematics of the Uncountable, Lect. Notes Log., vol.~41, Assoc.
  Symbol. Logic, La Jolla, CA, 2013, pp.~124--134.

\bibitem[PV10]{MR2724668}
F.~Petrov and A.~Vershik, \emph{Uncountable graphs and invariant measures on
  the set of universal countable graphs}, Random Structures \& Algorithms
  \textbf{37} (2010), no.~3, 389--406.

\bibitem[Rad64]{MR0172268}
R.~Rado, \emph{Universal graphs and universal functions}, Acta Arith.
  \textbf{9} (1964), 331--340.

\bibitem[Raz07]{MR2371204}
A.~A. Razborov, \emph{Flag algebras}, J. Symbolic Logic \textbf{72} (2007),
  no.~4, 1239--1282.

\bibitem[Sch79]{MR544855}
J.~H. Schmerl, \emph{Countable homogeneous partially ordered sets}, Algebra
  Universalis \textbf{9} (1979), no.~3, 317--321.

\bibitem[SK66]{ScottKrauss}
D.~Scott and P.~Krauss, \emph{Assigning probabilities to logical formulas},
  Aspects of inductive logic (J.~Hintikka and P.~Suppes, eds.), Studies in
  Logic and the Foundations of Mathematics, North Holland, Amsterdam, 1966,
  pp.~219--259.

\bibitem[Ste85]{MR819547}
C.~I. Steinhorn, \emph{Borel structures and measure and category logics},
  Model-theoretic Logics, Perspect. Math. Logic, Springer, New York, 1985,
  pp.~579--596.

\bibitem[Usv08]{MR2435152}
A.~Usvyatsov, \emph{Generic separable metric structures}, Topology Appl.
  \textbf{155} (2008), no.~14, 1607--1617.

\bibitem[Ver02a]{MR1922015}
A.~M. Vershik, \emph{Classification of measurable functions of several
  arguments, and invariantly distributed random matrices}, Funktsional. Anal. i
  Prilozhen. \textbf{36} (2002), no.~2, 12--27, 95.

\bibitem[Ver02b]{MR2006015}
\bysame, \emph{A random metric space is a {U}rysohn space}, Dokl. Akad. Nauk
  \textbf{387} (2002), no.~6, 733--736.

\bibitem[Ver04]{MR2086637}
\bysame, \emph{Random metric spaces and universality}, Uspekhi Mat. Nauk
  \textbf{59} (2004), no.~2(356), 65--104.

\bibitem[Wil91]{MR1155402}
D.~Williams, \emph{Probability with martingales}, Cambridge Univ. Press,
  Cambridge, 1991.

\end{thebibliography}
\begin{small}

\newcommand{\etalchar}[1]{$^{#1}$}
\def\cprime{$'$} \def\polhk#1{\setbox0=\hbox{#1}{\ooalign{\hidewidth
  \lower1.5ex\hbox{`}\hidewidth\crcr\unhbox0}}}
  \def\polhk#1{\setbox0=\hbox{#1}{\ooalign{\hidewidth
  \lower1.5ex\hbox{`}\hidewidth\crcr\unhbox0}}} \def\cprime{$'$}
  \def\cprime{$'$} \def\cprime{$'$} \def\cprime{$'$} \def\cprime{$'$}
  \def\cprime{$'$} \def\cprime{$'$} \def\cprime{$'$} \def\cprime{$'$}
\providecommand{\bysame}{\leavevmode\hbox to3em{\hrulefill}\thinspace}
\providecommand{\MR}{\relax\ifhmode\unskip\space\fi MR }
\providecommand{\MRhref}[2]{%
  \href{http://www.ams.org/mathscinet-getitem?mr=#1}{#2}
}
\providecommand{\href}[2]{#2}

\end{small}


\end{document}